\documentclass{amsart}

\newcommand{\R}{{\mathbb R}}
\newcommand{\N}{{\mathbb N}}
\newcommand{\Z}{{\mathbb Z}}
\newcommand{\on}{\operatorname}
\newcommand{\eps}{\varepsilon}
\newtheorem{definition}{Definition}[section]
\newtheorem{bem}[definition]{Remark}

\newtheorem{theorem}[definition]{Theorem}
\newtheorem{proposition}[definition]{Proposition}
\newtheorem{lemma}[definition]{Lemma}
\newtheorem{corollary}[definition]{Corollary}
\makeatletter

\parskip1ex


\begin{document}
\title{Geometric flows with rough initial data}
\author{Herbert Koch}
\address[H.~Koch]{Mathematisches Institut\\Universit{\"a}t Bonn\\Endenicher Allee 60\\53115 Bonn\\Germany}
\email{koch@math.uni-bonn.de}
\author{Tobias Lamm}
\address[T.~Lamm]{Department of Mathematics\\University of British Columbia\\1984 Mathematics Road\\Vancouver, BC V6T 1Z2\\Canada}
\email{tlamm@math.ubc.ca}
\thanks{The second author is partially supported by a PIMS Postdoctoral Fellowship.}
\date{\today}
\begin{abstract}
We show the existence of a global unique and analytic solution for the
  mean curvature flow, the surface diffusion flow and the Willmore flow of entire graphs for Lipschitz initial data with small Lipschitz norm. We also show the existence of a global unique and analytic solution to the Ricci-DeTurck flow on euclidean space for bounded
  initial metrics which are close to the euclidean metric in
  $L^\infty$ and to the harmonic map flow for initial maps whose image
  is contained in a small geodesic ball.
\end{abstract}
\maketitle
\section{Introduction}

In this paper we prove the existence of solutions of geometric flows with non-smooth initial data. More precisely, we consider the graphical Willmore, surface diffusion and mean curvature flow, the Ricci-DeTurck flow on $\R^n$ and the harmonic map flow for maps from $\R^n$ into a compact target manifold.

The initial data we are interested in are Lipschitz functions for the mean curvature, surface diffusion and Willmore flow, and $L^\infty$ metrics (respectively maps) for the Ricci-DeTurck and harmonic map flow. Here and in the rest of the paper we say that a function $f$ is Lipschitz if it belongs to the homogeneous Lipschitz space $C^{0,1}(\R^n)$ with norm $||f||_{C^{0,1}(\R^n)}=||\nabla f||_{L^\infty(\R^n)}$. We construct the solutions of the flows via a fixed point argument and therefore we require the initial data to be small in the corresponding spaces.
 
Crucial in our construction are scale invariant norms based on space-time cylinders, similar to the Carleson weight characterization of $BMO$ (see \cite{stein93, koch01}). This point of view has been introduced by the first author and Tataru \cite{koch01} in the context of the Navier-Stokes equations. Here we approach quasilinear equations and we obtain new and possibly optimal results in terms of the regularity of the initial data and the regularity of the solution. Moreover our method to construct the solutions allows a uniform and efficient treatment of the five geometric evolution equations.
 
In the above mentioned paper \cite{koch01} a fixed point argument was used in order to show the existence of a unique global solution of the Navier-Stokes equations for any initial data
which is divergence free and small in $BMO^{-1}$ (the space of distributions which are the divergence of vector fields with $BMO$ components). By localizing their construction the authors were also able to show the existence of a unique local solution of the Navier-Stokes equations for any initial data which is divergence free and in $VMO^{-1}$.

In the case of the harmonic map flow we show how a similar local construction can be used to obtain the existence of a local unique solution for initial maps which are small $L^\infty$-perturbations of uniformly continuous maps. 

Using an idea introduced by Angenent \cite{angenent90}, \cite{angenent90a} we obtain in all cases analyticity of the solution as a byproduct of the fixed point argument. 

It is likely that related local constructions can be used to obtain unique local and analytic solutions of the mean curvature, surface diffusion and Willmore flow of entire graphs for $C^1$ initial surfaces and even for small Lipschitz perturbations of such surfaces. This remark may be of interest for numerical approximations by triangulated surfaces. 

In the following we give a brief outline of the paper. 

In section $2$ we recall some basic properties of the heat kernel and the biharmonic heat kernel and we study solutions of the homogeneous linear equations with rough initial data.

In section $3$ we show the existence of a global unique and analytic solution of the Willmore and surface diffusion flow of entire graphs for Lipschitz initial data with small Lipschitz norm. Moreover we show the existence of global unique and analytic self-similar solutions for self-similar Lipschitz initial data having small Lipschitz norm.

A global unique and analytic solution to the Ricci-DeTurck flow on $\R^n$ for $L^\infty$-initial metrics which are $L^\infty$ close to the euclidean metric is constructed in section $4$. This yields a slight improvement of a recent existence result of Schn\"urer, Schulze \& Simon \cite{schnuerer08}.

In section $5$ we show the existence of a global unique and analytic solution of the mean curvature flow of entire graphs for Lipschitz initial data with small Lipschitz norm. We emphasize that this construction includes the case of higher codimensions.  

In section $6$ we construct a local unique solution of the harmonic map for every initial data which is a $L^\infty$-perturbation of a uniformly continous map. As a Corollary we get the existence of a global solution for the harmonic map flow for every initial map whose image is contained in a small geodesic ball.

Finally, in the appendix, we use the method of the stationary phase to derive some standard estimates for the biharmonic heat kernel.

\section{Preliminaries}
\setcounter{equation}{0} 
In this section we recall some estimates for the heat kernel and the biharmonic heat kernel and we prove estimates for solutions of the corresponding homogeneous initial value problems with rough initial data.
\subsection{Heat kernel}
The heat kernel $\Phi(x,t)=(4\pi t)^{-\frac{n}{2}}e^{-{\frac{|x|^2}{4t}}}$ is the fundamental solution of the heat equation
\begin{align*}
(\partial_t-\Delta)\Phi(x,t)=0\ \ \ \text{on}\ \ \R^n\times (0,\infty).
\end{align*}
We have the following estimates for the heat kernel and its derivatives. 
\begin{lemma}\label{kernela}
We have for every $k,l\in \N_0$ and $t>0$, $x\in \R^n$
\begin{align}
 |\partial_t^l \nabla^k \Phi(x,t)| &\le c (t^{\frac{1}{2}}+|x|)^{-n-k-2l}\ \ \ \text{and} \label{kernel2b}\\
\| \partial_t^l \nabla^k \Phi(\cdot,t) \|_{L^1(\R^n)} &\le c t^{-l-\frac{k}{2}} . \label{kernel2c}
\end{align}
Moreover, for any $(x,t) \in \R^n \times (0,1)\backslash \big(B_1(0)\times (0,\frac{1}{4})\big)$, there exist constants $c,c_1>0$ such that
\begin{align}
|\Phi(x,t)|+|\nabla \Phi|(x,t)+|\nabla^2 \Phi|(x,t)\le ce^{-c_1|x|}. \label{phi}
\end{align}
\end{lemma}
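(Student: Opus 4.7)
The plan is to exploit self-similarity and the fact that $\Phi$ solves the heat equation. Since $\partial_t\Phi=\Delta\Phi$, one has the identity $\partial_t^l\nabla^k\Phi=\nabla^k\Delta^l\Phi$, and direct differentiation yields
\[
\partial_t^l\nabla^k\Phi(x,t)\;=\;t^{-n/2-k/2-l}\,P_{k,l}\!\bigl(x/\sqrt{t}\bigr)\,e^{-|x|^2/(4t)},
\]
where $P_{k,l}$ is a Hermite-type polynomial of degree $k+2l$. All three estimates will follow from this explicit form combined with the fact that a Gaussian absorbs any polynomial factor.

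For \eqref{kernel2b}, I would use that $|P_{k,l}(y)e^{-|y|^2/4}|\le c(1+|y|)^{-n-k-2l}$ uniformly on $\R^n$. Substituting $y=x/\sqrt{t}$ and collecting the powers of $\sqrt{t}$,
\[
t^{-n/2-k/2-l}\bigl(1+|x|/\sqrt{t}\bigr)^{-(n+k+2l)}=(\sqrt{t}+|x|)^{-(n+k+2l)},
\]
which is the claimed pointwise estimate. For \eqref{kernel2c}, the same change of variables $y=x/\sqrt{t}$ in the $L^1$-integral contributes a Jacobian factor $t^{n/2}$ that cancels the $t^{-n/2}$ in the prefactor, leaving $t^{-k/2-l}\int_{\R^n}|P_{k,l}(y)|e^{-|y|^2/4}\,dy$; the remaining integral is finite.

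For \eqref{phi}, I would split the admissible region into the two subregimes $\{t\ge 1/4\}$ and $\{|x|\ge 1\}$, whose union covers $\R^n\times(0,1)\setminus(B_1(0)\times(0,1/4))$. If $t\ge 1/4$, then $t^{-n/2-k/2}$ is bounded and $e^{-|x|^2/(4t)}\le e^{-|x|^2/4}$; separately comparing $|x|^2$ with $|x|$ on $\{|x|\le 1\}$ and $\{|x|\ge 1\}$ produces the bound $ce^{-c_1|x|}$. If instead $|x|\ge 1$, I would split $e^{-|x|^2/(4t)}=e^{-|x|^2/(8t)}\cdot e^{-|x|^2/(8t)}$; the elementary inequality $s^\alpha e^{-s/16}\le C_\alpha$ with $s=1/t\ge 1$ shows that the first factor absorbs $t^{-n/2-k/2}$ on $\{|x|\ge 1,\,t\le 1\}$, while $|x|^2\ge |x|$ together with $t\le 1$ gives $e^{-|x|^2/(8t)}\le e^{-|x|/8}$ for the second factor.

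None of these steps presents a genuine obstacle; they are standard consequences of the Gaussian profile and the parabolic scaling of $\Phi$. The only point requiring mild care is the organization of \eqref{phi}: the precise choice of the excluded cylinder $B_1(0)\times(0,1/4)$ is exactly what ensures the two regimes cover the admissible region and yield a uniform exponential bound in $|x|$.
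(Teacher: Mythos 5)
Your argument is correct: the paper states Lemma \ref{kernela} without proof (as a standard heat-kernel estimate), and your self-similar representation $\partial_t^l\nabla^k\Phi=t^{-n/2-k/2-l}P_{k,l}(x/\sqrt{t})e^{-|x|^2/(4t)}$ together with the Gaussian-absorbs-polynomials principle is exactly the standard route the authors implicitly rely on, with the scaling bookkeeping for \eqref{kernel2b}, \eqref{kernel2c} and the two-regime covering for \eqref{phi} all handled correctly. The only cosmetic point is to carry the polynomial factor $P_{k,0}(x/\sqrt{t})$ explicitly through both regimes of \eqref{phi}, which your opening remark already covers.
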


We note that solutions of the heat equation which grow slower than $e^{|x|^2}$ at infinity are unique (see for example \cite{evans}). In the following, whenever we speak of a solution of the heat equation, we mean a solution satisfying this growth condition. 

As a consequence of the estimates for the heat kernel we get the following result for solutions of the homogeneous heat equation. 
\begin{lemma}\label{homog2nd}
Let $u_0\in L^\infty(\R^n)$ and let $u:\R^n\times \R^+_0\rightarrow \R$ be a solution of the homogeneous linear equation 
\[ u_t - \Delta u = 0 , \qquad u(\cdot,0) = u_0. \]
Then we have 
\begin{align} 
|| u ||_{L^\infty(\R^n\times \R^+)} &+\sup_{t>0}t^{\frac{1}{2}}||\nabla u(t)||_{L^\infty(\R^n)}+  \sup_{x\in \R^n} \sup_{R>0} \Big(R^{-\frac{n}{2}} || \nabla u||_{L^2(B_R(x)\times (0,R^2))}\nonumber \\
&+R^{\frac{2}{n+4}}||\nabla u||_{L^{n+4}(B_R(x)\times (\frac{R^2}{2},R^2))} \Big)\nonumber \\
&\le c || u_0 ||_{L^\infty(\R^n)}.\label{homog2nda} 
\end{align} 
\end{lemma}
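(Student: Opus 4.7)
The plan is to use the representation $u(x,t) = (\Phi(\cdot,t) \ast u_0)(x)$ together with the kernel bounds from Lemma~\ref{kernela} for the first, second, and fourth terms in (\ref{homog2nda}), and to exploit a Caccioppoli-type energy identity for the third term. By Young's convolution inequality and the fact that $\|\Phi(\cdot,t)\|_{L^1(\R^n)} = 1$ (the case $k=l=0$ of (\ref{kernel2c})), one immediately gets $\|u\|_{L^\infty(\R^n \times \R^+)} \le \|u_0\|_{L^\infty(\R^n)}$. Differentiating under the integral sign and applying (\ref{kernel2c}) with $k=1$, $l=0$ yields
\[
\|\nabla u(\cdot,t)\|_{L^\infty(\R^n)} \le \|\nabla \Phi(\cdot,t)\|_{L^1(\R^n)} \|u_0\|_{L^\infty(\R^n)} \le c\, t^{-1/2} \|u_0\|_{L^\infty(\R^n)},
\]
which is the second term in the estimate.

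Once this pointwise gradient bound is available, the $L^{n+4}$ piece is immediate. For $t \in (R^2/2, R^2)$ the bound above gives $|\nabla u(x,t)| \le c R^{-1} \|u_0\|_{L^\infty(\R^n)}$, so integrating over the cylinder $B_R(x) \times (R^2/2, R^2)$, whose measure is comparable to $R^{n+2}$, one finds
\[
\|\nabla u\|_{L^{n+4}(B_R(x) \times (R^2/2, R^2))} \le c R^{-1} R^{(n+2)/(n+4)} \|u_0\|_{L^\infty(\R^n)} = c R^{-2/(n+4)} \|u_0\|_{L^\infty(\R^n)},
\]
which gives the fourth term after multiplying through by $R^{2/(n+4)}$.

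The Carleson-type $L^2$ bound on $\nabla u$ over $B_R(x_0) \times (0,R^2)$ is the main technical point. Indeed the pointwise estimate $|\nabla u|^2 \le c t^{-1}\|u_0\|_{L^\infty(\R^n)}^2$ is not integrable in $t$ down to $0$, so direct use of the kernel is insufficient and one must exploit the cancellation in the equation. I would run a standard Caccioppoli argument: fix $x_0 \in \R^n$ and $R > 0$, choose a cut-off $\eta \in C_c^\infty(B_{2R}(x_0))$ with $\eta \equiv 1$ on $B_R(x_0)$ and $|\nabla \eta| \le c/R$, test the equation $u_t - \Delta u = 0$ against $\eta^2 u$, and integrate by parts on $\R^n \times (0,R^2)$ to obtain
\begin{align*}
\int_{\R^n} \frac{u^2(\cdot,R^2)}{2}\eta^2\, dx + \int_0^{R^2}\!\int_{\R^n} |\nabla u|^2 \eta^2\, dx\, dt
&= \int_{\R^n} \frac{u_0^2}{2}\eta^2\, dx \\
&\quad - 2\int_0^{R^2}\!\int_{\R^n} u\eta\, \nabla u \cdot \nabla\eta\, dx\, dt.
\end{align*}
Absorbing the last term into the left-hand side via Young's inequality and using the already established bound $\|u\|_{L^\infty} \le \|u_0\|_{L^\infty(\R^n)}$ together with the size and support of $\eta, \nabla\eta$ then yields
\[
\int_0^{R^2}\int_{B_R(x_0)} |\nabla u|^2\, dx\, dt \le c R^n \|u_0\|_{L^\infty(\R^n)}^2,
\]
which is the required scale-invariant bound. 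I expect this step to be the main obstacle, since it is the only one where Lemma~\ref{kernela} alone does not close the argument and the heat equation itself must be used.
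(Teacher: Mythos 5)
Your proof is correct and follows essentially the same route as the paper: kernel convolution bounds give the $L^\infty$ and $t^{1/2}\|\nabla u\|_{L^\infty}$ estimates (and hence the $L^{n+4}$ term on the upper half of the cylinder), while the Carleson $L^2$ term is handled by the same Caccioppoli argument with a spatial cutoff. The only difference is cosmetic: the paper first reduces to the unit scale by translation and scaling invariance and uses a fixed cutoff on $B_2(0)$, whereas you carry out the identical energy estimate directly at scale $R$.
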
 
\begin{proof}
The estimate \eqref{homog2nda} is invariant under translations and the scaling ($\lambda>0$) $u_\lambda(x,t)=u(\lambda x, \lambda^2 t)$ and therefore it suffices to show
\begin{align*} 
|u(0,1)| +|\nabla u(0,1)|&+  ||\nabla u||_{L^2(B_1(0)\times (0,1))}+||\nabla u||_{L^{n+4}(B_1(0)\times (\frac12,1))} \\
&\le c || u_0 ||_{L^\infty(\R^n)}.
\end{align*} 
Now \eqref{kernel2c} implies that for $i\in \{0,1\}$ we have 
\begin{align*}
\sup_{x\in B_1(0)} \sup_ {\frac{1}{2}\le t\le 1}|\nabla^i u(x,t)|&\le \sup_{x\in B_1(0)} \sup_{\frac{1}{2}\le t\le 1}|\int_{\R^n}\nabla^{i} \Phi(y,t)  u_0(x-y) dy|\\
&\le c||u_0||_{L^\infty(\R^n)}.
\end{align*} 
In order to estimate the third term we let $\eta\in C^\infty_c(B_2(0))$, $0\le \eta \le 1$, $\eta \equiv 1$ in $B_1(0)$ with $||\nabla \eta||_{L^\infty(\R^n)}\le c$ be a standard cut-off function. Multiplying the homogeneous heat equation with $\eta^2 u$ and integrating by parts we get with the help of Young's inequality and the pointwise estimate for $u$
\begin{align*}
\partial_t  \int_{\R^n}\eta^2 |u|^2+ \int_{\R^n} \eta^2 |\nabla u|^2 \le c\int_{B_2(0)} |\nabla \eta|^2 |u|^2 \le c|| u_0 ||^2_{L^\infty(\R^n)}. 
\end{align*}
Integrating this estimate from $0$ to $1$ and using the properties of $\eta$ yields the desired result.
\end{proof}

\begin{bem} 
The choice of the $L^2$-spacetime norm of the gradient is motivated by the Carleson measure characterization of $BMO(\R^n)$ (see \cite{stein93, koch01}). Namely, for a solution $u$ of $\partial_t u-\Delta u=0$ on $\R^n\times (0,\infty)$ with $u(\cdot,0)=u_0$, we have 
\begin{align*}
||u_0||_{BMO(\R^n)}=\sup_{x\in \R^n} \sup_{R>0}R^{-\frac{n}{2}} || \nabla u||_{L^2(B_R(x)\times (0,R^2))}
\end{align*}
in the sense that the right hand side defines an equivalent norm for $BMO(\R^n)$. 
\end{bem}

\subsection{Biharmonic heat kernel}
The biharmonic heat kernel $b(x,t)$ is the fundamental solution of 
\begin{align*}
(\partial_t+\Delta^2)b(x,t)=0\ \ \ \text{on}\ \ \R^n\times (0,\infty)
\end{align*}
and it is given by
\begin{align*}
 b(x,t)=&  \mathcal{F}^{-1}(e^{-|k|^4t})\\
=& t^{-\frac{n}{4}}g(\eta),
\end{align*}
where $\eta=xt^{-\frac{1}{4}}$ and 
\[
 g(\eta)= (2\pi)^{-\frac{n}{2}} \int_{\R^n}  e^{i\eta k -|k|^4} dk. 
\]
We have the estimate
\begin{align*}
 |g(\eta)|\le K (1+|\eta|)^{-\frac{n}{3}} e^{- \alpha |\eta|^{\frac{4}{3}}}
\end{align*}
with $\alpha= 2^{\frac13} \frac3{16} $.  
Additionally we have for every $m \in \N$ that
\[
 |\frac{d^m g}{d \eta^m}(\eta)| \le K_m  (1+|\eta|)^{-\frac{n-m}3}    e^{- \alpha |\eta|^{\frac{4}{3}}}. 
\]
Standard proofs of these estimates are provided in appendix A.
In the following Lemma we rephrase the above estimates on $b$ and its derivatives in such a way that we can directly apply them later on.
\begin{lemma}\label{kernel}
We have for every $t>0$ and $x\in \R^n$ that
\begin{align}
|b(x,t)|\le  c t^{-\frac{n}{4}} exp\Big(-\alpha \frac{|x|^{\frac{4}{3}}}{t^{\frac{1}{3}}}\Big).\label{kernel1}
\end{align}
Moreover we have for every $k,l\in \N_0$ and $t>0$, $x\in \R^n$
\begin{align}
 |\partial_t^l \nabla^k b(x,t)| &\le c (t^{\frac{1}{4}}+|x|)^{-n-k-4l} \ \ \ \text{and} \label{kernel2}\\
\| \partial_t^l \nabla^k b(\cdot,t) \|_{L^1(\R^n)} &\le c t^{-l-\frac{k}{4}}. \label{kernel2a}
\end{align}
Finally, for all $(x,t) \in \R^n\times (0,1) \backslash \big( B_1(0)\times (0,\frac{1}{4}) \big)$ and all $0\le j \le 4$ there exist constants $c,c_1>0$ such that
\begin{align} 
|\nabla^j b(x,t)|\le c e^{- c_1|x|}.\label{decay} 
\end{align}
\end{lemma}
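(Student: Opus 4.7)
My plan is to deduce all four estimates from the self-similar representation $b(x,t)=t^{-n/4}g(\eta)$ with $\eta=xt^{-1/4}$, combined with the pointwise bounds
\begin{equation*}
\Bigl|\tfrac{d^m g}{d\eta^m}(\eta)\Bigr|\le K_m(1+|\eta|)^{-(n-m)/3} e^{-\alpha|\eta|^{4/3}}
\end{equation*}
on $g$ and its derivatives recorded immediately before the lemma.

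For \eqref{kernel1}, I substitute $m=0$, drop the harmless prefactor $(1+|\eta|)^{-n/3}\le 1$, and note that $|\eta|^{4/3}=|x|^{4/3}t^{-1/3}$. For \eqref{kernel2} I first reduce to the case $l=0$: since $b$ solves $\partial_t b=-\Delta^2 b$, each time derivative can be converted into four spatial derivatives, so $\partial_t^l\nabla^k b$ is a linear combination of spatial derivatives of $b$ of total order $k+4l$, and the desired bound $(t^{1/4}+|x|)^{-n-k-4l}$ is exactly what \eqref{kernel2} with $l=0$ and $k$ replaced by $k+4l$ yields. When $l=0$, the chain rule gives
\begin{equation*}
|\nabla^k b(x,t)| \le c\, t^{-(n+k)/4}(1+|\eta|)^{-(n-k)/3} e^{-\alpha|\eta|^{4/3}}.
\end{equation*}
If $|x|\le t^{1/4}$ then $|\eta|\le 1$, the decay factors are bounded, and $t^{-(n+k)/4}\asymp (t^{1/4}+|x|)^{-n-k}$. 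If $|x|>t^{1/4}$ then $|\eta|>1$ and the elementary identity
\begin{equation*}
t^{-(n+k)/4}\,|\eta|^{-(n-k)/3}=|x|^{-n-k}\,|\eta|^{(2n+4k)/3}
\end{equation*}
reduces the claim to the polynomial-beats-exponential bound $|\eta|^{(2n+4k)/3}e^{-\alpha|\eta|^{4/3}}\le c$.

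For \eqref{kernel2a}, after the same reduction to purely spatial derivatives, the change of variables $x=t^{1/4}\eta$ gives $dx=t^{n/4}d\eta$ and $|\partial_t^l\nabla^k b(x,t)|=t^{-(n+k)/4-l}|h_{k,l}(\eta)|$ for some function $h_{k,l}$ built from $g$ and its derivatives; it inherits the factor $e^{-\alpha|\eta|^{4/3}}$, hence lies in $L^1(\R^n)$, and the change of variables produces the factor $t^{-l-k/4}$ as claimed. Finally, for \eqref{decay} I split the complement of $B_1(0)\times(0,1/4)$ in $\R^n\times(0,1)$ into two subregions. If $|x|\ge 1$ then $|\eta|\ge|x|\ge 1$ (since $t<1$ forces $t^{1/4}<1$); splitting the exponential in half, one factor $e^{-\alpha|\eta|^{4/3}/2}$ absorbs all polynomial factors in $|\eta|$ (and in $t^{-1}$ via the identity above), while the other satisfies $e^{-\alpha|\eta|^{4/3}/2}\le e^{-\alpha|x|^{4/3}/2}\le e^{-\alpha|x|/2}$. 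If $|x|<1$ and $t\in[1/4,1)$ then $|\eta|$ and $t^{-(n+j)/4}$ are bounded, so $|\nabla^j b|$ is uniformly bounded, which is automatically $\le c\,e^{-c_1|x|}$ after enlarging $c$.

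The only mildly tedious step is the exponent bookkeeping in the $|x|>t^{1/4}$ subcase of \eqref{kernel2}; everything else is direct substitution, rescaling, or the above case analysis.
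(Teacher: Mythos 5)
Your derivation is correct and follows exactly the route the paper intends: the paper offers no separate proof of this lemma, presenting it as a direct rephrasing of the pointwise bounds on $g$ and its derivatives (established via stationary phase in Appendix A) through the self-similar form $b(x,t)=t^{-n/4}g(xt^{-1/4})$. Your scaling bookkeeping, the reduction of time derivatives to spatial ones via $\partial_t b=-\Delta^2 b$, and the two-region case analysis for \eqref{decay} are precisely the routine details being suppressed, and they check out.
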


Solutions of the biharmonic heat equation which grow slower than $e^{|x|^{\frac43}}$ at infinity are unique, and therefore, whenever we speak of a solution of the biharmonic heat equation we mean the one which satisfies this growth condition.
 
We also need the following estimate for solutions of the homogeneous problem.
\begin{lemma}\label{homog}
Let $u:\R^n\times \R^+_0\rightarrow \R$ be a solution of the homogeneous linear equation 
\[ u_t + \Delta^2 u = 0 , \qquad u(\cdot,0) = u_0\in C^{0,1}(\R^n). \]
Then we have 
\begin{align} 
|| \nabla u ||_{L^\infty(\R^n\times \R^+)} &+\sup_{t>0}t^{\frac{1}{4}}||\nabla^2 u(t)||_{L^\infty(\R^n)}\nonumber \\
&+  \sup_{x\in \R^n} \sup_{R>0}  R^{\frac{2}{n+6}} || \nabla^2 u||_{L^{n+6}(B_R(x)\times (\frac{R^4}{2},R^4))} \nonumber \\
\le& c || u_0 ||_{C^{0,1}(\R^n)}.\label{homog1} 
\end{align} 
\end{lemma}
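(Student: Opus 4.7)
The plan is to mirror the structure of the proof of Lemma \ref{homog2nd}: first reduce to a unit-scale estimate by translation and scaling invariance, then use the explicit convolution representation $u = b(\cdot,t) \ast u_0$ together with the $L^1$ bounds \eqref{kernel2a} on the biharmonic heat kernel and its derivatives.

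More precisely, the estimate \eqref{homog1} is invariant under translation and under the parabolic rescaling $u_\lambda(x,t) = u(\lambda x, \lambda^4 t)$, since $(\partial_t + \Delta^2)$ is invariant under $(x,t)\mapsto(\lambda x, \lambda^4 t)$ and all three terms of the left-hand side scale like $\|u_0\|_{C^{0,1}}$ (which transforms as $\|u_{0,\lambda}\|_{C^{0,1}} = \lambda\|u_0\|_{C^{0,1}}$). Hence it suffices to prove
\[
|\nabla u(0,1)| + |\nabla^2 u(0,1)| + \|\nabla^2 u\|_{L^{n+6}(B_1(0)\times(\tfrac12,1))} \le c\|u_0\|_{C^{0,1}(\R^n)}.
\]
Differentiating the equation, $v := \nabla u$ solves $(\partial_t+\Delta^2)v = 0$ with initial datum $\nabla u_0 \in L^\infty(\R^n)$, which has at most linear growth; by the uniqueness statement following Lemma \ref{kernel}, $v$ admits the representation $\nabla u(\cdot,t) = b(\cdot,t)\ast \nabla u_0$.

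From this and the $L^1$ estimate \eqref{kernel2a} one immediately obtains, for every $t>0$,
\[
\|\nabla u(t)\|_{L^\infty(\R^n)} \le \|b(\cdot,t)\|_{L^1(\R^n)}\|\nabla u_0\|_{L^\infty(\R^n)} \le c\|u_0\|_{C^{0,1}(\R^n)},
\]
and, differentiating once more,
\[
\|\nabla^2 u(t)\|_{L^\infty(\R^n)} \le \|\nabla b(\cdot,t)\|_{L^1(\R^n)}\|\nabla u_0\|_{L^\infty(\R^n)} \le c\,t^{-1/4}\|u_0\|_{C^{0,1}(\R^n)}.
\]
This takes care of the first two terms of \eqref{homog1}.

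For the third term, since the space-time cylinder in question is $B_R(x)\times(R^4/2,R^4)$ and is bounded away from $t=0$, no energy argument is needed; at the reduced unit scale $t\in(\tfrac12,1)$ the pointwise bound just obtained gives $\|\nabla^2 u\|_{L^\infty(B_1(0)\times(\tfrac12,1))}\le c\|u_0\|_{C^{0,1}}$, whence the $L^{n+6}$ norm over the bounded cylinder is controlled by the $L^\infty$ norm times a constant. The only delicate point is the scaling bookkeeping for the $L^{n+6}$ term, which I expect to be the most error-prone step: a direct computation of $\|\nabla^2 u_\lambda\|_{L^{n+6}(B_R\times(R^4/2,R^4))}$ after substituting $y=\lambda x$, $s=\lambda^4 t$ gives a factor $\lambda^{(n+8)/(n+6)}$, which combines with $R^{2/(n+6)}$ to reproduce $(\lambda R)^{2/(n+6)}$ up to a factor $\lambda$, matching the scaling of $\|u_0\|_{C^{0,1}}$. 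Compared with Lemma \ref{homog2nd}, the argument is actually simpler because there is no Carleson-type $L^2$ integral reaching down to $t=0$, so the cut-off energy estimate of the second-order case is entirely avoided.
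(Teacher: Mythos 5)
Your proof is correct and follows essentially the same route as the paper: reduce by translation and parabolic scaling to the unit-scale estimate, represent $\nabla^i u=\nabla^{i-1}b\ast\nabla u_0$ and apply the $L^1$ kernel bounds \eqref{kernel2a}, with the $L^{n+6}$ term absorbed by the pointwise bound on the unit cylinder. The paper's proof is just a terser version of this same argument (it even leaves the $L^{n+6}$ step implicit), so there is nothing substantive to add.
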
 
\begin{proof}
Since the estimate \eqref{homog1} is invariant under translations and the scaling $u_\lambda(x,t)=\frac{1}{\lambda} u(\lambda x,\lambda^4 t)$ ($\lambda>0$), it suffices to show that
\begin{align*} 
| \nabla u (0,1)| +|\nabla^2 u(0,1)|+ || \nabla^2 u||_{L^{n+6}(B_1(0)\times (\frac{1}{2},1))} \le c || u_0 ||_{C^{0,1}(\R^n)}.
\end{align*} 
Using \eqref{kernel1} and \eqref{kernel2a} we get for $i=1,2$
\begin{align*}
\sup_{x\in B_1(0)} \sup_ {\frac{1}{2}\le t\le 1}|\nabla^i u(x,t)|&\le \sup_{x\in B_1(0)} \sup_{\frac{1}{2}\le t\le 1}|\int_{\R^n}\nabla^{i-1} b(y,t) \nabla u_0(x-y) dy|\\
&\le c||u_0||_{C^{0,1}(\R^n)}.
\end{align*} 
This finishes the proof of the Lemma.
\end{proof}

\section{Willmore and surface diffusion flow}
\setcounter{equation}{0}
For a closed two-dimensional surface $\Sigma$ and an immersion $f:\Sigma \rightarrow \R^3$ the Willmore functional is defined by 
\begin{align}
W(f)=\frac{1}{4} \int_\Sigma H^2 d\mu_g,\label{energy}
\end{align}
where $g$ is the induced metric, $H=\kappa_1+\kappa_2$ is the mean curvature of $\Sigma$ and $d\mu_g$ is the area element. Critical points of $W$ are called Willmore surfaces and they are solutions of the Euler-Lagrange equation
\begin{align}
\Delta_g H + \frac{1}{2}H^3-2HK=0, \label{EL}
\end{align}
where $\Delta_g$ is the Laplace-Beltrami operator of the induced metric and $K=\kappa_1 \kappa_2$ is the Gauss curvature of $\Sigma$. The Willmore flow is the $L^2$-gradient flow of $W$ and is therefore given by the following fourth order quasilinear parabolic equation
\begin{align}
f_t^\perp &=-\Delta_g H - \frac{1}{2}H^3+2HK \ \ \ \text{on}  \ \ \Sigma \times [0,T), \nonumber \\
f(\cdot,0)&=f_0 ,\label{flow}
\end{align}
where $f_0:\Sigma \rightarrow \R^3$ is some given immersion and $f_t^\perp$ denotes the normal part of $f_t$. In the case that $\Sigma$ is a sphere Kuwert \& Sch\"atzle \cite{K-S1}-\cite{K-S2} showed that if $W(f_0)\le 8\pi$, then the Willmore flow exists for all times and subconverges to a smooth Willmore sphere. On the other hand Mayer \& Simonett \cite{mayer02} gave a numerical example for a singularity formation of the Willmore flow for an initial immersion of a sphere $f_0$ with $W(f_0)<8\pi +\varepsilon$, where $\varepsilon>0$ is arbitrary (for an analytic proof of this result see \cite{blatt}). Moreover, in a recent paper, Chill, Fasangova \& Sch\"atzle \cite{chill08} showed that if $f_0$ is $W^{2,2}\cap C^1$ close to a $C^2$ local minimizer of $W$ (i.e. a minimizer among all closed immersions which are $C^2$ close to each other), then the Willmore flow with initial data $f_0$ exists for all times and converges (after reparametrization) to a $C^2$ local minimizer of $W$.

In this section we are interested in the Willmore flow for graphs on $\R^2$ (so called entire graphs). Hence we assume that there exists a function $u:\R^2 \rightarrow \R$ such that $\Sigma=\text{graph}(u)=\{(x,u(x))| x\in \R^2\}$. Standard calculations then yield
\begin{align*}
f_t^\perp&= \frac{u_t}{v},\ \ \ H= \on{div} (\frac{\nabla u}{v}), \ \ \ K= \frac{\det \nabla^2 u}{v^4} \ \ \ \text{and} \\
\Delta_g H &=\frac{1}{v}\on{div} \Big((vI -\frac{\nabla u \otimes \nabla u}{v}) \nabla H \Big), 
\end{align*}
where $v=\sqrt{1+|\nabla u|^2}$. From the calculations in \cite{deckelnick06} we get
\begin{align*}
  \Delta_g H + \frac{1}{2}H^3-2HK= \on{div} \Big( \frac{1}{v}
  \big((I-\frac{\nabla u \otimes \nabla u}{v^2}) \nabla
  (vH)-\frac{1}{2}H^2 \nabla u\big) \Big)
\end{align*}
and therefore the Willmore flow equation \eqref{flow} can be rewritten as
\begin{align}
  u_t+v\on{div} \Big( \frac{1}{v} \big((I-\frac{\nabla u \otimes
    \nabla u}{v^2}) \nabla (vH)-\frac{1}{2}H^2 \nabla u\big) \Big)=0 \
  \ \ \text{on} \ \ \R^2 \times [0,T) \label{wflow}
\end{align}
with initial condition $u(\cdot, 0)=u_0$, where $u_0:\R^2 \rightarrow
\R$ is some function. The following observation concerning the scaling behavior of a solution of the Willmore flow turns out to be very important: If $u(x,t)$ is a solution of \eqref{wflow} with initial condition $u(\cdot,0)=u_0$, then the
rescaled function
\begin{align}
 u_\lambda(x,t)= \frac{1}{\lambda}u(\lambda x, \lambda^4 t)\label{scaling}
\end{align}
is also a solution of \eqref{wflow} with initial condition $u_\lambda(\cdot,0)=u_{\lambda,0}=\frac{1}{\lambda}u_0(\lambda \cdot)$. 

Our aim in this section is to show the existence of a global unique and analytic solution of \eqref{wflow} under very weak regularity assumptions on the initial data $u_0$. Since the method we use to construct the solution does not depend on the dimension we consider in the following solutions of \eqref{wflow} on $\R^n$ ($n\in \N$). 

Before stating the main result of this section we need to define a suitable Banach space. For functions $u:\R^n\times (0,\infty)\rightarrow \R$ which are continuous and twice differentiable with respect to the space variable we define a norm by
\[
||u||_{X_\infty}:=\sup_{t>0} || \nabla u(t) ||_{L^\infty(\R^n)} +  \sup_{x\in \R^n} \sup_{R>0} 
R^{\frac{2}{n+6}}  || \nabla^2 u||_{L^{n+6}(B_R(x)\times (\frac{R^4}{2},R^4))}.
\]
Moreover we define the Banach space 
\begin{align*}
 X_\infty=& \{u |\ \ ||u||_{X_\infty}<\infty \}.
\end{align*}
The following Theorem is our main result for the Willmore flow of graphs.
\begin{theorem}\label{main}
  There exists $\varepsilon>0$, $C>0$ such that for every $u_0\in
  C^{0,1}(\R^n)$ satisfying $||u_0||_{C^{0,1}(\R^n)}<\varepsilon$
  there exists an analytic solution $u\in X_\infty$ of the Willmore flow \eqref{wflow} with $u(\cdot,0)=u_0$ which satisfies $||u||_{X_\infty}\le C||u_0||_{C^{0,1}(\R^n)}$. The solution is unique in the ball $B^{X_\infty}(0,C\eps)=\{u\in X_\infty |\ \ ||u||_{X_\infty}\le C\eps\}$. 

More precisely, there exist $R>0$, $c>0$ such that for every $k\in \N_0$ and multiindex $\alpha\in \N_0^n$ we have the estimate
\begin{align}
  \sup_{x\in \R^n}\sup_{ t>0} |(t^{\frac{1}{4}} \nabla)^\alpha (t\partial_t)^k \nabla u(x,t)|\le c||u_0||_{C^{0,1}(\R^n)}R^{|\alpha|+k}(|\alpha|+k)!. \label{ana}
\end{align}  
Moreover the solution $u$ depends analytically on $u_0$. 
\end{theorem}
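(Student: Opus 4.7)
The plan is to solve \eqref{wflow} by a Banach fixed-point argument in $X_\infty$. A convergent Taylor expansion of $v^{-1}=(1+|\nabla u|^2)^{-1/2}$ and its powers, valid when $\|\nabla u\|_{L^\infty}$ is small, rewrites \eqref{wflow} as
\[
 u_t + \Delta^2 u = F(u), \qquad F(u) = \sum_{k\ge 2} F_k(u),
\]
with each $F_k$ a finite sum of monomials in the jet $(\nabla u,\nabla^2 u,\nabla^3 u,\nabla^4 u)$ of total degree $k$. Exploiting the divergence form of \eqref{wflow} and integration by parts, each such monomial can be rearranged as $\partial_i\partial_j\bigl(\nabla^{a_1}u\cdots\nabla^{a_k}u\bigr)$ with $a_\ell\in\{1,2\}$ (so only factors $\nabla u$ and $\nabla^2 u$ remain inside), letting two outer derivatives be moved onto the biharmonic heat kernel in the Duhamel formula
\[
 T(u) := e^{-t\Delta^2} u_0 + \int_0^t e^{-(t-s)\Delta^2}F(u)(s)\,ds.
\]
Lemma \ref{homog} bounds the linear term by $c\|u_0\|_{C^{0,1}(\R^n)}$.

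The heart of the argument is the multilinear estimate
\[
 \Big\|\int_0^t \partial_i\partial_j e^{-(t-s)\Delta^2}\bigl(\nabla^{a_1}u_1\cdots\nabla^{a_k}u_k\bigr)\,ds\Big\|_{X_\infty} \le c^k\prod_{\ell=1}^k \|u_\ell\|_{X_\infty},
\]
which I would prove separately for the two components of $\|\cdot\|_{X_\infty}$. For the $\sup_t\|\nabla u(t)\|_{L^\infty}$ component I use the $L^1$-bound on $\nabla\partial_i\partial_j b(\cdot,t-s)$ from Lemma \ref{kernel} after separating by H\"older the factors with $a_\ell=1$ (placed in $L^\infty$) from those with $a_\ell=2$ (placed in the Morrey $L^{n+6}$). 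For the Carleson/Morrey component, working on the cylinder $B_R(x)\times(R^4/2,R^4)$, I split the $s$-integral at $s=R^4/2$, using pointwise off-cylinder decay of the kernel from Lemma \ref{kernel} for the small-$s$ contribution and $L^{n+6}\to L^{n+6}$ boundedness of the relevant derivatives of the biharmonic semigroup combined with the Morrey bound on $\nabla^2 u_\ell$ for the large-$s$ contribution. The exponent $n+6$ is chosen sharply so that the $s$-integral converges near $s=t$ and the spatial scaling matches the Carleson weight $R^{2/(n+6)}$. Summation over $k\ge 2$ yields
\[
 \|T(u)-T(\tilde u)\|_{X_\infty} \le c\,\bigl(\|u\|_{X_\infty}+\|\tilde u\|_{X_\infty}\bigr)\,\|u-\tilde u\|_{X_\infty},
\]
and Banach's contraction principle produces the unique solution in $B^{X_\infty}(0,C\eps)$.

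Analyticity follows from Angenent's method \cite{angenent90,angenent90a}. I introduce complex parameters $(\lambda,\xi)\in\mathbb{C}\times\mathbb{C}^n$ near $(1,0)$ and, using the scale- and translation-invariance of \eqref{wflow} and of $\|\cdot\|_{X_\infty}$, reformulate the fixed-point problem as a family of contraction maps $T_{\lambda,\xi}$ depending polynomially (hence holomorphically) on $(\lambda,\xi)$ and contracting uniformly on a small complex neighbourhood of $(1,0)$. The analytic implicit function theorem in Banach spaces then supplies a fixed point holomorphic in $(\lambda,\xi)$, and Cauchy's estimates on a polydisc of radius $r$ around $(1,0)$ yield \eqref{ana} with $R=r^{-1}$, since differentiation in $\lambda$ at $\lambda=1$ generates the parabolic dilation $t\partial_t$ (modulo $x\cdot\nabla$ terms, handled by translation parameters) and differentiation in $\xi$ generates $t^{1/4}\nabla$. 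The same theorem applied to the (affine in $u_0$, polynomial in $u$) dependence of $T$ on $u_0$ gives analytic dependence on the initial data. The main obstacle is clearly the multilinear estimate: one must verify monomial by monomial that the biharmonic smoothing together with the two divergence-form derivatives precisely absorbs the regularity deficit in the scale-invariant $X_\infty$ norm, and this is exactly where the specific exponent $n+6$ and the Carleson scaling $R^{2/(n+6)}$ are pinned down.
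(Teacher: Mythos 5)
Your overall strategy (Duhamel formulation, fixed point in the scale-invariant space $X_\infty$, Angenent's parameter trick plus the analytic implicit function theorem for analyticity, the bound \eqref{ana} via Cauchy estimates, and analytic dependence on $u_0$) is the same as the paper's, but the heart of your argument has a genuine gap. The claimed algebraic step --- that every monomial of the nonlinearity can be rearranged as a pure double divergence $\partial_i\partial_j\bigl(\nabla^{a_1}u\cdots\nabla^{a_k}u\bigr)$ with only first and second derivatives inside --- is not justified and is stronger than what the structure of \eqref{wflow} yields: the equation carries the outer factor $v\,\on{div}(\cdot)$ rather than a divergence, and after removing third derivatives by integration by parts one is left, as in Lemma \ref{rewr}, with $f_0[u]+\nabla f_1[u]+\nabla^2_{ij} f_2^{ij}[u]$, where the terms cubic and quadratic in $\nabla^2 u$ come with zero, respectively one, outer derivative; generic cubic expressions in $\nabla^2 u$ with coefficients depending on $\nabla u$ are not exact double divergences of expressions in $(\nabla u,\nabla^2 u)$.

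More decisively, even if such a rearrangement were available it would break your estimates rather than help them. With two derivatives outside every term, the operator relevant for the Morrey component of $\|\cdot\|_{X_\infty}$ is $\nabla^2\int_0^t e^{-(t-s)\Delta^2}\partial_i\partial_j(\cdot)\,ds$, i.e.\ four derivatives on $b$. By \eqref{kernel2a} one has $\|\nabla^4 b(\cdot,t-s)\|_{L^1}\sim (t-s)^{-1}$, which is not integrable in time, and by \eqref{kernel2} one checks $\nabla^4 b\notin L^{\frac{n+6}{n+4}}(\R^n\times(0,1))$; hence neither your ``$L^{n+6}\to L^{n+6}$ boundedness of the semigroup for the large-$s$ contribution'' nor a Young-type convolution bound closes the estimate. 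This operator is a borderline parabolic singular integral with no gain of integrability, so it cannot map a product of two or three Morrey-$L^{n+6}$ factors (which lies only in $L^{\frac{n+6}{2}}$ or $L^{\frac{n+6}{3}}$ on cylinders) back into $L^{n+6}$. The paper's decomposition is calibrated exactly to avoid this: terms with two or three factors of $\nabla^2 u$ carry at most one outer derivative, so at most three derivatives fall on $b$ and Young's inequality together with the dyadic far-field Carleson summation supplies the gain of integrability, while the single critical term $\nabla^2 V\nabla^2 f_2$ (one factor of $\nabla^2 u$, already in $L^{n+6}$) is handled by the space-time Calder\'on--Zygmund estimate \eqref{CZ} on the homogeneous space $(\R^n\times\R,d,m^{n+1})$, not by semigroup bounds. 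A similar repair is needed in your $L^\infty$-gradient estimate, where you pair an $L^1$ kernel bound with factors $\nabla^2 u_\ell$ that are controlled only in the Morrey norm, not in $L^\infty$; the paper instead pairs kernels in dual Lebesgue norms over the unit cylinder with the $Y_{0},Y_1,Y_2$ norms and sums the far field dyadically.
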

We remark that in the above Theorem the initial value $u_0$ is allowed
to have infinite Willmore energy. Moreover weak solutions in $X_\infty$ are fixed points of the fixed point map $F_{u_0}$, defined below.

It is an interesting open problem if one can drop the
smallness assumption on the Lipschitz norm of $u_0$ in Theorem
\ref{main}.
\newline 
In the case of the mean curvature flow for entire
graphs, Ecker \& Huisken \cite{ecker89,ecker91} showed the
existence of a global solution for initial data which are locally
Lipschitz continuous. Since the equation \eqref{wflow} is of fourth
order it is not clear if one can expect a corresponding result
in this situation.

We would like to remark that the Willmore flow for graphs has
previously been studied from a numerical point of view by Deckelnick
\& Dziuk \cite{deckelnick06}.

In order to show the existence of a solution if the Willmore flow we need to rewrite the equation \eqref{wflow}. We start by introducing some notation. We use
the $\star$ notation to denote an arbitrary linear combination of
contractions of indices for derivatives of $u$. For example we have
$\nabla^2_{ij}u \nabla_i u \nabla_j u=\nabla^2 u \star \nabla u \star
\nabla u=|\nabla u|^2\Delta u$. Moreover we use the abstract notation
\begin{align*}
P_i(\nabla u)=\underbrace{\nabla u \star \ldots \star \nabla u}_{\text{$i$-times}}.
\end{align*}
With the help of this notation we are now able to rewrite equation \eqref{wflow} in a form which is more suitable for our purposes.
\begin{lemma}\label{rewr}
The Willmore flow equation \eqref{wflow} can be written as
\begin{align}
 u_t+\Delta^2 u= f_0[u]+\nabla_i f^i_1[u]+\nabla^2_{ij} f_2^{ij}[u]=:f[u], \label{rewr1}
\end{align}
where 
\begin{align*}
 f_0[u|&=  \nabla^2 u \star \nabla^2 u\star \nabla^2 u \star \sum_{k=1}^4 v^{-2k} P_{2k-2}(\nabla u) ,\\
f_1[u]&=\nabla^2 u \star \nabla^2 u \star \sum_{k=1}^4 v^{-2k} P_{2k-1}(\nabla u)\ \ \text{and}\\
f_2[u]&= \nabla^2 u\star \sum_{k=1}^2 v^{-2k} P_{2k}(\nabla u).
\end{align*}
\end{lemma}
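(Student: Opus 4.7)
The proof is a direct algebraic rearrangement of \eqref{wflow}; the goal is to isolate $\Delta^2 u$ as the principal part and verify that everything left over admits the claimed doubly-divergence structure.

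First, I would compute $vH=v\on{div}(\nabla u/v)=\Delta u-v^{-2}\nabla^2_{ij}u\,\nabla_i u\,\nabla_j u$, which displays $\Delta u$ as the principal part together with a structured correction of type $v^{-2}\nabla^2 u\star P_2(\nabla u)$. Using the product rule $v\on{div}(X)=\on{div}(vX)-\nabla v\cdot X$ with $\nabla v=v^{-1}\nabla^2 u\,\nabla u$ to move the outer $v$ inside, \eqref{wflow} becomes
\[
u_t+\on{div}\bigl[(I-v^{-2}\nabla u\otimes\nabla u)\nabla(vH)\bigr]-\tfrac12\on{div}(H^2\nabla u)-\nabla v\cdot X=0,
\]
where $X$ denotes the original divergence integrand.

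Next, substituting $\nabla(vH)=\nabla\Delta u-\nabla\bigl[v^{-2}\nabla^2 u(\nabla u,\nabla u)\bigr]$ into the first divergence and expanding via the Leibniz rule produces $\Delta^2 u$ together with two kinds of lower-order contributions: the term $-\Delta\bigl[v^{-2}\nabla^2 u(\nabla u,\nabla u)\bigr]$, which already has the form $\nabla^2_{ij}f_2^{ij}$ with $f_2^{ij}=-\delta^{ij}v^{-2}\nabla^2u(\nabla u,\nabla u)$ and $k=1$; and a term $-\on{div}\bigl[v^{-2}(\nabla u\cdot\nabla\Delta u)\nabla u\bigr]$, whose $\nabla\Delta u$ factor must be redistributed by two integrations by parts using the identity $\nabla_l u\,\nabla_l\Delta u=\nabla_l(\nabla_l u\,\Delta u)-(\Delta u)^2$, yielding a doubly-divergence piece with $f_2$-coefficient of type $v^{-2}\nabla^2u\star P_2$ plus single-divergence and undifferentiated remainders that each pick up one more factor of $\nabla^2 u$.

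Finally, I would collect every remaining contribution and verify it fits one of the three patterns. Each term is a product of some number of $\nabla^2 u$ factors, a polynomial in $\nabla u$, and a factor $v^{-2k}$; terms with one $\nabla^2 u$ absorbing two outer derivatives go into $\nabla^2_{ij}f_2^{ij}$, those with two $\nabla^2 u$ factors absorbing one outer derivative go into $\nabla_i f_1^i$, and those with three $\nabla^2 u$ factors (coming mainly from $H^2\nabla u$ and $\nabla v\cdot X$) contribute to $f_0$. The parity of $P_j(\nabla u)$ (even in $f_0,f_2$, odd in $f_1$) and the ranges $1\le k\le 4$ (for $f_0,f_1$) and $1\le k\le 2$ (for $f_2$) are forced by the identities $\nabla v=v^{-1}\nabla^2 u\,\nabla u$ and $\nabla(v^{-2k})=-2k\,v^{-2k-2}\nabla^2_{ij}u\,\nabla_j u$ combined with the scaling $u_\lambda(x,t)=\lambda^{-1}u(\lambda x,\lambda^4 t)$ under which both sides of \eqref{rewr1} must be homogeneous of weight $\lambda^3$. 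The main obstacle is just the systematic bookkeeping needed to verify that no term escapes one of these three patterns; no analytical estimate is involved.
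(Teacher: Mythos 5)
Your proposal is correct and follows essentially the same route as the paper: expand $vH=\Delta u - v^{-2}\nabla^2 u(\nabla u,\nabla u)$, move $v$ past the divergence via the product rule, and use pointwise product-rule identities to rewrite every third-derivative term as an exact divergence (at the cost of an extra factor of $\nabla^2 u$) before sorting the remainders into $f_0$, $\nabla_i f_1^i$ and $\nabla^2_{ij}f_2^{ij}$ — this is exactly the paper's term-by-term computation. The only point to tighten is that the undifferentiated remainder $\nabla v\cdot X$ still contains $\nabla\Delta u$ with just one factor of $\nabla^2 u$, so it cannot be assigned to $f_0$ directly; it requires the same redistribution you apply elsewhere (in the paper this is the treatment of the terms $I$ and $II$, using $\nabla_k u\,\nabla^3_{ijk}u\,\nabla^2_{ij}u=\tfrac12\nabla_k u\,\nabla_k|\nabla^2 u|^2$), rather than being settled by the scaling and parity bookkeeping alone.
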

\begin{proof}
We calculate term by term.
\begin{align*}
\frac{v}{2} \on{div} ( \frac{\nabla u}{v} H^2)=& \on{div} (\frac{\nabla u H^2}{2})-\frac{\nabla v \nabla u}{2v} H^2\\
=& \nabla_i f_1^i[u] +f_0[u]
\end{align*}
\begin{align*}
v \on{div} ( \frac{1}{v} \nabla (vH)) =& \Delta (vH) -\frac{\nabla v}{v} \nabla (vH)\\
=& \Delta^2 u -\Delta (\frac{\nabla v \nabla u}{v})-\frac{\nabla v}{v} \nabla \Delta u+\frac{\nabla v}{v} \nabla (\frac{\nabla v \nabla u}{v})\\
=& \Delta^2 u+ \nabla^2_{ij} f_2^{ij}[u]-I+II.
\end{align*}
Now we can rewrite $I$ as follows:
\begin{align*}
 I=& \nabla_j (\frac{\nabla_k u}{v^2}\nabla^2_{ik} u \nabla^2_{ij} u)-f_0[u]-\frac{\nabla_k u}{v^2} \nabla^3_{ijk} u \nabla^2_{ij} u\\
=& f_0[u]+\nabla_j f_1^j[u]-\frac{\nabla_k u}{2v^2} \nabla_k |\nabla^2 u|^2\\
=& f_0[u]+\nabla_j f_1^j[u]-\nabla_k (\frac{\nabla_k u}{2v^2}  |\nabla^2 u|^2)\\
=& f_0[u]+\nabla_j f_1^j[u].
\end{align*}
For $II$ we argue similarly to get
\begin{align*}
 II=&\frac{ \nabla_j u \nabla_k u \nabla_l u}{v^4}\nabla^2_{ik} u \nabla^3_{ijl} u +f_0[u]\\
=& \frac{ \nabla_j u \nabla_k u \nabla_l u}{2v^4}\nabla_j (\nabla^2_{ik} u \nabla^2_{il} u)+f_0[u]\\
=& f_0 [u]+\nabla_j f_1^j[u].
\end{align*}
Finally we have
\begin{align*}
 v\nabla_i(\frac{\nabla_i u \nabla_j u}{v^3} \nabla_j(vH))=& \nabla_i(\frac{\nabla_i u \nabla_j u}{v^2} \nabla_j(vH))-\frac{\nabla_i v \nabla_i u \nabla_j u}{v^3} \nabla_j(vH)\\
=& \nabla^2_{ij}(\frac{\nabla_i u \nabla_j u}{v} H)+\nabla_i f_1^i[u] +f_0[u],
\end{align*}
where we argued as above to rewrite the last term in the first line. Altogether this finishes the proof of the Lemma.
\end{proof}

Next we write equation \eqref{rewr1} in integral form
\begin{align}
 u(x,t)= Su_0(x,t)+Vf[u](x,t),\label{integral}
\end{align}
where 
\begin{align*}
 Su_0 (x,t)=&  \int_{\R^n} b(x-y,t)u_0(y)dy \ \ \ \text{and}\\
Vf[u](x,t)=& \int_0^t \int_{\R^n} b(x-y,t-s)f[u](y,s)dyds.
\end{align*}
The goal for the rest of this section is to construct a solution of the integral equation \eqref{integral} by using a fixed point argument.

Another very important fourth order geometric evolution equation is the surface diffusion flow, given by
\[
f_t^\perp =-\Delta_g H.
\]
For results on this flow see for example \cite{deckelnick05}.

Restricting again to the situation of entire graphs (and using the above formulas) we see that this equation is equivalent to
\begin{align}
u_t=-\on{div}\Big((vI-\frac{\nabla u \otimes \nabla u}{v})\nabla H\Big) \label{sdif}.
\end{align}
\begin{lemma}\label{rewrsdiff}
The equation for the graphical surface diffusion flow \eqref{sdif} can be written in the form \eqref{rewr1}.
\end{lemma}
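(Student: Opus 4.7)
The plan is to mimic the proof of Lemma \ref{rewr}, isolating $\Delta^2 u$ as the leading fourth-order term of the surface diffusion operator. The key algebraic identities are $v\nabla H = \nabla(vH) - H\nabla v$ (product rule) and $vH = \Delta u - \nabla u\cdot\nabla v/v$ (from expanding $H = \on{div}(\nabla u/v)$). Using these together with the factorization $vI - \frac{\nabla u\otimes\nabla u}{v} = v(I - \frac{\nabla u\otimes\nabla u}{v^2})$, I would rewrite
\[
\on{div}\Bigl((vI - \tfrac{\nabla u\otimes\nabla u}{v})\nabla H\Bigr) = \on{div}\Bigl((I - \tfrac{\nabla u\otimes\nabla u}{v^2})\nabla(vH)\Bigr) - \on{div}\Bigl((I - \tfrac{\nabla u\otimes\nabla u}{v^2})H\nabla v\Bigr).
\]

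For the first term, writing $vH = \Delta u + \Psi$ with $\Psi := -\nabla u\cdot\nabla v/v = -v^{-2}\nabla^2 u \star \nabla u \star \nabla u$ of $f_2$-form, we get $\Delta(vH) = \Delta^2 u + \nabla^2_{ij}(\delta_{ij}\Psi)$, contributing $\Delta^2 u$ plus a $\nabla^2 f_2$-term. The residual piece $\on{div}(\frac{(\nabla u\cdot\nabla(vH))\nabla u}{v^2})$ contains the problematic third-order contribution $\nabla u\cdot\nabla\Delta u$, but these are precisely the expressions analyzed as $I$ and $II$ in the proof of Lemma \ref{rewr} and shown there to be of $f_0 + \nabla f_1$-form via integration by parts (using $\nabla_k u\,\nabla_k\Delta u = \frac{1}{2}\Delta|\nabla u|^2 - |\nabla^2 u|^2$). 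For the second term, expanding $H\nabla v = v^{-2}(\Delta u + \Psi)\nabla^2 u\cdot\nabla u$ shows the argument of the divergence is of $f_1$-form, so the full term is of $\nabla f_1$-form.

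The main obstacle is the careful bookkeeping of powers of $v$ and $\nabla u$ to verify each term matches the precise algebraic structure $v^{-2k}P_{2k-2}$, $v^{-2k}P_{2k-1}$, or $v^{-2k}P_{2k}$ defining $f_0$, $f_1$, $f_2$ in Lemma \ref{rewr}. Since $v^2 = 1 + |\nabla u|^2$, all relevant operations preserve this algebraic structure, so the argument is routine though tedious, and in fact simpler than the Willmore case since the $-\frac{1}{2}H^2\nabla u$ correction is absent. An alternative, cleaner route is to observe that \eqref{sdif} and \eqref{wflow} differ only by lower-order (at most third-order) terms and that this difference can be directly checked to be of $f_0 + \nabla f_1 + \nabla^2 f_2$-form, at which point Lemma \ref{rewr} yields the conclusion.
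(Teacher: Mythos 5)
Your strategy does work and is close in spirit to the paper's, but the paper's bookkeeping is simpler: it splits \eqref{sdif} as $u_t=-\on{div}(v\nabla H)+\on{div}\bigl(\frac{\nabla u\otimes\nabla u}{v}\nabla H\bigr)$ and applies $v\nabla H=\nabla(vH)-H\nabla v$ to the first piece and $A^{ij}\nabla_j H=\nabla_j(A^{ij}H)-H\nabla_j A^{ij}$, $A^{ij}=\frac{\nabla_i u\nabla_j u}{v}$, to the second, so that $-\on{div}(v\nabla H)=-\Delta(vH)+\on{div}(H\nabla v)=-\Delta^2 u+\nabla^2_{ij}f_2^{ij}[u]+\nabla_i f_1^i[u]$ (reusing $\Delta(vH)=\Delta^2 u-\Delta(\frac{\nabla v\nabla u}{v})$ from Lemma \ref{rewr}) and $\on{div}\bigl(\frac{\nabla u\otimes\nabla u}{v}\nabla H\bigr)=\nabla^2_{ij}\bigl(\frac{\nabla_i u\nabla_j u}{v}H\bigr)-\nabla_i\bigl(H\nabla_j\frac{\nabla_i u\nabla_j u}{v}\bigr)$; every term is then manifestly of the form $f_0+\nabla f_1+\nabla^2 f_2$ and no third-order integration-by-parts trick is needed at all. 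Your factorization through $(I-\frac{\nabla u\otimes\nabla u}{v^2})\nabla(vH)$ also reaches the goal, but one citation is off: the residual piece $\on{div}\bigl(\frac{(\nabla u\cdot\nabla(vH))\nabla u}{v^2}\bigr)$ is not of the type of the terms $I$ and $II$ in the proof of Lemma \ref{rewr} --- there the third derivative $\nabla\Delta u$ is contracted against a factor $\nabla v$ containing $\nabla^2 u$, which is precisely what makes the identity $\nabla_k u\,\nabla^3_{ijk}u\,\nabla^2_{ij}u=\tfrac12\nabla_k u\,\nabla_k|\nabla^2 u|^2$ available, whereas in your term the third-order factor is multiplied only by first derivatives of $u$. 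The correct treatment, which also appears in Lemma \ref{rewr} (the last display of its proof), is to peel off one more derivative, $\frac{\nabla_i u\nabla_j u}{v^2}\nabla_j(vH)=\nabla_j\bigl(\frac{\nabla_i u\nabla_j u}{v}H\bigr)-vH\,\nabla_j\bigl(\frac{\nabla_i u\nabla_j u}{v^2}\bigr)$, which yields a $\nabla^2 f_2$ plus $\nabla f_1$ contribution rather than the $f_0+\nabla f_1$ form you assert; since \eqref{rewr1} allows all three pieces, this imprecision does not affect the conclusion, and with that correction your argument (as well as your alternative route via the difference of \eqref{sdif} and \eqref{wflow}, whose extra non-divergence term carries the needed $\nabla^2 u$ factor through $\nabla v$) goes through.
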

\begin{proof}
We write
\begin{align*}
u_t=&-\on{div}(v \nabla H)+\on{div}\Big(\frac{\nabla u \otimes \nabla u}{v}\nabla H\Big)\\
=& I+II.
\end{align*}
Next we use the calculations from Lemma \ref{rewr} in order to conclude 
\begin{align*}
I=&-\Delta (v H)+\on{div} (\nabla v H)\\
=&-\Delta^2 u +\nabla^2_{ij} f^{ij}_2[u]+\nabla_i f^i_1[u].
\end{align*}
Moreover, we observe
\begin{align*}
II=& \Delta \Big(\frac{\nabla u \otimes \nabla u}{v} H\Big)-\on{div}\Big( H\nabla (\frac{\nabla u \otimes \nabla u}{v})\Big)\\
=&\nabla^2_{ij} f^{ij}_2[u]+\nabla_i f^i_1[u]
\end{align*}
and this finishes the proof of the Lemma.
\end{proof}
The fact that we can rewrite the graphical surface diffusion flow in the form \eqref{rewr1} allows to conclude the following result (compare with Theorem \ref{main}).
\begin{theorem}\label{mainsdif}
  There exists $\varepsilon>0$, $C>0$ such that for every $u_0\in
  C^{0,1}(\R^n)$ satisfying $||u_0||_{C^{0,1}(\R^n)}<\varepsilon$
  there exists an analytic solution $u\in X_\infty$ of the surface diffusion flow \eqref{sdif} with
  $u(\cdot,0)=u_0$ which satisfies $||u||_{X_\infty}\le C||u_0||_{C^{0,1}(\R^n)}$. The solution is unique in the ball $B^{X_\infty}(0,C\eps)$.

More precisely there exists $R>0$, $c>0$ such that for every $k\in \N_0$ and multiindex $\alpha\in \N_0^n$ we have the estimate
\begin{align}
  \sup_{x\in \R^n}\sup_{ t>0} |(t^{\frac{1}{4}} \nabla)^\alpha (t\partial_t)^k \nabla u(x,t)|\le c||u_0||_{C^{0,1}(\R^n)}R^{|\alpha|+k}(|\alpha|+k)!. \label{anasdif}
\end{align}  
Moreover the solution $u$ depends analytically on $u_0$. 
\end{theorem}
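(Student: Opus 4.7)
The plan is to run the same fixed-point argument used for the Willmore flow in Theorem \ref{main}, since by Lemma \ref{rewrsdiff} the surface diffusion equation \eqref{sdif} has exactly the form \eqref{rewr1} with the same structural nonlinearities $f_0[u]$, $f_1[u]$, $f_2[u]$. Writing \eqref{sdif} in Duhamel form against the biharmonic heat kernel, one has
\[
u = F_{u_0}(u) := Su_0 + Vf[u],
\]
and the strategy is to show that $F_{u_0}$ is a contraction on the ball $B^{X_\infty}(0,C\eps) \subset X_\infty$ provided $\|u_0\|_{C^{0,1}(\R^n)} < \eps$.

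First, I would bound the linear piece $Su_0$ in $X_\infty$ using Lemma \ref{homog}, which directly gives $\|Su_0\|_{X_\infty} \le c\|u_0\|_{C^{0,1}(\R^n)}$. Next, the nonlinear piece $Vf[u]$ must be estimated in $X_\infty$. Because $f_0$ is trilinear in $\nabla^2 u$, $f_1$ is bilinear in $\nabla^2 u$, and $f_2$ is linear in $\nabla^2 u$ (with coefficients that are analytic functions of $\nabla u$ bounded as long as $\|\nabla u\|_\infty$ is small), the key multilinear estimates on the operator $V$ using the kernel bounds \eqref{kernel2}--\eqref{kernel2a} and absorbing the $\nabla_i$ and $\nabla^2_{ij}$ onto the kernel yield
\[
\|Vf[u]\|_{X_\infty} \le c\bigl(\|u\|_{X_\infty}^3 + \|u\|_{X_\infty}^2 + \|u\|_{X_\infty}\bigr)\cdot \|u\|_{X_\infty}^2,
\]
with analogous Lipschitz bounds for differences $Vf[u]-Vf[\tilde u]$. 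These are literally the same estimates carried out for Theorem \ref{main}, so no new work is required. Choosing $\eps$ small enough that $F_{u_0}$ maps $B^{X_\infty}(0,C\eps)$ into itself and is contractive gives existence and uniqueness in the small ball.

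For the derivative bounds \eqref{anasdif} and analytic dependence on $u_0$, I would invoke the Angenent trick as in Theorem \ref{main}: for each complex $\lambda$ in a neighborhood of $1$ and each unit vector $e$, consider the transformed equation satisfied by $v(x,t) = u(\lambda x + t^{1/4}e, \lambda^4 t)$, which for small complex $\lambda$ is again a small perturbation of the biharmonic heat equation with the same nonlinear structure. The fixed-point map depends holomorphically on the parameters $(\lambda,e)$ and on $u_0$, so the solution does as well, and Cauchy estimates on a polydisc translate into the claimed bounds on $(t^{1/4}\nabla)^\alpha (t\partial_t)^k \nabla u$ with factorial growth in $|\alpha|+k$.

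The main obstacle is conceptual rather than technical: everything is reduced to checking that Lemma \ref{rewrsdiff} really produces nonlinearities of the same $f_0,f_1,f_2$ type so that the multilinear estimates on $V$ in the space $X_\infty$ carry over unchanged. Once this is observed, the proof of Theorem \ref{main} applies verbatim, and the smallness threshold $\eps$ and constant $C$ may need to be readjusted only to absorb the slightly different numerical coefficients appearing in the surface diffusion nonlinearity.
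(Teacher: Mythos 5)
Your proposal is correct and follows essentially the same route as the paper: the paper proves Theorem \ref{mainsdif} precisely by observing (Lemma \ref{rewrsdiff}) that the graphical surface diffusion flow has the form \eqref{rewr1}, so the fixed-point machinery of Theorem \ref{main} (Lemma \ref{homog} for $Su_0$, Lemmas \ref{yx} and \ref{xy} for $Vf$, and the Angenent-type implicit-function-theorem argument for analyticity and \eqref{anasdif}) applies unchanged. The only cosmetic point is that the nonlinear bound is simply $\|Vf[u]\|_{X_\infty}\le c\|u\|_{X_\infty}^3$ rather than the mixed powers you wrote, but this does not affect the contraction argument.
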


\subsection{Model case}
Before studying the general equation \eqref{rewr1} we study solutions of the simplified problem (which might be of independent interest) 
\begin{align}
u_t + \Delta^2 u  = f_0[u] + \nabla f_1[u]=f_M[u], \label{simply}
\end{align}  
where $f_0[u]$ and $f_1[u]$ are as in Lemma \ref{rewr}. In this case we define for every $0< T \le \infty$ the Banach space $X_{M,T}$ by
\begin{align*} 
X_{M,T}=\{u|\ \ || u ||_{X_{M,T}} := \sup_{0<t< T}|| \nabla u(t)||_{L^\infty(\R^n)} +\sup_{0<t< T}t^{\frac{1}{4}} ||  \nabla^2 u (t)||_{L^\infty(\R^n)}<\infty\}. 
\end{align*}
Moreover, we let
\[
Y_{0,M,T}=\{f_0|\ \ || f_0 ||_{Y_{0,M,T}} = \sup_{0<t<T} t^{\frac{3}{4}} ||f_0(t) ||_{L^\infty(\R^n)}<\infty\}
\]
and 
\[
Y_{1,M,T}=\{f_1|\ \ || f_1 ||_{Y_{1,M,T}} = \sup_{0<t<T} t^{\frac{1}{2}} ||f_1(t) ||_{L^\infty(\R^n)}<\infty\}. 
\]
Finally we define the norm
\[
||f||_{Y_{M,T}}=\inf \{|| f_0 ||_{Y_{0,M,T}}+|| f_1 ||_{Y_{1,M,T}}|\ \ f_0 \in Y_{0,M,T}, f_1 \in Y_{1,M,T}, \ \ f=f_0+\nabla f_1\}
\]
and the Banach space
\[
Y_{M,T}=\{ f|\ \ ||f||_{Y_{M,T}}<\infty\}.
\]

Our main goal in this subsection is to prove the following Theorem.
\begin{theorem}\label{it}
Let $0<T\le \infty$, $u \in X_{M,T}$ and $f_M[u] =f_0[u]+\nabla f_1[u]$, where $f_0[u]$ and $f_1[u]$ are as in Lemma \ref{rewr}. Then the map $F_M:C^{0,1}(\R^n)\times X_{M,T} \rightarrow X_{M,T}$, defined by $F_M(u_0,u)=Su_0+Vf_M[u]$ is analytic and we have 
\begin{align}
||F_M(u_0,u)||_{X_{M,T}}\le c(||u_0||_{C^{0,1}(\R^n)}+||u||_{X_{M,T}}^3).\label{itbm}
\end{align}
Moreover, there exists $\varepsilon_0>0$ and $q<1$ such that for all $u_0\in C^{0,1}(\R^n)$ and all $u_1,u_2\in X_{M,T}^{\varepsilon_0}=\{u\in X_{M,T}|\,\ ||u||_{X_{M,T}}<\varepsilon_0\}$ we have
\begin{align}
||F_M(u_0,u_1)-F_M(u_0,u_2)||_{X_{M,T}}\le q||u_1-u_2||_{X_{M,T}}. \label{contractionm}
\end{align}
\end{theorem}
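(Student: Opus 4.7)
The plan is to decompose $F_M(u_0,u)=Su_0+Vf_M[u]$ and bound each summand. For the linear part, Lemma~\ref{homog} gives directly $\|Su_0\|_{X_{M,T}}\le c\|u_0\|_{C^{0,1}(\R^n)}$, since the norm in $X_{M,T}$ is weaker than (or equal to) the norm controlled by \eqref{homog1} (and independent of $T$). The nonlinear part reduces to two separate steps: first a pointwise cubic bound on $f_M[u]$ in the $Y_{M,T}$-norm, then a mapping estimate $V\colon Y_{M,T}\to X_{M,T}$.

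For the cubic estimate on $f_M[u]=f_0[u]+\nabla f_1[u]$, the key point is that $v=\sqrt{1+|\nabla u|^2}\ge 1$ and $|\nabla u|\le v$, so for every $k\ge 1$ one has $v^{-2k}|P_{2k-2}(\nabla u)|\le v^{-2}\le 1$ and $v^{-2k}|P_{2k-1}(\nabla u)|\le v^{-1}|\nabla u|\le |\nabla u|$. Together with the structure given in Lemma~\ref{rewr} this yields the pointwise bounds
\begin{align*}
|f_0[u]|\le c\,|\nabla^2 u|^3,\qquad |f_1[u]|\le c\,|\nabla^2 u|^2\,|\nabla u|.
\end{align*}
Inserting the definition of $\|\cdot\|_{X_{M,T}}$ and multiplying by $t^{3/4}$ and $t^{1/2}$ respectively produces
\begin{align*}
\|f_0[u]\|_{Y_{0,M,T}}+\|f_1[u]\|_{Y_{1,M,T}}\le c\,\|u\|_{X_{M,T}}^3,
\end{align*}
so $\|f_M[u]\|_{Y_{M,T}}\le c\,\|u\|_{X_{M,T}}^3$.

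Next, one estimates $V$ on the two decomposition spaces. Writing the $\nabla f_1$ contribution as a convolution of $\nabla b$ against $f_1$ (moving the derivative via $\nabla_y b(x-y)=-\nabla_x b(x-y)$) and using \eqref{kernel2a} yields, for $i\in\{1,2\}$, the bound $\|\nabla^i b(\cdot,\tau)\|_{L^1}\le c\tau^{-i/4}$, and similarly for $\nabla^3 b$. Young's convolution inequality then gives bounds of the form
\begin{align*}
|\nabla Vf_0(x,t)|+|\nabla V(\nabla f_1)(x,t)|\le c\|f_0\|_{Y_0}\int_0^t\!(t-s)^{-\frac14}s^{-\frac34}ds+c\|f_1\|_{Y_1}\int_0^t\!(t-s)^{-\frac12}s^{-\frac12}ds,
\end{align*}
both being Beta integrals of finite value and independent of $t$. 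For the second-derivative estimate one splits $\int_0^t=\int_0^{t/2}+\int_{t/2}^t$: on $[0,t/2]$ the kernel factor is bounded by $c t^{-(i+1)/4}$ and the data factor is integrable in $s$, while on $[t/2,t]$ the roles reverse and $s\approx t$. In both regimes the scaling combines to $c\,t^{-1/4}$, so $t^{1/4}\|\nabla^2 Vf_M[u]\|_{L^\infty}\le c\|f_M[u]\|_{Y_{M,T}}$. Combining the two steps yields \eqref{itbm}.

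For the contraction \eqref{contractionm}, observe that $f_M$ is polynomial in $\nabla^2 u$ of degree at most three and rational in $\nabla u$ (through $v^{-2k}$), hence smooth as a map $X_{M,T}\to Y_{M,T}$; repeating the argument above for the multilinear differences yields
\begin{align*}
\|f_M[u_1]-f_M[u_2]\|_{Y_{M,T}}\le c\bigl(\|u_1\|_{X_{M,T}}+\|u_2\|_{X_{M,T}}\bigr)^2\|u_1-u_2\|_{X_{M,T}},
\end{align*}
so in a ball of radius $\varepsilon_0$ the Lipschitz constant is at most $c\varepsilon_0^2$, which is $q<1$ for $\varepsilon_0$ small. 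Analyticity of $F_M$ then follows because $v^{-2k}=(1+|\nabla u|^2)^{-k}$ admits a convergent power series expansion in $\nabla u$ around any base point (the radius of convergence being positive since $v\ge 1$), and therefore $f_M[u]$ has a convergent multilinear expansion in $u$, each term of which is bounded in $Y_{M,T}$ by the same scaling computations as above; composing with the bounded linear operator $V$ and adding the linear map $Su_0$ preserves analyticity. The main technical obstacle is the careful bookkeeping of the $v^{-2k}P_j(\nabla u)$ factors to extract the correct $\|u\|_{X}^3$ scaling (rather than a merely quadratic one), together with the two-regime splitting of the convolution integral needed to absorb the simultaneous singularities at $s=0$ and $s=t$ in the $\nabla^2 Vf$ bound.
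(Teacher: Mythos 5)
Your proposal is correct and follows essentially the same route as the paper: split $F_M(u_0,u)=Su_0+Vf_M[u]$, control $Su_0$ by Lemma \ref{homog}, prove the cubic bound $\|f_M[u]\|_{Y_{M,T}}\le c\|u\|_{X_{M,T}}^3$ from the structure in Lemma \ref{rewr} (the paper's Lemma \ref{YX}), bound $V\colon Y_{M,T}\to X_{M,T}$ via the kernel estimates of Lemma \ref{kernel} (the paper's Lemma \ref{XY}), and get the contraction and analyticity from the multilinear/power-series structure of $f_M$. The only cosmetic difference is that you verify the $V$-bound directly at an arbitrary $(x,t)$ with the $L^1$ kernel bounds \eqref{kernel2a} and Beta-function time integrals, whereas the paper first uses scaling and translation invariance to reduce to the point $(0,1)$ and then integrates the pointwise bounds \eqref{kernel2}; the two computations are equivalent.
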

The Theorem will be a consequence of the next two results and Lemma \ref{homog}.
\begin{lemma}\label{YX}
For every $0<T\le \infty$ the map $f_M[\cdot]=f_0[\cdot]+\nabla f_1[\cdot]:X_{M,T}\rightarrow Y_{M,T}$, where $f_0$ and $f_1$ are as in Lemma \ref{rewr}, is analytic. Moreover we have the estimates 
\begin{align}
||f_M[u]||_{Y_{M,T}}\le& c||u||_{X_{M,T}}^3\label{YX1}
\end{align} 
for every $u\in X_{M,T}$ and
\begin{align} 
||f_M[u_1]-&f_M[u_2]||_{Y_{M,T}}\nonumber \\
&\le c(||u_1||_{X_{M,T}},||u_2||_{X_{M,T}})(||u_1||_{X_{M,T}}+||u_2||_{X_{M,T}})||u_1-u_2||_{X_{M,T}} \label{YX2}
\end{align}
for all $u_1, u_2\in X_{M,T}$. 
\end{lemma}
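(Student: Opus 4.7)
The plan is to reduce \eqref{YX1} and \eqref{YX2} to elementary pointwise arithmetic on the monomials produced by Lemma \ref{rewr} and then to deduce analyticity from a binomial power series.

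First I would record two pointwise estimates, uniform in $\nabla u\in\R^n$:
\[
|v^{-2k}P_{2k-2}(\nabla u)|\le C,\qquad |v^{-2k}P_{2k-1}(\nabla u)|\le C|\nabla u|.
\]
Both follow from the elementary inequality $r^{j}(1+r^2)^{-k}\le C$ for $j\le 2k$, with an extra factor $r$ available when $j=2k-1$ via $r^{2k-1}/(1+r^2)^k= r\cdot r^{2k-2}/(1+r^2)^k\le r$. Combined with the multilinear structure of $f_0,f_1$ from Lemma \ref{rewr}, this yields the pointwise bounds $|f_0[u]|\le C|\nabla^2 u|^3$ and $|f_1[u]|\le C|\nabla u|\,|\nabla^2 u|^2$. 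Multiplying by $t^{3/4}$ and $t^{1/2}$ respectively and using $\|\nabla u(t)\|_\infty,\, t^{1/4}\|\nabla^2 u(t)\|_\infty\le\|u\|_{X_{M,T}}$ gives
\[
\|f_0[u]\|_{Y_{0,M,T}}+\|f_1[u]\|_{Y_{1,M,T}}\le C\|u\|_{X_{M,T}}^3,
\]
and taking the infimum in the definition of $\|\cdot\|_{Y_{M,T}}$ establishes \eqref{YX1}.

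For the Lipschitz estimate \eqref{YX2} I would write $f_i[u_1]-f_i[u_2]$ as a telescoping sum: in every monomial of Lemma \ref{rewr}, replace one factor at a time by the corresponding $u_1-u_2$ difference, leaving the other factors as $u_1$ or $u_2$. Each coefficient factor $v^{-2k}P_j(\nabla u)$ is a $C^1$ function of $\nabla u\in\R^n$ whose differential is bounded on balls of radius $M$ by a polynomial $P(M)$, so the mean value theorem yields $|v_1^{-2k}P_j(\nabla u_1)-v_2^{-2k}P_j(\nabla u_2)|\le P(M)|\nabla(u_1-u_2)|$ with $M=\max(\|\nabla u_1\|_\infty,\|\nabla u_2\|_\infty)$. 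Collecting the contributions, multiplying by $t^{3/4}$ or $t^{1/2}$, and applying the $X_{M,T}$-weights as above produces exactly the stated form $c(\|u_1\|_{X_{M,T}},\|u_2\|_{X_{M,T}})(\|u_1\|_{X_{M,T}}+\|u_2\|_{X_{M,T}})\|u_1-u_2\|_{X_{M,T}}$, where the surviving monomial accounts for the factor $\|u_1\|+\|u_2\|$ and the polynomial $P(M)$ is absorbed into $c$.

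Finally, for analyticity I would expand
\[
v^{-2k}=(1+|\nabla u|^2)^{-k}=\sum_{j=0}^\infty \binom{-k}{j}|\nabla u|^{2j},
\]
which converges absolutely and uniformly on any ball $\{|\nabla u|\le r\}$ with $r<1$. Substituting into $f_0,f_1$ presents them as absolutely convergent series of homogeneous polynomials in the bounded linear maps $u\mapsto\nabla u$ and $u\mapsto t^{1/4}\nabla^2 u$ from $X_{M,T}$ into $L^\infty(\R^n\times(0,T))$. Each such bounded multilinear form is automatically analytic between Banach spaces, and the Weierstrass M-test promotes the whole series to an analytic map $f_M:B^{X_{M,T}}(0,r_0)\to Y_{M,T}$ for a small $r_0>0$. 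There is no single hard step; the main obstacle is keeping the weighted-norm bookkeeping consistent: the tight matching $3/4=3\cdot 1/4$ and $1/2=2\cdot 1/4$ between the $Y$-exponents and the number of $\nabla^2 u$ factors is precisely what makes the cubic estimate \eqref{YX1} close without slack.
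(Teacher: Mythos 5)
Your proposal is correct and follows essentially the same route as the paper: uniform pointwise bounds on the coefficient functions of $\nabla u$, the weighted-norm bookkeeping $t^{3/4}=(t^{1/4})^3$, $t^{1/2}\cdot(t^{1/4})^2$ for \eqref{YX1}, a difference (mean value) estimate on the factors $v^{-2k}P_j(\nabla u)$ for \eqref{YX2}, and a power series expansion of $v^{-2k}$ to get analyticity, which is exactly the paper's (much terser) argument. The only caveat is that your binomial expansion about $0$ yields analyticity only on the ball $\{\|u\|_{X_{M,T}}<1\}$ (expanding about an arbitrary base point, where $(1+s)^{-k}$ has radius of convergence $1+s_0>1$, removes this restriction), but this is exactly the level of detail of the paper's proof and all that the fixed-point argument uses.
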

\begin{proof}
Polynomial maps are analytic. We expand the functions of $\nabla u$ into power series. This yields a convergent power series expansion of $f_0$ and $f_1$ in the function spaces.

In order to see \eqref{YX2} we notice that for every $i\in \N$ 
\[
|v_1^{-2i}-v_2^{-2i}| \le c(|\nabla u_1|,|\nabla u_2|)v_1^{-2i}v_2^{-2i} |\nabla u_1-\nabla u_2|,
\]
where $v_j=\sqrt{1+|\nabla u_j|^2}$, $j\in \{1,2\}$.
\end{proof} 
\begin{lemma}\label{XY}
Let $0<T\le \infty$ and let $f_0+\nabla f_1=f_M\in Y_{M,T}$. Then $Vf_M\in X_{M,T}$ and we have the estimate 
\begin{align}
||Vf_M||_{X_{M,T}}\le c||f_M||_{Y_{M,T}}.\label{XY1}
\end{align}
\end{lemma}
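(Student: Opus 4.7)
The plan is to estimate $\nabla V f_M$ and $t^{1/4}\nabla^2 V f_M$ pointwise in $L^\infty$ for an arbitrary admissible decomposition $f_M = f_0 + \nabla f_1$, and then take the infimum over such decompositions to obtain the stated bound in terms of $\|f_M\|_{Y_{M,T}}$. The only tools needed are the $L^1$ estimates on the derivatives of the biharmonic heat kernel from Lemma~\ref{kernel}, namely $\|\nabla^k b(\cdot,t)\|_{L^1(\R^n)}\le c\, t^{-k/4}$, together with a standard integration by parts.

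First I would rewrite the contribution of $\nabla f_1$ by integrating by parts in the $y$-variable, so that the extra derivative is transferred onto the kernel; this is legitimate because $b$ and its derivatives decay rapidly in $y$. The net effect is
\begin{align*}
Vf_M(x,t) = \int_0^t\!\!\int_{\R^n} b(x-y,t-s)\,f_0(y,s)\,dy\,ds + \int_0^t\!\!\int_{\R^n} \nabla_x b(x-y,t-s)\, f_1(y,s)\,dy\,ds.
\end{align*}
Differentiating under the integral sign and applying Young's convolution inequality in the space variable then reduces the $L^\infty$-norm of $\nabla^j V f_M(\cdot,t)$ to a purely temporal integral. For instance, for $\nabla V f_M$ one obtains, using $\|f_0(s)\|_\infty\le s^{-3/4}\|f_0\|_{Y_{0,M,T}}$ and $\|f_1(s)\|_\infty\le s^{-1/2}\|f_1\|_{Y_{1,M,T}}$, the bound
\begin{align*}
\|\nabla V f_M(t)\|_\infty \le c\|f_0\|_{Y_{0,M,T}}\!\!\int_0^t (t-s)^{-1/4} s^{-3/4}\,ds + c\|f_1\|_{Y_{1,M,T}}\!\!\int_0^t (t-s)^{-1/2} s^{-1/2}\,ds,
\end{align*}
and analogously for $\nabla^2 V f_M$ one gets the integrals $\int_0^t (t-s)^{-1/2}s^{-3/4}ds$ and $\int_0^t(t-s)^{-3/4} s^{-1/2}ds$.

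The final step is to recognize each of these time integrals as a Beta function via the substitution $s = t\tau$. The exponents are arranged so that every Beta function has strictly positive parameters: for $\nabla V f_M$ one obtains the scale-invariant factors $B(\tfrac14,\tfrac34)$ and $B(\tfrac12,\tfrac12)$, hence $\|\nabla V f_M(t)\|_\infty\le c(\|f_0\|_{Y_{0,M,T}} + \|f_1\|_{Y_{1,M,T}})$, while for $\nabla^2 V f_M$ the substitution yields the factor $t^{-1/4}$ times $B(\tfrac14,\tfrac12)$ or $B(\tfrac12,\tfrac14)$, so that $t^{1/4}\|\nabla^2 V f_M(t)\|_\infty$ is likewise bounded by the sum of the two component norms. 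Taking the infimum over decompositions $f_M = f_0 + \nabla f_1$ yields \eqref{XY1}.

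There is no real obstacle here; the main thing to verify is that the norms chosen for $Y_{0,M,T}$ and $Y_{1,M,T}$ carry exactly the right homogeneity so that each temporal convolution is a scale-invariant Beta integral in the case of $\nabla V f_M$, and produces precisely the factor $t^{-1/4}$ in the case of $\nabla^2 V f_M$. This matching of scaling exponents is the reason the decomposition norm $\|\cdot\|_{Y_{M,T}}$ was set up as it was, and it is what makes the linear map $V:Y_{M,T}\to X_{M,T}$ continuous with a constant independent of $T$.
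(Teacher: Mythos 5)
Your proposal is correct and follows essentially the same route as the paper: the derivative on $f_1$ is moved onto the kernel, the spatial integration is handled by the kernel bounds of Lemma~\ref{kernel} (your use of the $L^1$ bounds \eqref{kernel2a} via Young's inequality is equivalent to the paper's integration of the pointwise bounds \eqref{kernel2}), and the remaining time integrals are the same convergent Beta-type integrals. The only cosmetic difference is that the paper first reduces by translation and scaling invariance to estimating $\nabla Vf_M$ and $\nabla^2 Vf_M$ at the point $(0,1)$, whereas you carry the homogeneity explicitly through the computation for general $t$.
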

\begin{proof}
Since the estimate \eqref{XY1} is invariant under translations and the scaling defined in \eqref{scaling} it suffices to show that
\begin{align*}
|\nabla Vf_M(0,1)|+|\nabla^2 Vf_M(0,1)|\le c ||f_M||_{Y_{M,T}}
\end{align*}
for $T>1$. Using the definition of the operator $V$ and Lemma \ref{kernel} we estimate
\begin{align*}
|\nabla Vf_M(0,1)|\le& c\int_0^1 \int_{\R^n}(|\nabla b(-y,1-s)||f_0(y,s)|\\
&+|\nabla^2 b(-y,1-s)|f_1(y,s)|)dy ds\\
\le& c||f_0||_{Y_{0,M,T}}\int_0^1 \int_{\R^n}s^{-\frac{3}{4}}\Big((1-s)^{\frac{1}{4}}+|y|\Big)^{-n-1}dy ds\\
&+c||f_1||_{Y_{1,M,T}}\int_0^1 \int_{\R^n}s^{-\frac{1}{2}}\Big((1-s)^{\frac{1}{4}}+|y|\Big)^{-n-2}dy ds\\
\le& c||f_M||_{Y_{M,T}}.
\end{align*}
Arguing similarly for $|\nabla^2 Vf_M(0,1)|$  we get
\begin{align*}
|\nabla^2 Vf_M(0,1)|\le& c\int_0^1 \int_{\R^n}\Big((1-s)^{\frac{1}{4}}+|y|\Big)^{-n-2}|f_0(y,s)|dy ds\\
&+c\int_0^1 \int_{\R^n}\Big((1-s)^{\frac{1}{4}}+|y|\Big)^{-n-3}|f_1(y,s)|dy ds.
\end{align*}
Estimating the integrals as above we get the desired bound for $|\nabla^2 Vf_M(0,1)|$.
\end{proof}

\subsection{General case}
For the general case of a solution of \eqref{rewr1} we have to include the term $\nabla^2 f_2[u]$ into our analysis. In order to do this we need to modify our function spaces while we still want them to be invariant under the scaling defined in \eqref{scaling}.
\begin{definition}\label{space}
For every $0<T\le \infty$ we define the function spaces $X_T$ and $Y_T$ by
\begin{align*}
 X_T=& \{u | \sup_{0<t<T} || \nabla u(t) ||_{L^\infty(\R^n)} +  \sup_{x\in \R^n} \sup_{0<R^4< T} 
R^{\frac{2}{n+6}}  || \nabla^2 u||_{L^{n+6}(B_R(x)\times (\frac{R^4}{2},R^4))} \\
&<\infty \} \ \ \text{and} \\
Y_T=&  Y_{0,T} + \nabla  Y_{1,T} + \nabla^2 Y_{2,T},
\end{align*}
where
\begin{align*}
 ||f_0||_{Y_{0,T}} &=    \sup_{x\in \R^n} \sup_{0<R^4< T} R^{\frac{6}{n+6}} || f_0||_{L^{\frac{n+6}{3}}(B_R(x)\times (\frac{R^4}{2},R^4))}, \\ 
||f_1||_{Y_{1,T}} &= \sup_{x\in \R^n} \sup_{0<R^4< T}  R^{\frac{4}{n+6}} || f_1||_{L^{\frac{n+6}{2}}(B_R(x)\times (\frac{R^4}{2},R^4))} \ \ \ \text{and} \\
||f_2||_{Y_{2,T}}&= \sup_{x\in \R^n} \sup_{0<R^4< T} R^{\frac{2}{n+6}}|| f_2||_{L^{n+6}(B_R(x)\times (\frac{R^4}{2},R^4))}.
\end{align*}
\end{definition}
As in the previous subsection our goal is to prove the following Theorem.
\begin{theorem}\label{ita}
Let $0<T\le \infty$, $u \in X_{T}$ and let $f[u] =f_0[u]+\nabla f_1[u]+ \nabla^2 f_2[u]$, where $f_0[u]$, $f_1[u]$ and $f_2[u]$ are as in Lemma \ref{rewr}. Then the map $F:C^{0,1}(\R^n)\times X_{T} \rightarrow X_{T}$, defined by $F(u_0,u)=Su_0+Vf[u]$ is analytic and we have \begin{align}
||F(u_0,u)||_{X_{T}}\le c(||u_0||_{C^{0,1}(\R^n)}+||u||_{X_{T}}^3).\label{itb}
\end{align}
Moreover, there exists $\varepsilon_0>0$ and $q<1$ such that for all $u_0\in C^{0,1}(\R^n)$ and all $u_1,u_2\in X_{T}^{\varepsilon_0}=\{u\in X_{T}|\,\ ||u||_{X_{T}}<\varepsilon_0\}$ we have
\begin{align}
||F(u_0,u_1)-F(u_0,u_2)||_{X_{T}}\le q||u_1-u_2||_{X_{T}}. \label{contraction}
\end{align}
\end{theorem}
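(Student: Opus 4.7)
The plan is to write $F(u_0,u) = Su_0 + Vf[u]$ and combine three ingredients: the linear homogeneous estimate $\|Su_0\|_{X_T} \le c\|u_0\|_{C^{0,1}(\R^n)}$, which follows from Lemma \ref{homog}; a nonlinearity bound analogous to Lemma \ref{YX} asserting that $u \mapsto f[u]$ is analytic from $X_T$ to $Y_T$ with $\|f[u]\|_{Y_T} \le c\|u\|_{X_T}^3$ and the corresponding Lipschitz estimate $\|f[u_1]-f[u_2]\|_{Y_T} \le c(\|u_1\|_{X_T}+\|u_2\|_{X_T})^2 \|u_1-u_2\|_{X_T}$; and an analog of Lemma \ref{XY} asserting $\|Vf\|_{X_T} \le c\|f\|_{Y_T}$. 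Given these, \eqref{itb} is immediate. The contraction \eqref{contraction} follows by writing $F(u_0,u_1)-F(u_0,u_2) = V(f[u_1]-f[u_2])$, applying the two linear bounds, and choosing $\varepsilon_0$ small enough. Analyticity is a consequence of the term-by-term convergence of the power series expansion of $f_0, f_1, f_2$ in $\nabla u \in L^\infty$, carried out inside $Y_T$.

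The nonlinearity estimate is essentially bookkeeping once one observes that the exponents in Definition \ref{space} were chosen precisely so that H\"older's inequality on the parabolic cylinder $B_R(x)\times(R^4/2,R^4)$ matches the homogeneity of $f_0,f_1,f_2$ in $\nabla^2 u$. Since $f_0$ is cubic, $f_1$ quadratic and $f_2$ linear in $\nabla^2 u$, H\"older with the triple $((n+6)/3,(n+6)/2,n+6)$ converts the Morrey bound on $\nabla^2 u$ into the prescribed bounds in $Y_{0,T},Y_{1,T},Y_{2,T}$, with the powers of $R$ balancing exactly. The coefficient $v^{-2k}P_m(\nabla u)$ is real-analytic in $\nabla u$ and uniformly bounded on bounded sets of $X_T$, which yields analyticity and the difference estimate via the identity $a^3-b^3=(a-b)(a^2+ab+b^2)$ applied termwise.

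For the linear bound on $V$, translation invariance and the scaling \eqref{scaling} reduce matters to estimating $|\nabla Vf(0,1)|$ and $\|\nabla^2 Vf\|_{L^{n+6}(B_1(0)\times(1/2,1))}$ for $T>1$. Integration by parts transfers the spatial derivatives in $\nabla f_1,\nabla^2 f_2$ onto the kernel $b$. The pointwise bound on $\nabla Vf(0,1)$ then follows by dyadically decomposing $\R^n\times(0,1)$ into parabolic cylinders, applying H\"older on each one to convert the Morrey norm of $f_i$ into a pointwise convolution against $\nabla^{1+i}b$, and summing the resulting geometric series using \eqref{kernel2}. The main obstacle is the $L^{n+6}$ estimate for $\nabla^2 Vf$, and in particular for the parabolic singular integral $\nabla^4 Vf_2$. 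The plan is a local--nonlocal split: on a doubled parabolic cylinder $Q_2$ centered at $(0,1)$, the local piece $\nabla^4 V(f_2\chi_{Q_2})$ is handled by the standard $L^{n+6}$ maximal regularity for the biharmonic heat equation (a Mikhlin multiplier argument), which bounds it in terms of $\|f_2\|_{L^{n+6}(Q_2)}$ and hence the Morrey norm. The nonlocal piece is controlled by the pointwise decay \eqref{kernel2} of $\nabla^4 b$, estimated via H\"older on dyadic annuli of parabolic cylinders and summed as a geometric series. Combining the two yields the desired Morrey-type bound and closes the fixed-point argument.
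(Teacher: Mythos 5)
Your proposal is correct and follows essentially the same route as the paper: the splitting $F(u_0,u)=Su_0+Vf[u]$ with Lemma \ref{homog} for $S$, a H\"older/power-series argument giving the cubic and Lipschitz bounds for $f[\cdot]:X_T\to Y_T$ (the paper's Lemma \ref{yx}), and the estimate $\|Vf\|_{X_T}\le c\|f\|_{Y_T}$ (Lemma \ref{xy}) proved by scaling reduction, a near/far (dyadic) decomposition using the kernel decay, Young/H\"older for the subcritical convolutions, and $L^{n+6}$ boundedness of the critical operator $f_2\mapsto\nabla^4 b\ast f_2$ on compactly supported data. The only cosmetic difference is that you quote this last fact as maximal $L^{n+6}$ regularity via a Mikhlin-type multiplier argument, whereas the paper derives it from an energy estimate combined with Calder\'on--Zygmund theory on the parabolic space of homogeneous type; these are interchangeable standard facts.
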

The Theorem will be a consequence of the following two results and Lemma \ref{homog}.
\begin{lemma}\label{yx}
For every $0<T\le \infty$ the operator $f[\cdot]=(f_0+\nabla f_1+\nabla^2 f_2)[\cdot]:X_T\rightarrow Y_T$, where $f_0$, $f_1$ and $f_2$ are as in Lemma \ref{rewr}, is analytic and we have the estimates
\begin{align}
 ||f[u]||_{Y_T} \le c||u||_{X_T}^3 \label{estyx}
\end{align}
for all $u\in X_T$ and
\begin{align}
||f[u_1]-&f[u_2]||_{Y_T} \le c(||u_1||_{X_T},||u_2||_{X_T})(||u_1||_{X_T}+||u_2||_{X_T})||u_1-u_2||_{X_T} \label{estyxa}
\end{align}
for all $u_1,u_2 \in X_T$.
\end{lemma}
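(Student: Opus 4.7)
The plan is to reduce both \eqref{estyx} and \eqref{estyxa} to Hölder's inequality on the parabolic cylinders $Q_R(x):=B_R(x)\times (R^4/2,R^4)$, and to deduce analyticity from the fact that the scalar coefficient $v^{-2k}P_j(\nabla u)$ multiplying the $\nabla^2 u$-factors in $f_0,f_1,f_2$ is an analytic $L^\infty$-valued function of $\nabla u$ whenever $\|\nabla u\|_{L^\infty}<1$.

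For \eqref{estyx} the key observation is that the Lebesgue exponents used in $Y_{0,T},Y_{1,T},Y_{2,T}$ were chosen precisely to match the degree of each term in $\nabla^2 u$: $f_0$ is cubic, $f_1$ quadratic and $f_2$ linear in $\nabla^2 u$. Applying Hölder's inequality on $Q_R(x)$ with three, two or one equal factors in $L^{n+6}$ yields
\begin{align*}
\|f_0[u]\|_{L^{(n+6)/3}(Q_R(x))} &\le c(\|\nabla u\|_{L^\infty})\|\nabla^2 u\|_{L^{n+6}(Q_R(x))}^3,\\
\|f_1[u]\|_{L^{(n+6)/2}(Q_R(x))} &\le c(\|\nabla u\|_{L^\infty})\|\nabla^2 u\|_{L^{n+6}(Q_R(x))}^2,\\
\|f_2[u]\|_{L^{n+6}(Q_R(x))} &\le c(\|\nabla u\|_{L^\infty})\|\nabla^2 u\|_{L^{n+6}(Q_R(x))},
\end{align*}
where the function $c(\cdot)$ arises from summing the geometric power series representation of $\sum_k v^{-2k} P_{j}(\nabla u)$ in $L^\infty$ and is bounded on $\{\|\nabla u\|_{L^\infty}\le \eps_0\}$ for any $\eps_0<1$. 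Multiplying each inequality by the appropriate power $R^{6/(n+6)}, R^{4/(n+6)}, R^{2/(n+6)}$ respectively, and taking suprema over $x$ and $R$, gives \eqref{estyx}.

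The Lipschitz estimate \eqref{estyxa} then follows by telescoping: in every monomial contributing to $f[u_1]-f[u_2]$ one replaces the factors $\nabla^2 u$ and the scalar $v^{-2k}P_j(\nabla u)$ one at a time, producing either a factor $\nabla^2(u_1-u_2)$ or a difference of scalar coefficients. For the latter one invokes the pointwise bound
\[
|v_1^{-2k}P_j(\nabla u_1)-v_2^{-2k}P_j(\nabla u_2)|\le c(\|\nabla u_1\|_{L^\infty},\|\nabla u_2\|_{L^\infty})|\nabla u_1-\nabla u_2|,
\]
which is immediate from smoothness of $\xi\mapsto (1+|\xi|^2)^{-k}P_j(\xi)$ on bounded sets (this is essentially the estimate already used in Lemma \ref{YX}). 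Each resulting term is then bounded by exactly the Hölder argument of the previous paragraph.

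Analyticity of $f[\cdot]:X_T\to Y_T$ is a consequence of the same expansion, which realises $f$ as an absolutely convergent series of bounded multilinear maps from $X_T$ to $Y_T$ on a ball around the origin. The only delicate point, and the only place where the precise definition of the spaces enters, is the bookkeeping of scaling weights in Hölder's inequality: one must verify that $(R^{2/(n+6)})^3=R^{6/(n+6)}$ and $(R^{2/(n+6)})^2=R^{4/(n+6)}$ agree with the weights prescribed in Definition \ref{space}. Once this compatibility is noted the remainder of the argument is routine.
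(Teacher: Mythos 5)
Your overall strategy --- H\"older's inequality on the cylinders $B_R(x)\times(\frac{R^4}{2},R^4)$ with exponents matched to the degree in $\nabla^2 u$, a power series / multilinear expansion for analyticity, and the pointwise Lipschitz bound for the coefficient functions --- is exactly the argument the paper intends (its proof simply refers back to Lemma \ref{YX}). But as written your estimates do not prove \eqref{estyx}: you absorb the entire coefficient $\sum_k v^{-2k}P_j(\nabla u)$ into a bounded constant $c(\|\nabla u\|_{L^\infty})$, so your three displayed inequalities only give $\|f_0[u]\|_{Y_{0,T}}\le c\|u\|_{X_T}^3$, $\|f_1[u]\|_{Y_{1,T}}\le c\|u\|_{X_T}^2$ and $\|f_2[u]\|_{Y_{2,T}}\le c\|u\|_{X_T}$, i.e.\ $\|f[u]\|_{Y_T}\le c(\|u\|_{X_T}+\|u\|_{X_T}^2+\|u\|_{X_T}^3)$, which is strictly weaker than the cubic bound (for small $\|u\|_{X_T}$ your $f_2$-estimate is only linear). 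The missing powers must come from the $\nabla u$-factors in the coefficients: since $P_{2k-1}$ carries at least one and $P_{2k}$ at least two factors of $\nabla u$, and $v^{-2k}|\nabla u|^{2k-2}\le 1$ for all $u$, one has the pointwise bounds $|f_1[u]|\le c\,|\nabla u|\,|\nabla^2 u|^2$ and $|f_2[u]|\le c\,|\nabla u|^2|\nabla^2 u|$; the extra powers of $\|u\|_{X_T}$ are then supplied by the $\sup_t\|\nabla u(t)\|_{L^\infty}$ part of the $X_T$-norm, and your H\"older step goes through unchanged.

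This bookkeeping is not cosmetic, because the same loss infects \eqref{estyxa}: in your telescoping of $f_2[u_1]-f_2[u_2]$, the term $\nabla^2(u_1-u_2)\star\sum_k v_1^{-2k}P_{2k}(\nabla u_1)$ is, under your bounded-constant treatment, only bounded by $c\|u_1-u_2\|_{X_T}$ without the factor $(\|u_1\|_{X_T}+\|u_2\|_{X_T})$; that factor is precisely what makes $F$ a contraction on a small ball in Theorem \ref{ita}, so it cannot be discarded. Retaining $|\sum_k v_1^{-2k}P_{2k}(\nabla u_1)|\le c|\nabla u_1|^2$ (and the analogous gains in the other telescoped terms) repairs the argument. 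Two smaller points: the restriction $\|\nabla u\|_{L^\infty}\le\eps_0<1$ should not appear in the estimates, since \eqref{estyx}--\eqref{estyxa} are asserted for all of $X_T$; it is also unnecessary, because the coefficient sums are finite and globally bounded with the gains just described. The convergence-radius issue is only relevant for analyticity, where (like the paper) you should either expand about an arbitrary $u\in X_T$ or invoke analyticity of $\xi\mapsto(1+|\xi|^2)^{-k}$ composed with $\nabla u$, rather than the geometric series at the origin alone.
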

\begin{proof}
The proof is the same as the one for Lemma \ref{YX}.
\end{proof}
\begin{lemma}\label{xy}
Let $0<T\le \infty$ and $f_0+\nabla f_1+\nabla^2 f_2=f\in Y_T$. Then $Vf\in X_T$ and we have
\begin{align}
||Vf||_{X_T}\le c||f||_{Y_T}.\label{xy1}
\end{align}
\end{lemma}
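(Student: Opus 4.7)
The argument is modeled on Lemma \ref{XY}, with the extra difficulty that the scale-invariant $L^{n+6}$ component of the $X_T$ norm requires parabolic maximal regularity for $\partial_t + \Delta^2$ near the singular support of $b$. Translation invariance in $x$ and the scaling $u\mapsto \lambda^{-1} u(\lambda\cdot,\lambda^4\cdot)$ of \eqref{scaling} preserve both $X_T$ and $Y_T$ (the exponents in Definition~\ref{space} are chosen for exactly this reason), so it suffices to show, for $T\ge 1$,
\[
|\nabla Vf(0,1)| + \|\nabla^2 Vf\|_{L^{n+6}(B_1(0)\times(1/2,1))} \le c\,\|f\|_{Y_T}.
\]
With $f = f_0 + \nabla f_1 + \nabla^2 f_2$, integrating by parts in $y$ yields
\[
\nabla^j V f(x,t) = \sum_{k=0}^{2} (-1)^k \int_0^t\!\!\int_{\R^n} (\nabla^{j+k} b)(x-y,t-s)\, f_k(y,s)\,dy\,ds, \quad j\in\{1,2\},
\]
so each piece is a convolution of $f_k$ against a derivative of $b$.

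\textbf{Pointwise part.} For $|\nabla Vf(0,1)|$ I would decompose $\R^n\times(0,1)$ into parabolic dyadic shells $A_j:=\{(1-s)^{1/4}+|y|\sim 2^{-j}\}$, $j\in\Z$, and use the decay $|\nabla^{k+1}b(-y,1-s)|\le c\,2^{j(n+k+1)}$ from \eqref{kernel2}. A H\"older inequality with exponent $(n+6)/(3-k)$ combined with the parabolic volume bound on $A_j$ gives a contribution proportional to the local $L^{(n+6)/(3-k)}$ norm of $f_k$ on $A_j$, and the scaling factor that emerges matches exactly the Carleson exponent in $\|f_k\|_{Y_{k,T}}$. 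Interior shells $(j\ge 1)$ lie in the single $Y$-cylinder $B_1(0)\times(1/2,1)$, giving a bound $c\,2^{-j(6-2k)/(n+6)}\|f_k\|_{Y_{k,T}}$ that is geometric in $j$. Exterior shells $(j\le -1)$ extend across $\R^n$ at all times in $(0,1)$; they are covered by $Y$-cylinders $B_{R_m}(x_\alpha)\times(R_m^4/2,R_m^4)$ with $R_m=2^{-m/4}$ on dyadic time slabs $(2^{-m-1},2^{-m})$, using $\sim(2^{-j}/R_m)^n$ spatial cylinders per slab; the resulting double geometric sum in $(j,m)$ converges to $c\,\|f_k\|_{Y_{k,T}}$.

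\textbf{$L^{n+6}$ part.} For $\|\nabla^2 Vf\|_{L^{n+6}(B_1\times(1/2,1))}$ I would split $f_k=f_k\chi+f_k(1-\chi)$, with $\chi$ a cut-off equal to $1$ on a slight parabolic enlargement such as $B_2(0)\times(1/4,1)$. The far piece is controlled by running the pointwise dyadic argument above around a variable base point $(x_0,t_0)\in B_1\times(1/2,1)$; the resulting pointwise bound integrates trivially to $L^{n+6}$. The near piece is handled by parabolic regularity for $\partial_t+\Delta^2$: for $k=2$, classical maximal regularity gives $\|\nabla^4 V g\|_{L^{n+6}(\R^n\times\R^+)}\le c\,\|g\|_{L^{n+6}}$; for $k=0,1$ the operator $\nabla^{k+2} V$ is parabolically smoothing of order $2-k$, and maximal regularity combined with parabolic Sobolev embedding --- applied after a H\"older step on the $O(1)$-sized support of $\chi$ to descend from $L^{(n+6)/(3-k)}$ to the Sobolev-critical input space --- yields the required $L^{n+6}$ bound. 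Finally $\|f_k\chi\|_{L^{(n+6)/(3-k)}}\le c\,\|f_k\|_{Y_{k,T}}$ by covering $\on{supp}\chi$ with $O(1)$ many unit-scale $Y$-cylinders.

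\textbf{Main obstacle.} The delicate step is the near-field bound for $k=1$: the map $\nabla^3 V:L^{(n+6)/2}\to L^{n+6}$ is \emph{not} scale-consistent as a global estimate, so one must exploit the bounded-support reduction (fixed by the prior scaling normalization) together with parabolic Sobolev embedding, the constants depending on the $O(1)$ size of $\on{supp}\chi$. Once Lemma~\ref{xy} is in hand, Theorem~\ref{ita} follows from it together with Lemma~\ref{yx} and Lemma~\ref{homog} by the usual Banach fixed-point argument, analyticity being automatic since $F$ is polynomial in $u$.
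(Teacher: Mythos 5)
Your proposal is correct and follows the same overall strategy as the paper: reduction by translation and the scaling \eqref{scaling} to a unit-scale estimate, a near/far splitting for the pointwise bound, reduction of the $L^{n+6}$ bound to data supported in $B_2(0)\times(\frac14,1)$, and a parabolic singular-integral (Calder\'on--Zygmund/maximal regularity) argument for the $\nabla^2 f_2$ term. Two steps are handled by equivalent substitutes. For the far field you use the polynomial kernel bounds \eqref{kernel2} organized in parabolic dyadic shells together with the covering by cylinders $B_{2^{-m/4}}(y)\times(2^{-m-1},2^{-m})$; the paper instead integrates by parts, invokes the exponential decay \eqref{decay} to sum over unit lattice cubes, and isolates the same covering count as the claim \eqref{gamma} with $\gamma=2^{-1/4}$ --- your exponent bookkeeping ($2^{-j(6-2k)/(n+6)}$ for the interior shells, the convergent double sum $\rho_j^{-(1+k)}2^{-m(1+k)/4}$ outside) reproduces exactly that computation, so nothing is lost, and you avoid needing \eqref{decay} at all. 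For the $L^{n+6}$ bounds on $\nabla^2Vf_0$ and $\nabla^2V\nabla f_1$ the paper simply applies the space-time Young inequality with the unit-time kernel integrability \eqref{estb} (exponents $\frac{n+6}{n+4}$ and $\frac{n+6}{n+5}$), which is more elementary than your route via maximal regularity plus parabolic Sobolev embedding and a H\"older step on the bounded support; both exploit, as you correctly emphasize, that these estimates are not scale-invariant and only need to hold at the unit scale fixed by the normalization. Your exponent checks for the embedding route do close (e.g.\ $\frac1p=\frac1{n+6}+\frac1{n+4}<\frac2{n+6}$ for the $f_1$ term), so the argument is sound; the paper's Young-inequality shortcut just spares you the discussion of anisotropic Sobolev embeddings for the non-compactly-supported solution.
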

\begin{proof}
Since the estimate \eqref{xy1} is invariant under translations and the scaling defined in \eqref{scaling} it is enough to show that
\begin{align*}
|\nabla Vf(0,1)|+||\nabla^2 Vf||_{L^{n+6}(B_1(0)\times (\frac{1}{2},1))} \le c||f||_{Y_T}
\end{align*}
for some $T>1$.

By definition we have
\[ 
\nabla Vf(0,1) = \int_0^1\int_{\R^n} \nabla b(x,1-t) f(x,t)  dx dt
\]
and, with $Q= B_1(0) \times [\frac{1}{2},1)$ and $Q' = B_1(0) \times (0,\frac12)$, we decompose
\begin{align*}
|\nabla Vf(0,1)|\le& \Big|\int_Q \nabla b(x,1-t) f(x,t)  dx dt\Big|+\Big|\int_{\R^n\times (0,1) \backslash Q} \nabla b(x,1-t) f(x,t)  dx dt\Big|\\
=&I+II.
\end{align*}
Now we estimate term by term. We start with $I$.
\begin{align*} 
I\le& \left| \int_Q  \Big( \nabla b(x,1-t) f_0 (x,t)- \nabla^2 b(x,1-t) f_1(x,t) + \nabla^3 b(x,1-t) f_2(x,t) \Big) dx dt \right|  \\
\le& 
\Vert \nabla b \Vert_{L^{\frac{n+6}{n+3}}(Q')}  \Vert f_0 \Vert_{L^\frac{n+6}3(Q)} + \Vert \nabla^2 b \Vert_{L^{\frac{n+6}{n+4}}(Q')} \Vert f_1 \Vert_{L^{\frac{n+6}2}(Q)}  \\
&+ \Vert \nabla^3 b \Vert_{L^{\frac{n+6}{n+5}}(Q') } \Vert f_2 \Vert_{L^{n+6}(Q)}  \\
\le& c||f||_{Y_T},
\end{align*} 
where we used the fact that
\begin{align}
||\nabla b||_{L^{\frac{n+6}{n+3}}(\R^n\times [0,1])}+||\nabla^2 b||_{L^{\frac{n+6}{n+4}}(\R^n\times [0,1])}+||\nabla^3 b||_{L^{\frac{n+6}{n+5}}(\R^n\times [0,1])}\le c, \label{estb} 
\end{align}
which is a consequence of the estimate \eqref{kernel2}.

Integrating by parts and using \eqref{decay} we get
\begin{align*}
II\le& c\sum_{m=0}^\infty \sum_{y\in \Z^n}  \int_{2^{-m-1}}^{2^{-m}} \int_{B_1(y)} e^{-c_1|x|} (|f_0|+|f_1|+|f_2|)(x,t)  dx dt\\ 
\le&  c \sum_{m=0}^\infty \sup_{y\in \Z^n}  \int_{B_1(y) \times (2^{-m-1},2^{-m})} (|f_0|+|f_1|+ |f_2|) dx dt. 
\end{align*}
Now we claim that there exists a number $\gamma <1$ such that
\begin{align}   
\int_{B_1(0) \times (2^{-m-1}, 2^{-m})}(|f_0|+|f_1|+|f_2|)(x,t) dx\, dt 
 \le c \gamma^m \Vert f \Vert_{Y_T}. \label{gamma} 
\end{align} 
Using the translation invariance this claim then implies that
\begin{align*}
II\le  c \Vert f \Vert_{Y_T},
\end{align*}
which, combined with the above estimate, shows that
\begin{align*}
|\nabla Vf(0,1)|\le c\Vert f \Vert_{Y_T}
\end{align*}
and finishes the proof of the first part of the estimate \eqref{xy1}.

To prove \eqref{gamma} we cover the set $B_1(0) \times (2^{-m-1}, 2^{-m})$ by approximately $2^{\frac{nm}{4}}$ cylinders of the form $Q_m(y):= B_{2^{-\frac{m}{4}}}(y) \times (2^{-m-1},2^{-m})$. By H\"older's inequality we get
\begin{align*}  
\sum_{j=0}^2 \Vert f_j \Vert_{L^1(Q_m(y))}
\le&  c  (  2^{ \frac{6m-m(n+4)(n+3)}{4(n+6)}} + 2^{  \frac{4m- m(n+4)^2}{4(n+6)}} +     2^{\frac{2m-m(n+4)(n+5)}{4(n+6)}}  )   \Vert f\Vert_{Y_T} 
\\ \le &  c 2^{-\frac{m}{4}-\frac{mn}{4}}     \Vert f\Vert_{Y_T} ,        
\end{align*}  
which implies \eqref{gamma} with $\gamma = 2^{-\frac14}$.

Using the same arguments as in the estimate for the term $II$ above we get the pointwise bound 
\begin{align*} 
\sup_{(x,t)\in Q}| & \int_{\R^n\times (0,1) \backslash B_2(0)\times (\frac14,1)}  \Big(\nabla^2 b(x-y,t-s) f_0(y,s)-\nabla^3 b(x-y,t-s) f_1(y,s)\\
&+\nabla^4 b(x-y,t-s) f_2(y,s)\Big)  dy ds |\le c \Vert f \Vert_{Y_{T}}.
\end{align*}
Therefore it remains to show the estimate for the $L^{n+6}$-norm of $\nabla^2 Vf$ on $Q$ for functions $f_0$, $f_1$ and $f_2$ whose support is contained in $B_2(0)\times (\frac14,1)$. 

In this situation the estimate for $\nabla^2 V(\nabla^2 f_2)$ follows immediately from the fact that
\begin{align} 
\Vert \nabla^2 u \Vert_{L^{n+6}(\R^n \times \R^+)} 
\le c \Vert f_2 \Vert_{L^{n+6}(\R^n \times \R^+)} \label{CZ}
\end{align}
for all solutions $u$ of 
\begin{align} 
u_t + \Delta^2 u = \nabla_{ij}^2 f^{ij}_2, \qquad u(\cdot,0)=0. \label{u}
\end{align}
\eqref{CZ} can be seen as follows: 
Multiplying the equation \eqref{u} by $u$ and integrating by parts, we get with the help of H\"older's inequality
\begin{align*} 
|| \Delta u ||^2_{L^2(\R^n\times \R^+)} \le c|| f_2 ||_{L^2(\R^n\times \R^+)}||\nabla^2 u ||_{L^2(\R^n\times \R^+)}. 
\end{align*}
Integrating by parts again and interchanging derivatives yields
\begin{align*} 
|| \nabla^2 u ||_{L^2(\R^n\times \R^+)} \le c|| f_2 ||_{L^2(\R^n\times \R^+)}. 
\end{align*}
Hence the operator which maps $f_2^{ij}$ to $\partial^2_{kl} u $ is a continuous and linear operator $T_{ijkl}$ from $L^2$ to $L^2$, which has an integral kernel given by $\partial^4_{ijkl} b$. We equip $\R^n\times \R$ with the metric 
\[ d((x,t),(y,s)) = \max \{ |x-y|, |t-s|^{1/4} \}. \]
and we let $m^{n+1}$ be the Lebesgue measure. The triple $(\R^n \times \R, d, m^{n+1})$ is a space of homogeneous type and $T_{ijkl}$ is a singular integral operator in this non Euclidean setting. As a consequence of this we get for every $1<p< \infty$ that (see for example \cite{stein93})
\[  
|| T_{ijkl} f ||_{L^p(\R^n\times \R^+)}  \le c \frac{p^2}{p-1} || f ||_{L^p(\R^n\times \R^+)}
\]
and this shows \eqref{CZ} by choosing $p=n+6$. 

In order to estimate the $L^{n+6}$-norm of $\nabla^2 Vf_0$ and $\nabla^2 V\nabla f_1$ we recall the Young inequality 
\begin{align*}
|| f*g ||_{L^m(\R^n \times \R^+)} \le c || f ||_{L^p(\R^n \times \R^+)} || g ||_{L^{q}(\R^n \times \R^+)}, 
\end{align*}
where
\begin{align*}
 0 \le p,q,m \le \infty\ \ \ \text{and}\ \ \  \frac1p + \frac1q = 1+ \frac1m.
\end{align*}
Applying this inequality with $m=n+6$, $p=\frac{n+6}{3}$, $q=\frac{n+6}{n+4}$, respectively $m=n+6$, $p=\frac{n+6}{2}$, $q=\frac{n+6}{n+5}$ and using \eqref{estb} we therefore get
\begin{align*}
||\nabla^2 Vf_0||_{L^{n+6}(\R^n \times (0,1))}+&||\nabla^2 V\nabla f_1||_{L^{n+6}(\R^n \times (0,1))}\\
\le& c(||f_0||_{L^{\frac{n+6}{3}}(\R^n \times \R^+)}+|| f_1||_{L^{\frac{n+6}{2}}(\R^n \times \R^+)}).
\end{align*}
Since the support of $f_0$ and $f_1$ is contained in $B_2(0)\times (\frac14,1)$ this estimate completes the proof of the Lemma. 
\end{proof}

\subsection{Proof of Theorem \ref{main}}
\begin{proof}
For every $0<T \le \infty$ and $u_0\in C^{0,1}(\R^n)$ we define the operator $F_{u_0}:X_T \rightarrow X_T$ by
\begin{align}
F_{u_0}(u)=F(u_0,u)=Su_0+Vf[u], \label{oper}
\end{align}
where $f[u]=f_0[u]+\nabla f_1[u]+\nabla^2 f_2[u]$ and $f_0[u]$, $f_1[u]$ and $f_2[u]$ are as in Lemma \ref{rewr}. From Theorem \ref{ita} and the Banach fixed point theorem we get that there exist $\delta_1,\delta_2>0$ such that for all $u_0\in C^{0,1}(\R^n)$ with $||u_0||_{C^{0,1}(\R^n)}<\delta_1$ the map $F_{u_0}$ has a unique fixed point $u\in X_T^{\delta_2}$ ($X_T^{\delta_2}$ is defined in Theorem \ref{ita}). Moreover $u$ depends on $u_0$ in a Lipschitz continous way. Thus $u$ is the unique global solution of \eqref{rewr1} we were looking for.

Next we show that $u$ depends analytically on $u_0$. From Theorem \ref{ita} we get that for every $0<T\le \infty$ the map $G:C^{0,1}(\R^n)\times X_T \rightarrow X_T$, defined by
\[
G(u_0,u)=u-Su_0-Vf[u]=u-F(u_0,u),
\] 
is analytic, $G(0,0)=0$ and
\begin{align*}
D_uG(0,0)=id.
\end{align*}
Combining all these facts we can apply the (analytic) implicit function theorem (see for example \cite{deimling}) to get the existence of balls $B_\varepsilon(0)\subset C^{0,1}(\R^n)$, $B_\gamma(0) \subset X_T$ and a unique analytic map $A:B_\varepsilon(0)\subset C^{0,1}(\R^n)\rightarrow B_\gamma(0)\subset X_T$ such that $A(0)=0$ and $G(u_0,A(u_0))=0$ for all $u_0 \in B_\varepsilon(0)$. Moreover $G(u_0,u_1)=0$ if and only if $u_1=A(u_0)$. From the above considerations we conclude that for $\delta=\min\{\delta_2,\gamma\}$ there exists a unique solution $u\in X_T^{\delta}$ of \eqref{rewr1} which depends analytically on the initial data $u_0$.

It remains to prove that $u(x,t)$ is analytic in $x$ and $t$ for every $x\in \R^n$ and $0<t<\infty$. In order to do this we let $T<\infty$ and we define for $\eps_1,\eps_2>0$ small an operator $\tilde{G}:D_{\eps_1}(0)\times (1-\eps_2,1+\eps_2)\times C^{0,1}(\R^n) \times X_T \rightarrow X_T$, where $D_{\eps_1}(0)\subset \R^n$, by
\begin{align}
\tilde{G}(a,\tau,u_0,u)=u-Su_0-V\tilde{f}_{a,\tau}[u],\label{tildeF}
\end{align} 
where
\begin{align*}
\tilde{f}_{a,\tau}[u]=\tau f[u]+(1-\tau)\Delta^2 u-a\nabla u.
\end{align*}
Note that $\tilde{f}_{0,1}[u]=f[u]$. By using Lemma \ref{kernel} it is easy to see that $||Va\nabla u||_{X_T}\le c|a|T^{\frac{3}{4}} ||u||_{X_T}$. Moreover, by defining $\tilde{f}[u]=\tau f[u]+\Delta \big((1-\tau)\Delta u\big)$, we get that Theorem \ref{ita} remains valid for $\tilde{f}[u]$ and we have the estimate
\begin{align*}
||\tilde{G}(a,\tau,u_0,u)||_{X_T}\le c\Big(||u_0||_{C^{0,1}(\R^n)}+||u||_{X_T}(1+|a|T^{\frac{3}{4}}+|1-\tau|+||u||_{X_T}^2)\Big).
\end{align*}
Since $\tilde{G}(0,1,0,0)=0$ and $D_u \tilde{G}(0,1,0,0)=id$, another application of the implicit function theorem gives the existence and uniqueness of an analytic map $\tilde{A}: D_{\tilde{\varepsilon}}(0) \times (1-\tilde{\varepsilon},1+\tilde{\varepsilon}) \times B_{\tilde{\varepsilon}}(0) \rightarrow X_T$ such that $\tilde{G}(a,\tau,u_0,\tilde{A}(a,\tau,u_0))=0$ and therefore
\begin{align*}
\tilde{A}(a,\tau,u_0)=Su_0+V\tilde{f}[\tilde{A}(a,\tau,u_0)].
\end{align*}
Next we let $\overline{\varepsilon}<\min\{\varepsilon,\tilde{\varepsilon}\}$ and we observe from the above uniqueness results that $A(u_0)(x-at,\tau t)=\tilde{A}(a,\tau,u_0)$ since $A(u_0)(\cdot,0)=u_0=\tilde{A}(a,\tau,u_0)(\cdot,0)$ and $\tilde{G}(a,\tau,u_0,A(u_0)(x-at,\tau t))=0$. 

Hence we get that $A(u_0)(x-at,\tau t)$ depends analytically on $a$ and $\tau$. Since for finite $t$ we moreover have that
\begin{align*}
\frac{\partial}{\partial a} A(u_0)(x-at,\tau t)|_{(a,\tau)=(0,1)}=& -t \nabla A(u_0)(x,t),\\
\frac{\partial}{\partial \tau} A(u_0)(x-at,\tau t)|_{(a,\tau)=(0,1)}=& t \partial_t A(u_0)(x,t),
\end{align*}
with similar formulas for higher and mixed derivatives, we conclude that $A(u_0)$ and therefore also $u$ is analytic in space and time for all $x\in \R^n$ and all $0<t<\infty$. The estimate \eqref{ana} (resp. \eqref{anasdif}) now follows from a scaling argument and the above formula for the derivatives of $u$.
\end{proof}

\subsection{Self-similar solutions}

In this subsection we use Theorem \ref{main} in order to show the existence of self-similar solutions of the Willmore and surface diffusion flow for graphs. More precisely we show the existence of homothetically expanding solutions. Since the arguments for both flows are identitical we restrict ourselves to the the situation of the Willmore flow. 

We consider self-similar initial data $u_0$, i.e. $u_0$ which satisfy
\begin{align}
u_0(x)=\frac{1}{\lambda}u_0(\lambda x)\ \ \text{for any}\ \ \lambda>0 \ \ \text{and}\ \ x\in \R^n. \label{self}
\end{align}
Hence $\Sigma_0=\text{graph}(u_0)$ is a cone with vertex $0$. If we
assume that $||u_0||_{C^{0,1}(\R^n)}<\varepsilon$, where $\varepsilon$
is as in Theorem \ref{main}, we get from Theorem \ref{main} the
existence of a unique analytic solution $u\in X_\infty$ of \eqref{wflow} with
initial condition $u(\cdot,0)=u_0$. Next, if we define
$u_{0,\lambda}(x)=\frac{1}{\lambda}u_0(\lambda x)$, we get that
$||u_{0,\lambda}||_{C^{0,1}(\R^n)}=||u_0||_{C^{0,1}(\R^n)}<\varepsilon$
and hence $u_\lambda(x,t)=\frac{1}{\lambda}u(\lambda x,\lambda^4 t)$
is the unique analytic solution of \eqref{wflow} in $X_\infty$ with
$u_\lambda(\cdot, 0)=u_{0,\lambda}$. Since by \eqref{self} we have
that $u_0=u_{0,\lambda}$ we get that for any self-similar initial data
$u_0$ with $||u_0||_{C^{0,1}(\R^n)}<\varepsilon$ there exists a unique
analytic solution of \eqref{wflow} which satisfies
\begin{align}
u(x,t)=\frac{1}{\lambda}u(\lambda x,\lambda^4 t)\label{self2}
\end{align}
for any $x\in \R^n$, $t>0$ and $\lambda>0$. Defining $\lambda=t^{-\frac{1}{4}}$ and $\Psi(y)=u(y,1)$ (note that $\Psi$ is analytic) we get that 
\begin{align}
u(x,t)=t^{\frac{1}{4}} \Psi(xt^{-\frac{1}{4}}).\label{self3}
\end{align}
Moreover $\Psi$ satisfies the elliptic equation
\begin{align}
\Delta^2 \Psi +\frac{1}{4}\Psi-\frac{y}{4}\cdot \nabla \Psi=f[\Psi],\label{self4}
\end{align}
where $f$ is as in Lemma \ref{rewr}. Combining all these facts we get the following Theorem.
\begin{theorem}\label{selfa}
  There exists $\varepsilon>0$, $C>0$ such that if $u_0\in C^{0,1}(\R^n)$ is
  self-similar with $||u_0||_{C^{0,1}(\R^n)}<\varepsilon$, then there
  exists a global analytic and self-similar solution $u\in X_\infty$ of the
  Willmore flow \eqref{wflow} which satisfies the estimates $||u||_{X_\infty}\le C||u_0||_{C^{0,1}(\R^n)}$ and \eqref{ana}. The solution is unique in the ball $B^{X_\infty}(0,C\eps)$. Moreover $u$ can be written in the form \eqref{self3},
  where $\Psi$ is an analytic solution of \eqref{self4}.
\end{theorem}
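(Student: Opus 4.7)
The plan is to obtain the theorem as a direct corollary of Theorem \ref{main} combined with the scaling invariance of the Willmore flow equation \eqref{wflow}, essentially formalizing the discussion preceding the theorem statement.

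First, given a self-similar $u_0 \in C^{0,1}(\R^n)$ with $\|u_0\|_{C^{0,1}(\R^n)} < \varepsilon$ (where $\varepsilon$ comes from Theorem \ref{main}), I would invoke Theorem \ref{main} to produce a unique analytic solution $u \in X_\infty$ of \eqref{wflow} with initial datum $u_0$, automatically satisfying $\|u\|_{X_\infty} \le C\|u_0\|_{C^{0,1}(\R^n)}$ and the derivative estimate \eqref{ana}.

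Next I would exploit the scaling symmetry. For each $\lambda > 0$, define the rescaled function $u_\lambda(x,t) = \frac{1}{\lambda}u(\lambda x, \lambda^4 t)$. By the scaling invariance \eqref{scaling}, $u_\lambda$ is also a solution of \eqref{wflow}, and since $u_0$ is self-similar its initial datum satisfies $u_{0,\lambda}(x) = \frac{1}{\lambda}u_0(\lambda x) = u_0(x)$. Moreover, the $X_\infty$-norm is invariant under this scaling, so $u_\lambda$ still lies in the ball $B^{X_\infty}(0,C\varepsilon)$ where uniqueness holds. Applying the uniqueness part of Theorem \ref{main} yields $u = u_\lambda$ pointwise for every $\lambda > 0$, i.e.\ the identity \eqref{self2}.

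The representation \eqref{self3} then follows by specializing \eqref{self2} to $\lambda = t^{-1/4}$ and setting $\Psi(y) := u(y,1)$; analyticity of $\Psi$ on $\R^n$ comes from the spatial analyticity of $u$ guaranteed by \eqref{ana}. Finally, the elliptic equation \eqref{self4} for $\Psi$ is obtained by substituting the ansatz \eqref{self3} into \eqref{rewr1} and computing
\[
u_t = \tfrac{1}{4}t^{-3/4}\Psi(xt^{-1/4}) - \tfrac{1}{4}t^{-3/4}(xt^{-1/4})\cdot \nabla \Psi(xt^{-1/4}),
\]
together with the scaling $\Delta^2 u = t^{-3/4}(\Delta^2\Psi)(xt^{-1/4})$ and the fact that the nonlinearity $f[u]$ is itself homogeneous of weight $t^{-3/4}$ under the scaling, so that evaluating at $t=1$ gives exactly \eqref{self4}.

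I do not anticipate a real obstacle here: the hardest point is really just the verification that the rescaled solution $u_\lambda$ still sits in the uniqueness ball $B^{X_\infty}(0,C\varepsilon)$, but this is immediate from the scale invariance of every term defining $\|\cdot\|_{X_\infty}$. Everything else is an invocation of Theorem \ref{main} and a change of variables.
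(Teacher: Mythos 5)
Your argument is correct and is essentially the paper's own proof: the discussion preceding Theorem \ref{selfa} proceeds exactly as you do, invoking Theorem \ref{main}, using the scale invariance of \eqref{wflow} and of the $X_\infty$-norm together with uniqueness in $B^{X_\infty}(0,C\eps)$ to get \eqref{self2}, and then setting $\lambda=t^{-\frac14}$, $\Psi(y)=u(y,1)$ to obtain \eqref{self3} and \eqref{self4}. Your computation of $u_t$, $\Delta^2 u$ and the homogeneity of $f[u]$ under the ansatz is also the intended verification of \eqref{self4}.
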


\section{Ricci-DeTurck flow}
\setcounter{equation}{0}

On a manifold $M^n$ with a family of Riemannian metrics $g(t)$ the Ricci flow is given by
\begin{align}
\partial_t g=&-2\text{Ric}(g)\ \ \ \text{in}\ \ M^n\times (0,T)\ \ \ \text{and}\nonumber\\
g(\cdot,0)=&g_0,\label{Ricci}
\end{align}
where $\text{Ric}(g)$ denotes the Ricci curvature of $g$ and $g_0$ is some metric on $M^n$. In this section we are interested in a closely related flow, the so called Ricci-DeTurck flow for a family of Riemannian metrics $g(t)$ on $\R^n$. This flow is given by (see for example \cite{guenther02})
\begin{align}
\partial_t g=&-2\text{Ric}(g)-P_\delta(g)\ \ \ \text{in}\ \ \R^n\times (0,T)\ \ \ \text{and}\nonumber\\
g(\cdot,0)=&g_0,\label{RicciDe}
\end{align}
where $\delta$ is the euclidean metric and 
\[
P_\delta(g)=-2d_g^\star d_g(G(g,\delta)),
\]
where $G(g,\delta)=\delta-\frac{n}{2}g$, $d_g:h\rightarrow
d_gh=-g^{ij}\nabla_i h_{jk} dx^k$ ($d_g$ maps symmetric covariant
two-tensors onto one-forms) and $d_g^\star:\omega\rightarrow d^\star_g
\omega=\frac{1}{2}(\nabla_i \omega_j+\nabla_j \omega_i)dx^i \otimes
dx^j$ ($d_g^\star$ is the adjoint operator of $d_g$ with respect to
the $L^2$ inner product and therefore it maps one-forms onto symmetric
covariant two-tensors). 

The Ricci-DeTurck flow was introduced by DeTurck \cite{deturck83} in order to give a short proof for the short-time existence of the Ricci flow on compact manifolds. DeTurck achieved this goal by showing that the flows are equivalent (see also \cite{shi89}) and that \eqref{RicciDe} is a strictly parabolic system for which the general short-time existence theory can be applied. 

In local coordinates the Ricci-DeTurck flow \eqref{RicciDe} can be written as (see \cite{shi89})
\begin{align}
  \partial_t g_{ij} =& g^{ab} \nabla_{a}\nabla_b g_{ij}+\frac{1}{2}g^{ab} g^{pq}\Big( \nabla_i g_{pa} \nabla_j g_{qb}+2\nabla_a g_{jp} \nabla_q g_{ib}-2\nabla_a g_{jp} \nabla_b g_{iq} \nonumber \\
  &-2\nabla_j g_{pa} \nabla_b g_{iq}-2\nabla_i g_{pa} \nabla_b g_{jq}\Big), \label{ricci1}
\end{align}
where all the derivatives are taken with respect to the euclidean background metric. We can rewrite this system as follows
\begin{align}
(\partial_t -\Delta) h_{ij} =& \nabla_a \Big( \big( (\delta+h)^{ab}-\delta^{ab}\big)\nabla_b h_{ij}\Big)-\nabla_a (\delta+h)^{ab} \nabla_b h_{ij}\nonumber \\
&+\frac{1}{2}(\delta+h)^{ab} (\delta+h)^{pq}\Big( \nabla_i h_{pa} \nabla_j h_{qb}+2\nabla_a h_{jp} \nabla_q h_{ib}-2\nabla_a h_{jp} \nabla_b h_{iq}\nonumber \\
&-2\nabla_j h_{pa} \nabla_b h_{iq}-2\nabla_i h_{pa} \nabla_b h_{jq}\Big)\nonumber \\
=& R_0[h]+\nabla R_1[h],\label{ricci}
\end{align}
where $h=g-\delta$, $\Big( (\delta+h)^{ab}\Big)=\Big( (\delta+h)_{ab}\Big)^{-1}$ and
\begin{align*}
R_0[h]=& \frac{1}{2}(\delta+h)^{ab} (\delta+h)^{pq}\Big( \nabla_i h_{pa} \nabla_j h_{qb}+2\nabla_a h_{jp} \nabla_q h_{ib}-2\nabla_a h_{jp} \nabla_b h_{iq} \\
&-2\nabla_j h_{pa} \nabla_b h_{iq}-2\nabla_i h_{pa} \nabla_b h_{jq}\Big)-\nabla_a (\delta+h)^{ab} \nabla_b h_{ij}\ \ \ \text{and}\\
\nabla R_1[h]=&\nabla_a \Big( \big( (\delta+h)^{ab}-\delta^{ab}\big)\nabla_b h_{ij}\Big).
\end{align*}

We note that the Ricci-DeTurck flow is invariant under the scaling ($\lambda>0$)
\begin{align}
h_\lambda(x,t)=h(\lambda x,\lambda^2 t). \label{scalingR}
\end{align}

We remark that in the rest of this section all norms are taken with respect to the euclidean metric $\delta$.

For all $0<T\le \infty$ we define the function spaces 
\begin{align*}
X_T&=\{h |\,\ ||h||_X=\sup_{0<t<T}||h(t)||_{L^\infty(\R^n)}\\
+&\sup_{x\in \R^n}\sup_{0<R^2<T}
\left( R^{-\frac{n}{2}} \Vert \nabla h \Vert_{L^2(B_R(x) \times (0,R^2))} + R^{\frac{2}{n+4}} \Vert \nabla h \Vert_{L^{n+4}(B_R(x) \times (\frac{R^2}{2},R^2))}   \right) \\
<&\infty \}\ \ \text{and} \\ 
Y_T&= Y_T^0 + \nabla Y_T^1,
\end{align*}  
where 
\begin{align*}
 \Vert f \Vert_{Y_T^0} =&   \sup_{x\in \R^n} \sup_{0< R^2 < T} 
\left(R^{-n} \Vert f \Vert_{L^1(B_R(x) \times(0,R^2))} + R^{\frac4{n+4}} 
\Vert f \Vert_{L^{\frac{n+4}2}(B_R(x) \times (\frac{R^2}{2}, R^2))}\right), \\
\Vert f \Vert_{Y^1_T} =&  \sup_{x\in \R^n} \sup_{0< R^2 < T} 
\left(R^{-\frac{n}{2}} \Vert f \Vert_{L^2(B_R(x) \times(0,R^2))} + R^{\frac2{n+4}} 
\Vert f \Vert_{L^{n+4}(B_R(x) \times (\frac{R^2}{2}, R^2))}\right) .
\end{align*} 
Note that these spaces are both invariant under the scaling defined in
\eqref{scalingR}. 

From the definition of the spaces $X_T$, $Y_T$ and
the expressions for $R_0[h]$, $R_1[h]$ we directly get the following Lemma.
\begin{lemma}\label{first}
For every $0<T\le \infty$ and every $0<\gamma<1$ the operator $R_0[\cdot]+\nabla R_1[\cdot]:  X_T^{\gamma}=\{ h\in X_T|\,\ ||h||_{X_T}\le \gamma\}\rightarrow Y_T$ is analytic and we have the estimate
\begin{align}
||R_0[h]+\nabla R_1[h]||_{Y_T} \le c(\gamma) ||h||^2_{X_T}\label{firsta}
\end{align}
for all $h\in  X_T^{\gamma}$ and 
\begin{align}
||&R_0[h_1]-R_0[h_2]+\nabla( R_1[h_1]-R_1[h_2])||_{Y_T} \nonumber \\
&\le c(\gamma)(||h_1||_{X_T}+||h_2||_{X_T}) ||h_1-h_2||_{X_T}\label{firstaa}
\end{align}
for all $h_1,h_2 \in  X_T^{\gamma}$.
\end{lemma}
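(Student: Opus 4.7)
The plan is to reduce the lemma to estimates that follow from the explicit polynomial structure of $R_0$ and $R_1$ together with the definition of the norms. The starting observation is that for $h\in X_T^{\gamma}$ with $\gamma<1$, we have the Neumann expansion
\[
(\delta+h)^{-1}=\sum_{k=0}^\infty (-h)^k,
\]
converging in $L^\infty$ norm with geometric rate $\gamma^k$. Hence $(\delta+h)^{ab}$ and $(\delta+h)^{ab}-\delta^{ab}$ are analytic functions of $h$ with values in $L^\infty$; similarly $\nabla_a (\delta+h)^{ab}=-(\delta+h)^{ac}(\nabla_a h_{cd})(\delta+h)^{db}$ is analytic with values that are pointwise bounded by $c(\gamma)|\nabla h|$. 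Substituting into the formulas for $R_0[h]$ and $R_1[h]$ yields absolutely convergent multilinear expansions, from which analyticity as maps into $Y_T^0$ and $Y_T^1$ will follow once the norm estimates below are in hand for each multilinear piece with an extra factor of $\gamma^k$.

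Next I would prove the quadratic bound \eqref{firsta}. Decompose $R_1[h]=A(h)\nabla h$ with $A(h)=(\delta+h)^{-1}-\delta$, so that $|A(h)|\le c(\gamma)|h|\le c(\gamma)\|h\|_{L^\infty}$ pointwise. Then in both the $L^2$ and $L^{n+4}$ parts of $Y_T^1$,
\[
\|R_1[h]\|_{L^p(Q_R)}\le c(\gamma)\|h\|_{L^\infty}\|\nabla h\|_{L^p(Q_R)},
\]
where $Q_R$ is the appropriate parabolic cylinder. Multiplying by the correct power of $R$ and taking the supremum gives $\|R_1[h]\|_{Y_T^1}\le c(\gamma)\|h\|_{L^\infty}\|h\|_{X_T}\le c(\gamma)\|h\|_{X_T}^2$. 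For $R_0[h]$, every term is of the form $B(h)\nabla h\,\nabla h$ with $|B(h)|\le c(\gamma)$ (using the formula above for the derivative of the inverse to handle the $\nabla_a(\delta+h)^{ab}\nabla_b h_{ij}$ term). The $L^1$-part of $Y_T^0$ is then controlled by Cauchy--Schwarz:
\[
R^{-n}\|R_0[h]\|_{L^1(B_R\times(0,R^2))}\le c(\gamma)\bigl(R^{-n/2}\|\nabla h\|_{L^2(B_R\times(0,R^2))}\bigr)^2\le c(\gamma)\|h\|_{X_T}^2,
\]
while the $L^{(n+4)/2}$-part follows from Hölder's inequality $L^{n+4}\cdot L^{n+4}\hookrightarrow L^{(n+4)/2}$:
\[
R^{4/(n+4)}\|R_0[h]\|_{L^{(n+4)/2}(B_R\times(R^2/2,R^2))}\le c(\gamma)\bigl(R^{2/(n+4)}\|\nabla h\|_{L^{n+4}(\cdots)}\bigr)^2\le c(\gamma)\|h\|_{X_T}^2.
\]

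For the Lipschitz estimate \eqref{firstaa}, I would write
\[
R_0[h_1]-R_0[h_2]=\bigl(B(h_1)-B(h_2)\bigr)\nabla h_1\,\nabla h_1+B(h_2)\bigl(\nabla h_1-\nabla h_2\bigr)\nabla h_1+B(h_2)\nabla h_2\bigl(\nabla h_1-\nabla h_2\bigr),
\]
and similarly for $R_1[h_1]-R_1[h_2]$. Since $B$ and $A$ are analytic on the ball of radius $\gamma$, we have the pointwise mean-value bound $|B(h_1)-B(h_2)|+|A(h_1)-A(h_2)|\le c(\gamma)|h_1-h_2|$. Applying the same Hölder / Cauchy--Schwarz scheme as above term by term yields \eqref{firstaa}.

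The only step that requires a little care is the derivative term $\nabla_a(\delta+h)^{ab}\nabla_b h_{ij}$ appearing in $R_0[h]$: one must verify that, even though a derivative of the matrix inverse has been applied, the resulting expression is still a pointwise bilinear form in $\nabla h$ with $L^\infty$-bounded coefficients in $h$, so that the quadratic-in-$\nabla h$ bookkeeping used above is not disrupted. Once this is checked the rest is routine.
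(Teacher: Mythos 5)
Your proposal is correct and is essentially the argument the paper has in mind: the paper states the lemma follows ``directly'' from the definitions of $X_T$, $Y_T$ and the formulas for $R_0$, $R_1$, and your Neumann-series expansion of $(\delta+h)^{-1}$, the pointwise bounds $|A(h)|\le c(\gamma)|h|$ and $|\nabla_a(\delta+h)^{ab}|\le c(\gamma)|\nabla h|$, and the Cauchy--Schwarz/H\"older bookkeeping on the parabolic cylinders are exactly the details being suppressed. Your treatment of the derivative-of-inverse term and the difference estimates is the right way to fill in that gap.
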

Moreover we have
\begin{lemma}\label{firsti}
Let $0<T\le \infty$ and $R=R_0+\nabla R_1\in Y_T$. Then every solution $h$ of $(\partial_t-\Delta) h=R$ with $h(\cdot,0)=h_0\in L^\infty(\R^n)$ is in $X_T$ and we have the estimate
\begin{align}
||h||_{X_T}\le c(||h_0||_{L^\infty(\R^n)}+||R||_{Y_T}).\label{firstai}
\end{align}
\end{lemma}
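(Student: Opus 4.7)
My plan is to mimic the structure of the proof of Lemma \ref{xy}, adapting it from the biharmonic setting to the heat setting (so the natural scaling is $h_\lambda(x,t)=h(\lambda x,\lambda^2 t)$ and the parabolic distance is $d((x,t),(y,s))=\max\{|x-y|,|t-s|^{1/2}\}$). By Duhamel, write
\[
h(x,t) = \int_{\R^n} \Phi(x-y,t)h_0(y)\,dy + VR(x,t), \qquad VR(x,t) = \int_0^t\!\!\int_{\R^n} \Phi(x-y,t-s)R(y,s)\,dy\,ds.
\]
The first term is controlled by Lemma \ref{homog2nd}, so the whole task is to bound the three components of $\|VR\|_{X_T}$ by $\|R\|_{Y_T}$. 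Every term in the $X_T$-norm is translation invariant and scale invariant under the scaling \eqref{scalingR}, and so are both pieces of the $Y_T$-norm; thus it suffices (after a harmless translation and parabolic rescaling) to prove, assuming $T>1$,
\[
|VR(0,1)| + \|\nabla VR\|_{L^2(B_1(0)\times (0,1))} + \|\nabla VR\|_{L^{n+4}(B_1(0)\times (\tfrac12,1))} \le c\|R\|_{Y_T}.
\]

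For the pointwise and $L^2$ bounds I would split the space-time integration into a near region $Q'$ adjacent to $B_1(0)\times(\tfrac12,1)$ (on which $\nabla \Phi, \nabla^2\Phi$ are integrable kernels) and a far region. On $Q'$ I apply H\"older's inequality using \eqref{kernel2c} to estimate $\|\nabla\Phi\|_{L^{(n+4)/(n+3)}}$ and $\|\nabla^2\Phi\|_{L^{(n+4)/(n+2)}}$ (after integrating the $\nabla$ in $\nabla R_1$ by parts onto $\Phi$) against $\|R_0\|_{L^{(n+4)/2}}$ and $\|R_1\|_{L^{n+4}}$ from $Y_T$. On the far region I use the exponential decay \eqref{phi} of $\Phi,\nabla\Phi,\nabla^2\Phi$ and cover the remaining space-time by unit cubes indexed by $\Z^n\times\{2^{-m-1}<t<2^{-m}\}$; by the same dyadic counting argument as in Lemma \ref{xy} (covering $B_1(0)\times (2^{-m-1},2^{-m})$ by $\sim 2^{nm/2}$ parabolic cylinders of radius $2^{-m/2}$ and applying H\"older with the norms defining $Y_T^0$ and $Y_T^1$), one extracts a geometric factor $\gamma^m$ with $\gamma<1$, so the far-region contribution sums to $c\|R\|_{Y_T}$.

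The main obstacle is the $L^{n+4}$ bound on $\nabla VR$, because differentiating twice lands on the singular kernel $\nabla^2 \Phi$. I handle this by first truncating as in Lemma \ref{xy}: the part of $R$ supported outside a fixed parabolic neighbourhood of $Q$ is handled pointwise by exponential decay exactly as above, reducing matters to $R_0,R_1$ supported in $B_2(0)\times(\tfrac14,1)$. For the contribution of $\nabla V R_0$ I use Young's convolution inequality with $p=(n+4)/2$, $q=(n+4)/(n+3)$, $m=n+4$ together with the global bound $\|\nabla\Phi\|_{L^{(n+4)/(n+3)}(\R^n\times(0,1))}\le c$ from \eqref{kernel2c}. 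For the singular contribution $\nabla V(\nabla R_1^i)$ I invoke Calder\'on-Zygmund theory on the parabolic space of homogeneous type $(\R^n\times\R,d,m^{n+1})$: the operator $R_1^i\mapsto \partial_k V\partial_i R_1^i$ has kernel $\partial_k\partial_i\Phi$, which satisfies the standard size and H\"ormander smoothness estimates by \eqref{kernel2b}, and its $L^2\to L^2$ boundedness follows from a one-line energy estimate (multiply $u_t-\Delta u=\partial_i R_1^i$ by $u$ and integrate by parts to obtain $\|\nabla u\|_{L^2(\R^n\times\R^+)}\le c\|R_1\|_{L^2(\R^n\times\R^+)}$). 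The standard singular-integral theorem on spaces of homogeneous type then gives $L^p$ boundedness for all $1<p<\infty$; choosing $p=n+4$ yields the required estimate, which combined with the support assumption and the definition of $\|R_1\|_{Y^1_T}$ completes the proof.
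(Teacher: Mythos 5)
Your overall architecture (Duhamel, reduction by translation and scaling invariance, near/far splitting, Young's inequality plus parabolic Calder\'on--Zygmund for the top-order term $\nabla V\nabla R_1$) is the right one, and your treatment of the $L^{n+4}$ component essentially coincides with the paper's. But two of the three components of the $X_T$-norm are not actually controlled by your plan, and both failures trace back to never using the full-cylinder $L^1$ and $L^2$ pieces of $\Vert\cdot\Vert_{Y^0_T}$ and $\Vert\cdot\Vert_{Y^1_T}$, which are included precisely for this lemma. First, the far-region ``geometric factor'' does not exist here: under second-order scaling, covering $B_1(0)\times(2^{-m-1},2^{-m})$ by about $2^{nm/2}$ cylinders of radius $2^{-m/2}$ and applying H\"older with the $L^{(n+4)/2}$-component of $Y^0_T$ gives $\Vert R_0\Vert_{L^1(Q_m(y))}\le c\,2^{-mn/2}\Vert R_0\Vert_{Y^0_T}$ per cylinder, so each time slab contributes about $\Vert R_0\Vert_{Y^0_T}$ with no decay in $m$ and the sum over scales diverges; the computation that produced $\gamma=2^{-1/4}$ in Lemma \ref{xy} is special to the fourth-order scaling and the cubic structure there. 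The paper avoids this entirely: since $Y^0_T$ contains $R^{-n}\Vert f\Vert_{L^1(B_R\times(0,R^2))}$ and $Y^1_T$ contains $R^{-n/2}\Vert f\Vert_{L^2(B_R\times(0,R^2))}$, the far contribution is bounded by $\sum_{y\in\Z^n}e^{-c_1|y|}\int_{B_1(y)\times(0,1)}(|R_0|+|R_1|)\,dx\,dt\le c\Vert R\Vert_{Y_T}$ directly, with no dyadic decomposition. (Also, your near-region pairing is one derivative too high: the $X_T$-norm of this section requires a pointwise bound on $h$ itself, not on $\nabla h$, so the kernels are $\Phi$ against $R_0$ and $\nabla\Phi$ against $R_1$, as in the paper; your claimed bound on $\Vert\nabla^2\Phi\Vert_{L^{(n+4)/(n+2)}(Q')}$ is false, since $\nabla^2\Phi$ behaves like the Calder\'on--Zygmund kernel $(t^{1/2}+|x|)^{-n-2}$ and is in no $L^p$, $p\ge 1$, near the singularity.)

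Second, you give no workable argument for the Carleson-type term, i.e. $\Vert\nabla h\Vert_{L^2(B_1(0)\times(0,1))}$ after rescaling. This integral goes all the way down to $t=0$, where the only information carried by $Y_T$ is the full-cylinder $L^1$/$L^2$ control (the $L^{(n+4)/2}$ and $L^{n+4}$ components live only on upper half-cylinders), so the H\"older pairing you describe has nothing to pair with there; nor can one integrate pointwise-in-time gradient bounds, since $\Vert\nabla h(t)\Vert_{L^\infty}$ is only of order $t^{-1/2}$. The paper instead runs a localized energy (Caccioppoli) estimate: multiply $(\partial_t-\Delta)h=R_0+\nabla R_1$ by $\eta^2 h$, integrate by parts and use Young's inequality to obtain $\partial_t\int\eta^2|h|^2+\int\eta^2|\nabla h|^2\le c\int_{B_2(0)}\bigl(|h|^2+|h|\,|R_0|+|R_1|^2\bigr)$, then integrate in time and use the already established $L^\infty$ bound for $h$ together with exactly those $L^1$ and $L^2$ components of $Y^0_T$, $Y^1_T$. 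Adding this energy step, and using the full-cylinder pieces of the $Y$-norms in the far-field and pointwise estimates, is what your proposal is missing; with these corrections the remainder of your outline goes through.
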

\begin{proof}
First of all we note that by Lemma \ref{homog2nd} we can assume without loss of generality that $h_0=0$. Next we note that the estimate \eqref{firstai} is invariant under translations and the scaling defined in \eqref{scalingR}. Hence it suffices to show that
\begin{align*}
|h(0,1)|+||\nabla h||_{L^2(B_1(0)\times(0,1))}+||\nabla h||_{L^{n+4}(B_1(0)\times(\frac{1}{2},1))}\le c||R||_{Y_T},
\end{align*}
for some $T>1$.

The estimate for the $L^\infty$-norm of $h$ follows from arguments similar to the ones used in the proof of Lemma \ref{xy}. More precisely, we decompose
\begin{align*}
|h(0,1)| \le& |\int_Q \Phi(x,1-t)R(x,t)dx dt|+|\int_{\R^n\times (0,1) \backslash Q} \Phi(x,1-t)R(x,t)dx dt|\\
\le& I+II,
\end{align*}
where we let again $Q=B_1(0)\times [\frac12,1)$ and $Q'=B_1(0) \times (0,\frac12)$.

Now we estimate $I$ by
\begin{align*}
I\le& ||\Phi||_{L^{\frac{n+4}{n+2}}(Q')}||R_0||_{L^{\frac{n+4}{2}}(Q)}+||\nabla \Phi||_{L^{\frac{n+4}{n+3}}(Q')}||R_1||_{L^{n+4}(Q)}\le c||R||_{Y_T},
\end{align*}
where we used that
\begin{align*}
||\Phi||_{L^{\frac{n+4}{n+2}}(\R^n\times [0,1])}+||\nabla \Phi||_{L^{\frac{n+4}{n+3}}(\R^n\times [0,1])}\le c,
\end{align*}
which is a consequence of \eqref{kernel2b}.

Integration by parts and \eqref{phi} yield
\begin{align*}
II \le& c\sum_{y\in \Z^n} \int_{0}^1 \int_{B_1(y)} e^{-c_1|x|}(|R_0(x,t)|+|R_1(x,t)|)dx dt\\
\le& c||R||_{Y_T}.
\end{align*}
Combining the above estimates we conclude
\begin{align*}
|h(0,1)|\le c||R||_{Y_T}.
\end{align*}

In order to estimate the $L^2$-norm of $\nabla h$ we multiply the equation $(\partial_t-\Delta)h=R_0+\nabla R_1$ with $\eta^2 h$, where $\eta$ is defined as in the proof of Lemma \ref{homog2nd}, integrate by parts and use Young's inequality to get
\begin{align*}
\partial_t \int_{\R^n} \eta^2 |h|^2+\int_{\R^n}\eta^2 |\nabla h|^2 \le c\int_{B_2(0)}(|h|^2+|h||R_0|+|R_1|^2).
\end{align*}
Integrating in time and using the pointwise estimate for $h$ yields
\begin{align*}
||\nabla h||_{L^2(B_1(0)\times(0,1))} \le c||R||_{Y_T}.
\end{align*}

Hence it remains to estimate the $L^{n+4}$-norm of $\nabla h$ on $Q$. Arguing as in the estimate for $II$ above we have 
\begin{align*}
\sup_{(x,t)\in Q}|& \int_{\R^n\times(0,1)\backslash B_2(0)\times (\frac14,1)} \Big( \nabla \Phi(x-y,t-s)R_0(y,s)\\
&- \nabla^2 \Phi(x-y,t-s) R_1(y,s) \Big)dy ds|\le c||R||_{Y_T}
\end{align*}
and hence we can assume that the support of $R_0$ and $R_1$ is contained in $B_2(0)\times (\frac14,1)$. In this situation we can use an argument involving singular integrals and the Young inequality as in the proof of Lemma \ref{xy} to finish the estimate for the $L^{n+4}$-norm of $\nabla h$.
\end{proof}

Since we know that the linearization of the operator
\begin{align*}
\partial_t+2\text{Ric}(\cdot)+P_\delta(\cdot)
\end{align*} 
at $g=\delta$ is $\partial_t-\Delta$ (see e.g. \cite{guenther02}) we
can argue as in the proof of Theorem \ref{main} to get the following result
\begin{theorem}\label{riccimain}
There exists $\varepsilon>0$, $C>0$ such that for every metric $g_0\in
  L^\infty(\R^n)$ satisfying $||g_0-\delta||_{L^\infty(\R^n)}<\varepsilon$
  there exists a global analytic solution $g\in \delta+X_\infty$ of the Ricci-DeTurck flow \eqref{RicciDe} with $g(\cdot,0)=g_0$ and $||g-\delta||_{X_\infty}\le C||g_0-\delta||_{L^\infty(\R^n)}$. The solution is unique in the ball $B^{X_\infty}(\delta,C\eps)=\{g|\ \ ||g-\delta||_{X_\infty}\le C\eps\}$.

More precisely there exists $R>0$, $c>0$ such that for every $k\in \N_0$ and every multiindex $\alpha\in \N_0^n$ we have the estimate
\begin{align}
  \sup_{x\in \R^n}\sup_{ t>0} |(t^{\frac{1}{2}} \nabla)^\alpha (t\partial_t)^k (g-\delta)(x,t)|\le c||g_0-\delta||_{L^\infty(\R^n)}R^{|\alpha|+k}(|\alpha|+k)!. \label{anaRD}
\end{align}  
Moreover the solution $g$ depends analytically on $g_0$. 
\end{theorem}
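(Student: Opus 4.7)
The plan is to imitate, step by step, the proof of Theorem \ref{main}, replacing the biharmonic heat kernel by the classical one and the Lemmas \ref{homog}, \ref{yx}, \ref{xy} by their second order analogues \ref{homog2nd}, \ref{first}, \ref{firsti} that have already been established. Writing $h=g-\delta$ and $h_0=g_0-\delta$, the system \eqref{ricci} is equivalent to the integral equation
\[
h=Sh_0+V(R_0[h]+\nabla R_1[h]),
\]
where $Sh_0(x,t)=\int_{\R^n}\Phi(x-y,t)h_0(y)\,dy$ and $Vf(x,t)=\int_0^t\int_{\R^n}\Phi(x-y,t-s)f(y,s)\,dy\,ds$. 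Define $F_{h_0}:X_T\to X_T$ by $F_{h_0}(h)=Sh_0+V(R_0[h]+\nabla R_1[h])$. Combining Lemma \ref{homog2nd} (which controls $Sh_0$ in $X_T$ by $||h_0||_{L^\infty(\R^n)}$), Lemma \ref{first} (quadratic bound and Lipschitz property of the nonlinearity in $Y_T$) and Lemma \ref{firsti} (boundedness of $V$ from $Y_T$ into $X_T$) yields
\[
||F_{h_0}(h)||_{X_T}\le c\bigl(||h_0||_{L^\infty(\R^n)}+||h||_{X_T}^2\bigr)
\]
together with a contraction estimate on a small ball $X_T^{\varepsilon_0}$. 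Taking $T=\infty$ and $||h_0||_{L^\infty(\R^n)}<\varepsilon$ small enough, the Banach fixed point theorem produces a unique $h\in X_\infty$ in that ball with $||h||_{X_\infty}\le C||h_0||_{L^\infty(\R^n)}$, and $g=\delta+h$ is the required global solution.

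For analyticity of the map $h_0\mapsto h$ I would apply the analytic implicit function theorem to
\[
G(h_0,h)=h-Sh_0-V(R_0[h]+\nabla R_1[h]).
\]
This $G$ is analytic because $R_0$ and $R_1$ admit convergent power series expansions in $h$ (Lemma \ref{first}, which is based on the analytic expansion of $(\delta+h)^{-1}$), while $S$ and $V$ are bounded linear operators by Lemmas \ref{homog2nd} and \ref{firsti}. Since $G(0,0)=0$ and $D_hG(0,0)=id$, the theorem yields an analytic operator $A:B_\varepsilon(0)\subset L^\infty(\R^n)\to X_T$ with $G(h_0,A(h_0))=0$; by the uniqueness part of the fixed point construction, $A(h_0)$ coincides with the solution produced above.

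For analyticity of $h(x,t)$ in the spatial and temporal variables I would re-run the parameter trick from the proof of Theorem \ref{main}. Fix $T<\infty$ and, for $a$ in a small ball of $\R^n$ around $0$ and $\tau$ near $1$, consider
\[
\widetilde G(a,\tau,h_0,h)=h-Sh_0-V\bigl(\tau(R_0[h]+\nabla R_1[h])+(\tau-1)\Delta h-a\cdot\nabla h\bigr).
\]
Lemma \ref{kernela} gives $||V(a\cdot\nabla h)||_{X_T}\le c|a|T^{1/2}||h||_{X_T}$, and the extra linear term $(\tau-1)\Delta h$ is absorbed by Lemma \ref{firsti} with small coefficient $|1-\tau|$; thus $\widetilde G$ is analytic in all its arguments, satisfies $\widetilde G(0,1,0,0)=0$ and $D_h\widetilde G(0,1,0,0)=id$. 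The implicit function theorem yields an analytic solution $\widetilde A(a,\tau,h_0)$, and uniqueness forces the identity $A(h_0)(x-at,\tau t)=\widetilde A(a,\tau,h_0)$. Differentiating in $(a,\tau)$ at $(0,1)$ converts analyticity in the parameters into joint analyticity of $h$ in $x$ and $t$, while the Cauchy estimates for $\widetilde A$, combined with a rescaling based on \eqref{scalingR}, deliver \eqref{anaRD}.

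The main technical point, and the only real obstacle, is the verification that the augmented nonlinearity appearing in $\widetilde G$ still lands in $Y_T$ with the analytic and Lipschitz structure required for the implicit function theorem uniformly in a neighbourhood of $(a,\tau)=(0,1)$. Since the auxiliary summands $(\tau-1)\Delta h$ and $-a\cdot\nabla h$ are linear in $h$ with arbitrarily small coefficients in the relevant norms, they do not destroy the quadratic contraction structure, and every other analytical ingredient is already in place; the argument is thus an essentially mechanical transcription of the Willmore proof to the second order parabolic setting.
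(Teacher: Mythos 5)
Your proposal is correct and follows essentially the same route as the paper: the authors simply remark that, after writing $h=g-\delta$ and using the form \eqref{ricci}, one argues exactly as in the proof of Theorem \ref{main}, with Lemma \ref{homog2nd}, Lemma \ref{first} and Lemma \ref{firsti} replacing Lemma \ref{homog}, Lemma \ref{yx} and Lemma \ref{xy}, followed by the same fixed point, implicit function theorem and $(a,\tau)$-parameter arguments. Your transcription, including the modified nonlinearity $\tau(R_0[h]+\nabla R_1[h])+(\tau-1)\Delta h-a\cdot\nabla h$ (with $(\tau-1)\Delta h$ read as $\nabla\bigl((\tau-1)\nabla h\bigr)\in\nabla Y_T^1$), matches the intended argument.
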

This result improves Theorem $1.2$ of \cite{schnuerer08} since the solution we construct is unique in $X_\infty$, analytic in $x$ and $t$ and the initial metric $g_0$ is only assumed to be in $L^\infty(\R^n)$. We like to remark that on general complete manifolds local solutions of \eqref{RicciDe} have been constructed by Simon \cite{simon02} for initial metrics $g_0\in C^0$ which are close to a smooth metric with bounded sectional curvature (see also \cite{simon08}). 

The relation between the solution of \eqref{RicciDe} constructed in Theorem \ref{riccimain} and a solution of the Ricci flow is illustrated in the following remark.
\begin{bem}\label{equivalence}
Let $g_0$ be a smooth initial metric satisfying $||g_0-\delta||_{L^\infty(\R^n)}<\varepsilon$ and let $g\in \delta+X_\infty$ be the analytic solution  of \eqref{RicciDe} constructed in Theorem \ref{riccimain}. It is shown in \cite{schnuerer08} that there exists a smooth family of diffeomorphisms $\varphi:\R^n\times[0,\infty)\rightarrow \R^n$ with $\varphi(\cdot,0)=id$ such that the family of metrics $\tilde{g}(x,t)=(\varphi(x,t))^\star g(x,t)$ is a solution of the Ricci flow \eqref{Ricci} with initial data $g_0$.
\end{bem}

\section{Mean curvature flow}
\setcounter{equation}{0}
Let $M^{n}$ be a $n$-dimensional orientable manifold and let $F_0:M \rightarrow \R^{n+m}$ ($m\in \N$) be an immersion. We say that the family of immersions $F:M\times [0,T)\rightarrow \R^{n+m}$ solves the mean curvature flow with initial condition $F_0$ if
\begin{align}
\partial_t F=& \bf{H}\ \ \ \rm \text{on}\ \ M\times(0,T)\ \ \ \text{and}\nonumber\\
F(\cdot,0)=& F_0, \label{meang}
\end{align}
where $\bf{H}$$(x,t)$ is the mean curvature vector of $M_t=F(M,t)$ at $F(x,t)$. Here we are interested in the case $M=\R^n$ and where $F_0(x)=(x,f_0(x))$, $f_0:\R^n\rightarrow \R^m$, is the graph of $f_0$ (entire graph). More precisely we consider $f_0\in C^{0,1}(\R^n,\R^m)$ and we assume that the Lipschitz norm of $f_0$ is "small". Then we construct solutions $f:\R^n\times [0,\infty) \rightarrow \R^m$ of the parabolic system
\begin{align}
\partial_t f =& g^{ij} \frac{\partial^2 f}{\partial x^i \partial x^j},\nonumber\\
f(\cdot,0)=&f_0, \label{meanh}
\end{align}
where $g_{ij}=\delta_{ij}+\langle \frac{\partial f}{\partial x^i},  \frac{\partial f}{\partial x^j} \rangle$. 

For $m=1$ we calculate
\begin{align*}
g^{ij}=\delta_{ij}-\frac{\nabla_i f \nabla_j f}{1+|\nabla f|^2}
\end{align*}
and therefore we have
\begin{align*}
\partial_t f=\sqrt{1+|\nabla f|^2} \,\ \text{div}\Big(\frac{\nabla f}{\sqrt{1+|\nabla f|^2}}\Big) 
\end{align*}
and hence we recover the well-known equation for the mean curvature flow for graphs in codimension one. 

Concerning the relation between solutions of the equations \eqref{meang} and \eqref{meanh} it was shown in \cite{wang04}, Proposition $2.2$, that for every graphical solution $F$ of \eqref{meang} there exists a family of diffeomorphisms $r:\R^n\times [0,\infty)\rightarrow \R^n$ such that $\tilde{F}(x,t)=F(r(x,t),t)$ can be written as $\tilde{F}(x,t)=(x,f(x,t))$ and $f$ is a solution of \eqref{meanh}. Conversely, if $f$ is a solution of \eqref{meanh}, then $\tilde{F}(x,t)=(x,f(x,t))$ is a solution of \eqref{meang}.  

Next we note that \eqref{meanh} can equivalently be written as
\begin{align}
(\partial_t-\Delta) f =& (g^{ij}-\delta^{ij}) \frac{\partial^2 f}{\partial x^i \partial x^j}=:M[f],\nonumber\\
f(\cdot,0)=&f_0 \label{mean}
\end{align}
and this system is invariant under the scaling ($\lambda>0$)
\begin{align}
f_\lambda(x,t)=\frac{1}{\lambda}f(\lambda x,\lambda^2 t). \label{scalingmean}
\end{align}
For every $0<T\le \infty$ we define the function spaces
\begin{align*}
X_T=\{f |\,\ ||f||_{X_T}=&\sup_{0<t<T}||\nabla f(t)||_{L^\infty(\R^n)}\\
&+\sup_{x\in \R^n}\sup_{0<R^2<T}R^{\frac{2}{n+4}} \Vert \nabla^2 f \Vert_{L^{n+4}(B_R(x) \times (\frac{R^2}{2},R^2))}<\infty \}\ \ \text{and} \\ 
Y_T= \{ g|\,\ ||g||_{Y_T} =&  \sup_{x\in \R^n} \sup_{0< R^2 < T} 
 R^{\frac2{n+4}} ||g||_{L^{n+4}(B_R(x) \times (\frac{R^2}{2}, R^2))}  <\infty\}.
\end{align*}

Now we are in a position to formulate our main Theorem of this subsection.
\begin{theorem}\label{mcf}
There exists $\eps>0$, $C>0$ such that for every map $f_0:\R^n\rightarrow \R^m$ satisfying $||f_0||_{C^{0,1}(\R^n,\R^m)}<\eps$ there exists a global analytic solution $f\in X_\infty$ of \eqref{meanh} with $f(\cdot,0)=f_0$ and $||f||_{X_\infty}\le C||f_0||_{C^{0,1}(\R^n,\R^m)}$. The solution is unique in the ball $B^{X_\infty}(0,C\eps)=\{f|\ \ ||f||_{X_\infty}\le C\eps\}$.

More precisely, there exists $R>0$, $c>0$ such that for every $k\in \N_0$ and every multiindex $\alpha \in \N^n_0$ we have the estimate 
\begin{align}
 \sup_{x\in \R^n}\sup_{ t>0} |(t^{\frac{1}{2}} \nabla)^\alpha (t\partial_t)^k \nabla f(x,t)|\le c||f_0||_{C^{0,1}(\R^n,\R^m)}R^{|\alpha|+k}(|\alpha|+k)!. \label{anaMCF}
\end{align}  
Moreover the solution $f$ depends analytically on $f_0$. 
\end{theorem}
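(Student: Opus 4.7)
My plan is to follow the same three step blueprint used for Theorem \ref{main} and Theorem \ref{riccimain}. First I rewrite \eqref{mean} as the integral equation $f = Sf_0 + VM[f]$, where $M[f] = (g^{ij}-\delta^{ij})\partial^2_{ij} f$, $S$ denotes convolution with the heat kernel $\Phi$ in space and $V$ is the corresponding parabolic Duhamel operator. Since $g_{ij} = \delta_{ij} + \langle \partial_i f,\partial_j f\rangle$, the matrix $g^{ij}-\delta^{ij}$ is real-analytic in $\nabla f$ on a neighbourhood of $0$ and vanishes to second order there, so $M[f]$ admits an absolutely convergent power series expansion whose lowest order term is quadratic in $\nabla f$ times $\nabla^2 f$. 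The goal is to show that $F_{f_0}(f) = Sf_0 + VM[f]$ is a contraction on a small ball in $X_\infty$; the two core estimates needed are the nonlinearity bound $\|M[f]\|_{Y_T}\le c\|f\|_{X_T}^3$ (with the analogous Lipschitz version) and the inhomogeneous linear bound $\|Vg\|_{X_T}\le c\|g\|_{Y_T}$, together with $\|Sf_0\|_{X_T}\le c\|f_0\|_{C^{0,1}(\R^n,\R^m)}$. The last of these is immediate from Lemma \ref{homog2nd} applied to $\nabla Sf_0 = S(\nabla f_0)$, since $\nabla f_0 \in L^\infty(\R^n,\R^m)$ and the $X_T$ norm of $f$ involves only $\nabla f$ in $L^\infty$ and $\nabla^2 f$ in the scale invariant $L^{n+4}$ norm.

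For the nonlinearity bound I would expand $g^{ij}-\delta^{ij}$ as a Neumann series in the matrix $A_{ij} = \langle \partial_i f,\partial_j f\rangle$, convergent whenever $\|\nabla f\|_{L^\infty}$ is small, and use the pointwise inequality $|M[f]|\le c\|\nabla f\|_{L^\infty}^2|\nabla^2 f|$. Applying H\"older's inequality on the cylinder $B_R(x)\times (R^2/2,R^2)$ then yields the cubic $Y_T$ bound, and analyticity of $M[\cdot]$ as a map $X_T^\gamma\to Y_T$ follows from the same series expansion; the Lipschitz estimate is entirely analogous.

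The main work lies in the inhomogeneous linear estimate, which I would prove exactly as in Lemma \ref{xy} and Lemma \ref{firsti}. By translation and the scaling $f_\lambda(x,t)=\lambda^{-1}f(\lambda x,\lambda^2 t)$ it suffices to bound $|\nabla Vg(0,1)|$ and $\|\nabla^2 Vg\|_{L^{n+4}(B_1(0)\times(1/2,1))}$ by $c\|g\|_{Y_T}$, and I would split the integration into a near region $B_2(0)\times(1/4,1)$ and its complement. On the complement, the exponential decay \eqref{phi} of $\Phi,\nabla\Phi,\nabla^2\Phi$ combined with a dyadic decomposition of the spacetime cube and a geometric summation of the local $Y_T$ mass (as in the proof of Lemma \ref{xy}) yields a uniform pointwise bound $\le c\|g\|_{Y_T}$. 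On the near region, the bound on $\nabla Vg(0,1)$ follows from H\"older's inequality with the locally finite $L^{(n+4)/(n+3)}$ norm of $\nabla\Phi$, which is controlled by \eqref{kernel2b}; the harder $L^{n+4}$ bound on $\nabla^2 Vg$ is a parabolic Calder\'on--Zygmund estimate. Indeed, the map $g\mapsto \nabla^2 Vg$ is a singular integral operator on the space of homogeneous type $(\R^n\times\R,d,m^{n+1})$ with $d((x,t),(y,s))=\max\{|x-y|,|t-s|^{1/2}\}$, and its $L^2$ boundedness follows by testing $h_t-\Delta h = g$ against $h$ (giving $\|\nabla h\|_{L^2}\le c\|g\|_{L^2}$, then $\|\nabla^2 h\|_{L^2}\le c\|g\|_{L^2}$ after a second integration by parts); the passage to $L^{n+4}$ is then standard Calder\'on--Zygmund theory in homogeneous spaces.

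Once these two estimates are in place, the Banach fixed point theorem produces a unique $f\in X_\infty$ with $\|f\|_{X_\infty}\le C\|f_0\|_{C^{0,1}(\R^n,\R^m)}$ whenever $\|f_0\|_{C^{0,1}(\R^n,\R^m)}<\eps$, and the analytic implicit function theorem applied to $G(f_0,f)=f-Sf_0-VM[f]$, which satisfies $G(0,0)=0$ and $D_fG(0,0)=\mathrm{id}$, gives the analytic dependence of $f$ on $f_0$. For the joint space-time analyticity I would use the $(a,\tau)$-trick from the proof of Theorem \ref{main}: introducing $\tilde G(a,\tau,f_0,f)=f-Sf_0-V\tilde M_{a,\tau}[f]$ with $\tilde M_{a,\tau}[f]=\tau M[f]+(1-\tau)\Delta f - a\nabla f$, another application of the analytic implicit function theorem produces an analytic map $\tilde A(a,\tau,f_0)$, and by uniqueness $A(f_0)(x-at,\tau t)=\tilde A(a,\tau,f_0)$. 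Differentiating in $a$ and $\tau$ at $(0,1)$ and combining with Cauchy estimates and the scaling invariance of $X_\infty$ yields the derivative bound \eqref{anaMCF}. The main obstacle is the parabolic Calder\'on--Zygmund step for $\nabla^2 V$ in $L^{n+4}$, but this is resolved along the same lines as in Lemma \ref{xy}.
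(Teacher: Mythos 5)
Your proposal is correct and follows essentially the same route as the paper: the estimates you outline are exactly Lemma \ref{itmcf} and Lemma \ref{mcfyx} (the latter proved, as you say, by scaling/translation reduction, the near/far splitting with the heat kernel bounds \eqref{kernel2b}, \eqref{phi}, the dyadic covering argument, and the parabolic Calder\'on--Zygmund step as in Lemma \ref{xy}), after which the Banach fixed point theorem, the analytic implicit function theorem and the $(a,\tau)$-trick of Theorem \ref{main} give existence, uniqueness, analytic dependence and \eqref{anaMCF}. Two small repairs: since here the equation is $f_t-\Delta f=M[f]$ rather than $u_t+\Delta^2u=f[u]$, the correct modified nonlinearity is $\tilde M_{a,\tau}[f]=\tau M[f]+(\tau-1)\Delta f-a\nabla f$ (your $(1-\tau)\Delta f$ has the wrong sign), and the $L^2$ base case of the Calder\'on--Zygmund estimate should be obtained by testing $h_t-\Delta h=g$ against $\Delta h$ (or via Plancherel), not against $h$, because the source is not in divergence form here, so testing with $h$ does not yield $\Vert\nabla^2 h\Vert_{L^2}\le c\Vert g\Vert_{L^2}$.
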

In the case $m=1$ Ecker \& Huisken \cite{ecker89,ecker91} showed the existence of a global solution of the mean curvature flow of entire graphs for any initial data which is locally Lipschitz. 
\newline
We remark that for $m>1$, one needs at least a certain "smallness" condition for the Lipschitz norm of the initial data in view of an example (due to Lawson \& Osserman \cite{lawson77}) of a minimal graph $F:\R^4 \rightarrow \R^7$ which is Lipschitz continuous but not $C^1$. 

For compact manifolds and Lipschitz initial data $f_0$ with locally small Lipschitz norm, Wang \cite{wang04a} showed the existence of a local smooth solution of the mean curvature flow.
Moreover, for $M=\Sigma_1 \times \Sigma_1$, where $\Sigma_1$ and $\Sigma_2$ are compact manifolds of constant curvature, and initial maps $f_0:\Sigma_1\rightarrow \Sigma_2$ which are Lipschitz with small Lipschitz norm, the mean curvature flow has been studied by Wang \cite{wang02} (see also \cite{tsai04}). 

In the special case $m=n$ and $f_0=\nabla u_0 \in C^{0,1}$ for some $u_0:\R^n \rightarrow \R$ (so called Lagrangian graphs) satisfying $-(1-\delta)id \le \nabla^2 u_0 \le (1-\delta)id$, where $0<\delta<1$ is arbitrary, a global smooth solution of the Lagrangian mean curvature flow for entire graphs has recently been constructed by Chau, Chen \& He \cite{chau}. 

In order to prove Theorem \ref{mcf} we start with the following Lemma.
\begin{lemma}\label{itmcf}
For every $0<T\le \infty$ and every $\gamma<1$ the operator $M[\cdot]:X^{\gamma}_T=\{f\in X_T| ||f||_{X_T}<\gamma\} \rightarrow Y_T$ is analytic and we have the estimates
\begin{align}
 ||M[f]||_{Y_T} \le c||f||^2_{X_T} \label{estyxmcf}
\end{align}
for all $f\in X^\gamma_T$ and
\begin{align}
||M[f_1]-M[f_2]||_{Y_T} \le c(\gamma)(||f_1||_{X_T}+||f_2||_{X_T})||f_1-f_2||_{X_T} \label{estyxamcf}
\end{align}
for all $f_1,f_2 \in X^\gamma_T$.
\end{lemma}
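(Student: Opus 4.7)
The plan is to treat $M[f] = A^{ij}(\nabla f)\, \partial^2_{ij} f$ as a product of the analytic coefficient map $\nabla f \mapsto A^{ij}(\nabla f) := g^{ij}(\nabla f) - \delta^{ij}$ and the Hessian $\nabla^2 f$. The key observation is that $g_{ij} = \delta_{ij} + \partial_i f \cdot \partial_j f$, so a Neumann expansion gives
\begin{equation*}
A^{ij}(\nabla f) = \sum_{k=1}^\infty (-1)^k \bigl((\partial f \otimes \partial f)^k\bigr)^{ij},
\end{equation*}
which converges absolutely whenever $|\nabla f| < 1$. In particular $A^{ij}$ is real-analytic on $\{|\nabla f| < 1\}$, vanishes to second order at $\nabla f = 0$, and satisfies both $|A^{ij}(\nabla f)| \le C|\nabla f|^2$ and the Lipschitz-type bound $|A^{ij}(\nabla f_1) - A^{ij}(\nabla f_2)| \le C(\gamma)(|\nabla f_1| + |\nabla f_2|)|\nabla f_1 - \nabla f_2|$ on the ball $\{|\nabla f| \le \gamma\}$ for any $\gamma < 1$.

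For the quantitative estimate \eqref{estyxmcf}, since $f \in X_T^\gamma$ forces $\|\nabla f\|_{L^\infty(\R^n \times (0,T))} < \gamma < 1$, the pointwise bound above yields $|M[f]| \le C |\nabla f|^2 |\nabla^2 f|$. H\"older's inequality on each cylinder $B_R(x) \times (\frac{R^2}{2}, R^2)$ gives
\begin{equation*}
R^{\frac{2}{n+4}} \|M[f]\|_{L^{n+4}(B_R(x) \times (\frac{R^2}{2}, R^2))} \le C \|\nabla f\|_{L^\infty}^2 \cdot R^{\frac{2}{n+4}} \|\nabla^2 f\|_{L^{n+4}(B_R(x) \times (\frac{R^2}{2}, R^2))}.
\end{equation*}
Taking the supremum over $x$ and $R$ and absorbing one factor via $\|\nabla f\|_{L^\infty} \le \|f\|_{X_T} \le \gamma \le 1$ yields $\|M[f]\|_{Y_T} \le C(\gamma) \|f\|_{X_T}^2$. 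The Lipschitz bound \eqref{estyxamcf} then follows from the algebraic identity
\begin{equation*}
M[f_1] - M[f_2] = \bigl(A^{ij}(\nabla f_1) - A^{ij}(\nabla f_2)\bigr) \partial^2_{ij} f_1 + A^{ij}(\nabla f_2)\, \partial^2_{ij}(f_1 - f_2),
\end{equation*}
combined with the Lipschitz bound on $A^{ij}$ applied to the first summand and the quadratic bound applied to the second, followed by the same H\"older argument.

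Finally, analyticity of $M : X_T^\gamma \to Y_T$ is a direct consequence of the Neumann expansion: each term, multiplied by $\nabla^2 f$, defines a homogeneous polynomial map $P_k : X_T \to Y_T$ whose operator norm is bounded by $C^k$ via the same H\"older argument, so the resulting series converges geometrically on the open ball $X_T^\gamma$ when $\gamma < 1$; this is essentially the same mechanism as in Lemma \ref{YX}. The main, though fairly minor, point to verify is that the scaling weight $R^{\frac{2}{n+4}}$ built into $Y_T$ matches precisely the weight used for the $\nabla^2 f$ term in $X_T$, so that the extra factors $\|\nabla f\|_{L^\infty}^2 \le \gamma^2$ can indeed be pulled out cylinder by cylinder as powers of $\|f\|_{X_T}$ with constants independent of the cylinder; this matching is built into the definitions of the two spaces.
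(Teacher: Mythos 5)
Your proposal is correct and follows essentially the same route as the paper: the paper's (very terse) proof likewise reduces the lemma to the $L^\infty$ bounds on $g^{ij}-\delta^{ij}$ and on $g_1^{ij}-g_2^{ij}$ in terms of $\nabla f$ and $\nabla(f_1-f_2)$, combined with H\"older on the cylinders built into the definitions of $X_T$ and $Y_T$, with analyticity obtained by the same power-series (Neumann) expansion mechanism as in Lemma \ref{YX}. Your write-up simply makes explicit the details (the Neumann series, the splitting identity for $M[f_1]-M[f_2]$, and the matching of the weights $R^{\frac{2}{n+4}}$) that the paper leaves to the reader.
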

\begin{proof}
This is a consequence of the facts that for every $f\in X_T^{\gamma}$ we have
\begin{align*}
||g^{ij}-\delta^{ij}||_{L^\infty(\R^n)}\le c(||\nabla f||_{L^\infty(\R^n)})||\nabla f||_{L^\infty(\R^n)}
\end{align*}
and
\begin{align*}
||g_1^{ij}-g_2^{ij}||_{L^\infty(\R^n)}\le c(||\nabla f_1||_{L^\infty(\R^n)},||\nabla f_2||_{L^\infty(\R^n)})||\nabla (f_1-f_2)||_{L^\infty(\R^n)},
\end{align*}
where $g_l=\delta+\langle \nabla f_l,\nabla f_l \rangle$, $l\in \{1,2\}$.
\end{proof}
Next we have 
\begin{lemma}\label{mcfyx}
Let $0<T\le \infty$, $f_0\in C^{0,1}(\R^n,\R^m)$ and $M\in Y_T$. Then every solution $f$ of $(\partial_t-\Delta)f=M$ with $f(\cdot,0)=f_0$ is in $X_T$ and we have
\begin{align}
||f||_{X_T}\le c(||f_0||_{C^{0,1}(\R^n,\R^m)}+||M||_{Y_T}). \label{crucmcf}
\end{align}
\end{lemma}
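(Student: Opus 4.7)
The plan is to split $f=f^{(1)}+f^{(2)}$, where $f^{(1)}$ solves the homogeneous heat equation with initial data $f_0$ and $f^{(2)}$ is the Duhamel term with zero initial data,
\[
f^{(2)}(x,t)=\int_0^t\int_{\R^n}\Phi(x-y,t-s)M(y,s)\,dy\,ds.
\]
The homogeneous part is handled immediately by applying Lemma \ref{homog2nd} to the function $\nabla f^{(1)}$, which is a bounded solution of the heat equation with $L^\infty$ initial data $\nabla f_0$. Indeed, the conclusion of Lemma \ref{homog2nd} applied to $\nabla f^{(1)}$ controls $\|\nabla f^{(1)}\|_{L^\infty}$, $t^{1/2}\|\nabla^2 f^{(1)}\|_{L^\infty}$, and the Carleson-type $L^{n+4}$-norm of $\nabla^2 f^{(1)}$ over cylinders of the required form. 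This yields $\|f^{(1)}\|_{X_T}\le c\|f_0\|_{C^{0,1}(\R^n,\R^m)}$.

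For $f^{(2)}$, invariance of the estimate under translations and the scaling \eqref{scalingmean} reduces the task to proving, for some $T>1$,
\[
|\nabla f^{(2)}(0,1)|+\|\nabla^2 f^{(2)}\|_{L^{n+4}(B_1(0)\times(1/2,1))}\le c\|M\|_{Y_T}.
\]
For the pointwise bound on $\nabla f^{(2)}(0,1)$, I mimic the strategy from Lemma \ref{xy}: set $Q=B_1(0)\times[1/2,1)$ and split the integral into the contribution over $Q$ and its complement in $\R^n\times(0,1)$. The near contribution is estimated by H\"older's inequality using the estimate
\[
\|\nabla\Phi\|_{L^{(n+4)/(n+3)}(\R^n\times(0,1))}\le c,
\]
which is a direct consequence of \eqref{kernel2b} (the exponent matches since $(n+1)(n+4)/(n+3)>n$). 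For the far part I use the exponential decay \eqref{phi} of $\nabla\Phi$ away from $(0,1)$, cover $B_1(0)\times(2^{-m-1},2^{-m})$ by $\sim 2^{nm/2}$ parabolic cylinders $Q_m(y)=B_{2^{-m/2}}(y)\times(2^{-m-1},2^{-m})$, apply H\"older on each $Q_m(y)$ together with the definition of the $Y_T$-norm, and sum the resulting geometric series $\sum_m 2^{-m/2}\|M\|_{Y_T}$.

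For the $L^{n+4}$-bound on $\nabla^2 f^{(2)}$, I again separate the contribution from $M$ restricted to a parabolic neighborhood of $Q$, and use the same pointwise argument as above (integration by parts is not needed here since $M$ is not in divergence form) to dispose of the far part. It then remains to estimate $\nabla^2 f^{(2)}$ when $M$ is supported in $B_2(0)\times(1/4,1)$. The key step is a Calder\'on--Zygmund estimate: the operator $T_{kl}:M\mapsto\partial^2_{kl}f^{(2)}$ has kernel $\partial^2_{kl}\Phi$, which is a singular integral kernel in the parabolic metric $d((x,t),(y,s))=\max\{|x-y|,|t-s|^{1/2}\}$ on the space of homogeneous type $(\R^n\times\R,d,m^{n+1})$. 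Its $L^2\to L^2$-boundedness follows from the standard energy identity obtained by multiplying $\partial_t f^{(2)}-\Delta f^{(2)}=M$ by $\Delta f^{(2)}$ and integrating, which gives $\|\Delta f^{(2)}\|_{L^2}\le\|M\|_{L^2}$, and hence $\|\nabla^2 f^{(2)}\|_{L^2}\le c\|M\|_{L^2}$ after interchanging derivatives. Calder\'on--Zygmund theory on spaces of homogeneous type then upgrades this to $L^p$-boundedness for all $1<p<\infty$, in particular for $p=n+4$.

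The main obstacle is the $L^{n+4}$-estimate in the last paragraph, since it is the step where the simple convolution bounds (Young's inequality with $\|\nabla^2\Phi(t)\|_{L^1}\sim t^{-1}$) fail due to the non-integrable time singularity. However, this is exactly the same difficulty that was already resolved in the proof of Lemma \ref{xy}, and the same Calder\'on--Zygmund argument carries over verbatim once the heat kernel replaces the biharmonic heat kernel and parabolic scaling $t\leftrightarrow |x|^2$ replaces the bi-Laplacian scaling $t\leftrightarrow|x|^4$.
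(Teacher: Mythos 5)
Your proposal is correct and follows essentially the same route as the paper: reduction of the initial data via Lemma \ref{homog2nd}, translation/scaling reduction to the unit cylinder, the near/far splitting with $\Vert\nabla\Phi\Vert_{L^{\frac{n+4}{n+3}}}$ and the exponential decay \eqref{phi} together with the covering by $\sim 2^{nm/2}$ parabolic cylinders, and the Calder\'on--Zygmund argument on the parabolic space of homogeneous type for the compactly supported part, exactly as in Lemma \ref{xy}. (Only a cosmetic remark: the finiteness of $\Vert\nabla\Phi\Vert_{L^{\frac{n+4}{n+3}}(\R^n\times(0,1))}$ uses both $(n+1)\frac{n+4}{n+3}>n$ and $(n+1)\frac{n+4}{n+3}<n+2$, the latter ensuring integrability of the time singularity; both inequalities do hold.)
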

\begin{proof}
First of all we observe that by Lemma \ref{homog2nd} and the above remark we can assume without loss of generality that $f_0=0$. From the translation and scaling invariance it follows that we only have to show that for some $T>1$ we have
\begin{align*}
|\nabla f(0,1)|+||\nabla^2 f||_{L^{n+4}(B_1(0)\times(\frac{1}{2},1))}\le c||M||_{Y_T}.
\end{align*}
The proof of this estimate follows from arguments similar to the ones used in the proof of Lemma \ref{xy}. Namely, we decompose
\begin{align*}
|\nabla f(0,1)|\le& |\int_Q \nabla \Phi(x,1-t)M(x,t) dx dt|+|\int_{\R^n\times(0,1)\backslash Q} \nabla \Phi(x,1-t)M(x,t) dx dt|\\
=&I+II
\end{align*}
where again $Q=B_1(0)\times [\frac12,1)$, and we estimate ($||\nabla \Phi||_{L^{\frac{n+4}{n+3}}(\R^n \times (0,1))}\le c$)
\begin{align*}
I\le& c||\nabla \Phi||_{L^{\frac{n+4}{n+3}}(Q')}||M||_{L^{n+4}(Q)}\le c||M||_{Y_T}.
\end{align*}
Moreover, we use \eqref{phi} to get
\begin{align*}
II\le c\sup_{y\in \Z^n}\sum_{m=0}^\infty \int_{2^{-m-1}}^{2^{-m}} \int_{B_1(y)}|M(x,t)|dx dt.
\end{align*} 
Next we claim that there exists $0<\gamma<1$ such that
\begin{align*}
\int_{2^{-m-1}}^{2^{-m}} \int_{B_1(0)}|M(x,t)|dx dt\le c\gamma^m ||M||_{Y_T}
\end{align*}
which then finishes the proof of the $L^\infty$-estimate. In order to proof this claim we cover $B_1(0)\times (2^{-m-1},2^{-m})$ by approximately $2^{\frac{nm}{2}}$ cylinders of the form $Q_m(y):= B_{2^{-\frac{m}{2}}}(y)\times (2^{-m-1},2^{-m})$ and we use H\"older's inequality to estimate
\begin{align*}
||M||_{L^1(Q_m(y))}\le c2^{\frac{2m-m(n+2)(n+3)}{2(n+4)}}||M||_{Y_T}\le c2^{\frac{-m(n+1)}{2}}||M||_{Y_T}
\end{align*}
and hence this proves the claim with $\gamma=\frac{\sqrt{2}}{2}$.

In order to finish the proof of the Lemma it remains to show that 
\begin{align}
||\int_0^t \int_{\R^n}\nabla^2 \Phi(x-y,t-s)M(y,s) dy ds||_{L^{n+4}(Q)}\le& c||M||_{Y_T}.\label{restmcf}
\end{align}
By using similar arguments as above we get 
\begin{align*}
\sup_{(x,t)\in Q}|\int_{\R^n\times(0,1)\backslash B_2(0)\times (\frac14,1)} \nabla^2 \Phi(x-y,t-s)M(y,s) dy ds|\le c||M||_{Y_T}
\end{align*}
and therefore we can assume that the support of $M$ is contained in $B_2(0)\times (\frac14,1)$. In this situation we can use an argument involving singular integrals as in the proof of Lemma \ref{xy} to finish the proof of \eqref{restmcf}.
\end{proof}
Theorem \ref{mcf} now follows from an application of the Banach fixed point theorem and the implicit function theorem as in the proof of Theorem \ref{main}.

Arguing as in the case of the Willmore flow we get an existence result for self-similar solutions of the mean curvature flow for entire graphs in higher codimensions. 
\begin{corollary}\label{selfmcf}
  There exists $\varepsilon>0$, $C>0$ such that if $f_0\in C^{0,1}(\R^n,\R^m)$ is
  self-similar (i.e. $f_0(x)=\frac{1}{\lambda}f_0(\lambda x)$ for every $x\in \R^n$, $\lambda>0$) with $||f_0||_{C^{0,1}(\R^n,\R^m)}<\varepsilon$, then there
  exists an analytic, self-similar solution $f\in X_\infty$ of the
  mean curvature flow \eqref{meanh} which satisfies the estimates $||f||_{X_\infty}\le C||f_0||_{C^{0,1}(\R^n,\R^m)}$ and \eqref{anaMCF}. The solution is unique in the ball $B^{X_\infty}(0,C\eps)$. Moreover $f$ can be written as $f(x,t)=\sqrt{t}\xi(\frac{x}{\sqrt{t}})$, where $\xi$ is an analytic solution of the elliptic system
\begin{align*}
h^{ij}(y)\nabla^2_{ij} \xi(y)+\frac{1}{2}(y\cdot \nabla \xi-\xi)=0,
\end{align*}
where $h_{ij}=\delta_{ij}+\langle \nabla_i \xi, \nabla_j \xi \rangle$.
\end{corollary}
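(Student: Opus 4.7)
The plan is to follow the strategy already used for the Willmore flow in Theorem \ref{selfa} essentially verbatim, exploiting three ingredients: the unique solvability from Theorem \ref{mcf}, the scaling invariance \eqref{scalingmean} of the mean curvature system \eqref{meanh}, and the self-similarity assumption \eqref{self} on $f_0$. First I would apply Theorem \ref{mcf}: since $\|f_0\|_{C^{0,1}(\R^n,\R^m)}<\varepsilon$, there is an analytic solution $f\in X_\infty$ of \eqref{meanh} with $f(\cdot,0)=f_0$, with $\|f\|_{X_\infty}\le C\|f_0\|_{C^{0,1}(\R^n,\R^m)}$, which is unique in the ball $B^{X_\infty}(0,C\varepsilon)$, and satisfies the derivative bound \eqref{anaMCF}.

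Next, for each $\lambda>0$ define $f_\lambda(x,t)=\lambda^{-1}f(\lambda x,\lambda^2 t)$. By the scaling invariance \eqref{scalingmean}, $f_\lambda$ is again a solution of \eqref{meanh}, and the $X_\infty$-norm is scale invariant, so $\|f_\lambda\|_{X_\infty}=\|f\|_{X_\infty}\le C\varepsilon$. Its initial datum is $f_{0,\lambda}(x)=\lambda^{-1}f_0(\lambda x)$, which by the self-similarity hypothesis \eqref{self} equals $f_0$. Consequently $f_\lambda$ and $f$ are two solutions in $B^{X_\infty}(0,C\varepsilon)$ with the same initial data, so the uniqueness part of Theorem \ref{mcf} yields $f_\lambda=f$ for every $\lambda>0$, i.e.\ $f(x,t)=\lambda^{-1}f(\lambda x,\lambda^2 t)$. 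Setting $\lambda=t^{-1/2}$ and $\xi(y):=f(y,1)$ gives the representation $f(x,t)=\sqrt{t}\,\xi(x/\sqrt{t})$; analyticity of $\xi$ on $\R^n$ is inherited from the joint real-analyticity of $f$ in $(x,t)$ at $t=1$ proved in Theorem \ref{mcf}.

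Finally, it remains to derive the elliptic equation for $\xi$. I would simply differentiate the ansatz $f(x,t)=\sqrt{t}\,\xi(x/\sqrt{t})$: with $y=x/\sqrt{t}$ one computes
\[
\partial_t f(x,t)=\tfrac{1}{2\sqrt{t}}\bigl(\xi(y)-y\cdot\nabla\xi(y)\bigr),\qquad \nabla_i f(x,t)=\nabla_i\xi(y),\qquad \nabla^2_{ij} f(x,t)=\tfrac{1}{\sqrt{t}}\nabla^2_{ij}\xi(y).
\]
Substituting into $\partial_t f=g^{ij}(\nabla f)\,\nabla^2_{ij}f$, multiplying by $\sqrt{t}$, and observing that $g^{ij}(\nabla f(x,t))=h^{ij}(y)$ with $h_{ij}=\delta_{ij}+\langle\nabla_i\xi,\nabla_j\xi\rangle$ because $\nabla f$ depends only on $y$, yields exactly
\[
h^{ij}(y)\,\nabla^2_{ij}\xi(y)+\tfrac{1}{2}\bigl(y\cdot\nabla\xi(y)-\xi(y)\bigr)=0.
\]

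The conceptually delicate point, and the one I would be most careful about, is the application of uniqueness: one must verify that $f_\lambda\in B^{X_\infty}(0,C\varepsilon)$ (which follows from the scale invariance of the $X_\infty$-norm) so that the uniqueness clause of Theorem \ref{mcf} actually applies to both $f$ and $f_\lambda$. Everything else is routine: the $X_\infty$-bound and the analyticity estimate \eqref{anaMCF} are direct consequences of the corresponding statements in Theorem \ref{mcf}, and the derivation of the self-similar profile equation is a straightforward chain-rule calculation.
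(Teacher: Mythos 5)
Your proposal is correct and follows exactly the route the paper takes: the paper proves this corollary by "arguing as in the case of the Willmore flow," i.e.\ combining Theorem \ref{mcf} with the scaling invariance \eqref{scalingmean}, the self-similarity of $f_0$, and uniqueness in the ball $B^{X_\infty}(0,C\eps)$ to get $f(x,t)=\lambda^{-1}f(\lambda x,\lambda^2 t)$, then setting $\lambda=t^{-1/2}$ and deriving the profile equation by the chain rule, just as you do. Your check that $\|f_\lambda\|_{X_\infty}=\|f\|_{X_\infty}$ (so uniqueness applies) and your computation of the elliptic system are both accurate.
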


\section{Harmonic map flow}
\setcounter{equation}{0}
In this section we study the harmonic map flow for maps from the euclidean space into a smooth and compact Riemannian manifold $N$, which we assume to be isometrically embedded into some euclidean space $\R^l$. For simplicity we assume first that $N$ is the round sphere $S^{l-1}\subset \R^l$ and later on we show how to extend the results to the general case. A map $u:\R^n\times [0,T) \rightarrow S^{l-1}$ is a solution of the harmonic map flow with initial condition $u_0:\R^n\rightarrow S^{l-1}$ if
\begin{align}
(\partial_t-\Delta)u&=u|\nabla u|^2\ \ \text{in}\ \ \R^n\times (0,T)\ \ \text{and}\nonumber\\
u(\cdot,0)&=u_0. \label{hflow}
\end{align}
Our main goal in this subsection is to prove a local existence result for solutions of \eqref{hflow} in the case where $u_0$ is a small $L^\infty$-perturbation of an uniformly continuous map. 

We note that the harmonic map flow is invariant under the scaling ($\lambda>0$)
\begin{align}
u_\lambda(x,t)=u(\lambda x,\lambda^2 t)\label{scalingH}
\end{align}
and we define for every $0<T\le \infty$ the function spaces 
\begin{align*}
X_T=&\{u |\,\ ||u||_X=\sup_{0<t<T}(||u(t)||_{L^\infty(\R^n)}+t^{\frac{1}{2}}||\nabla u(t)||_{L^\infty(\R^n)})\\
&+\sup_{x\in \R^n}\sup_{0<R^2<T}R^{-\frac{n}{2}} ||\nabla u||_{L^2(B_R(x)\times (0,R^2))}<\infty \}\ \ \text{and}\\
Y_T=&\{f| \,\ ||f||_{Y_T}=\sup_{0<t<T} t|| f(t)||_{L^\infty(\R^n)}\\
&+\sup_{x\in \R^n}\sup_{0<R^2<T}R^{-n} ||f||_{L^1(B_R(x)\times (0,R^2))}<\infty\}.
\end{align*}
Similar function spaces have been used in \cite{koch01} to construct a solution to the Navier-Stokes equation. 

Now we can formulate our main Theorem of this subsection.
\begin{theorem}\label{localharmonic}
There exists $\eps_0=\eps_0(n)>0$ such that for every uniformly continuous map $w:\R^n\rightarrow S^{l-1}$ and every map $u_0:\R^n\rightarrow S^{l-1}$ satisfying $||u_0-w||_{L^\infty(\R^n)}<\varepsilon_0$ there exists $\delta=\delta(\eps_0,w)>0$ and an analytic solution $u\in \varphi_\delta+X_{\delta^2}$ of \eqref{hflow}. Here $\varphi_\delta =\int_{\R^n}\Phi(\cdot-y,\delta^2)w(y) dy$. 
\end{theorem}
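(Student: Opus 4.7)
The plan is to seek the solution in the form $u = \varphi + v$, where $\varphi$ is a smooth heat evolution of $w$ backed up in time by $\delta^2$, and $v$ is a small perturbation to be produced by a Duhamel-type fixed-point argument in the ball $X_{\delta^2}^\rho = \{v \in X_{\delta^2} : \|v\|_{X_{\delta^2}} \le \rho\}$ for some small $\rho$. More precisely I would interpret $\varphi_\delta$ as the space-time function $\varphi(x,t) = \int_{\R^n} \Phi(x-y, t+\delta^2) w(y)\,dy$, which solves the heat equation on $\R^n \times (-\delta^2,\infty)$ and coincides at $t=0$ with $\int \Phi(\cdot - y, \delta^2) w(y)\,dy$. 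The first step is to quantify, in terms of the modulus of continuity $\omega$ of $w$, how well $\varphi$ approximates $w$: using $\int \nabla_x\Phi(z,s)\,dz = 0$, one writes
\[
\nabla\varphi(x,t) = \int_{\R^n} \nabla_x\Phi(x-y,t+\delta^2)\bigl(w(y) - w(x)\bigr)\,dy,
\]
which after scaling gives $(t+\delta^2)^{1/2}\|\nabla\varphi(\cdot,t)\|_{L^\infty} \le \eta(\delta)$ for $0 < t < \delta^2$, with $\eta(\delta)\to 0$ as $\delta\to 0$; a similar argument shows $\|\varphi(\cdot,t) - w\|_{L^\infty} \le \eta(\delta)$. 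I would then pick $\delta = \delta(\varepsilon_0, w)$ so that $\eta(\delta) < \varepsilon_0$.

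The equation for $v = u - \varphi$ becomes $(\partial_t - \Delta)v = (\varphi+v)|\nabla(\varphi+v)|^2$ with initial data $v(\cdot,0) = u_0 - \varphi(\cdot,0)$. By the triangle inequality and Step~1 the initial data satisfies $\|v(\cdot,0)\|_{L^\infty} \le \|u_0 - w\|_{L^\infty} + \eta(\delta) < 2\varepsilon_0$, so Lemma~\ref{homog2nd} gives $\|e^{t\Delta}v(\cdot,0)\|_{X_{\delta^2}} \le C\varepsilon_0$. Writing the integral equation
\[
T(v)(x,t) = e^{t\Delta}v(\cdot,0)(x) + \int_0^t e^{(t-s)\Delta}\bigl[(\varphi+v)|\nabla(\varphi+v)|^2\bigr](x,s)\,ds,
\]
I would then check the bilinear estimate that the nonlinearity maps $X_{\delta^2}^\rho$ into $Y_{\delta^2}$ (the natural source space defined exactly as in Section~4). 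Using $\|\varphi+v\|_{L^\infty} \le 1 + \|v\|_{X_{\delta^2}}$ and $|\nabla(\varphi+v)| \le |\nabla\varphi|+|\nabla v|$, one bounds the pointwise part of the $Y_{\delta^2}$-norm by
\[
t\|(\varphi+v)|\nabla(\varphi+v)|^2\|_{L^\infty} \le (1+\|v\|_{X_{\delta^2}})\bigl(t^{1/2}\|\nabla\varphi\|_{L^\infty} + t^{1/2}\|\nabla v\|_{L^\infty}\bigr)^2 \le C(1+\rho)(\eta(\delta)+\rho)^2,
\]
and the Carleson part analogously via $R^{-n}\|u|\nabla u|^2\|_{L^1} \le \|u\|_{L^\infty}(R^{-n/2}\|\nabla u\|_{L^2})^2$, small by the corresponding estimate on $\nabla\varphi$ on cylinders with $R^2 \le \delta^2$.

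The analogue of Lemma~\ref{firsti} for the second-order heat kernel (the argument is identical, replacing biharmonic exponents with the ones used in Section~4) then gives $\|T(v)\|_{X_{\delta^2}} \le C\varepsilon_0 + C(1+\rho)(\eta(\delta)+\rho)^2$, with a matching Lipschitz estimate. Choosing $\varepsilon_0$ small and $\rho = 2C\varepsilon_0$, and requiring $\eta(\delta)<\varepsilon_0$, makes $T$ a contraction on $X_{\delta^2}^\rho$, producing a unique fixed point $v$ by the Banach fixed point theorem. Analyticity of $u$ in $u_0$ and in the space-time variables then follows by the implicit function theorem argument of the proof of Theorem~\ref{main}, since the operator $G(u_0,v) = v - T(v)$ is analytic on the appropriate balls with $D_v G(0,0) = \mathrm{id}$.

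The main technical obstacle is the first step: the factor $\nabla\varphi$ is not small in $L^\infty$ (indeed $\|\nabla\varphi(\cdot,t)\|_{L^\infty} \lesssim (t+\delta^2)^{-1/2}$ blows up as $\delta \to 0$), so the only way to absorb it into the contraction is to gain a small factor of $\eta(\delta)$ from uniform continuity in every scale-invariant norm that appears in $X_{\delta^2}$. This is precisely what forces $\delta$ to depend on $w$, not just on $\varepsilon_0$, and makes the argument specific to uniformly continuous $w$ (generalizing the $VMO^{-1}$ localization idea of~\cite{koch01}). Once that smallness is established, the nonlinear machinery is completely parallel to the global constructions in the previous sections.
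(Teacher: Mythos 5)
Your argument is correct and follows the same overall strategy as the paper (mollified background from the heat evolution of $w$, smallness at scales below $\delta$ from uniform continuity, Koch--Tataru type spaces, Banach fixed point plus the implicit function theorem for analyticity), but with a slightly different decomposition. The paper subtracts the \emph{time-independent} slice $\varphi_\delta(x)=\varphi(x,\delta^2)$, so the perturbation $v=u-\varphi_\delta$ satisfies \eqref{phflow} with the extra forcing term $-\Delta\varphi_\delta$ and cross terms in $\nabla\varphi_\delta$; these are absorbed using Lemma \ref{varphi}, in particular the estimate $\delta^2\|\nabla^2\varphi_\delta\|_{L^\infty}\le c\eps$, which is exactly what makes $t\|\Delta\varphi_\delta\|_{L^\infty}\le c\eps$ on $(0,\delta^2)$. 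You instead subtract the time-shifted caloric extension $\varphi(x,t+\delta^2)$, which removes the forcing term altogether and leaves only the nonlinearity $(\varphi+v)|\nabla(\varphi+v)|^2$, at the price of needing the (easy, but worth stating) observation that $\varphi(\cdot,\cdot+\delta^2)-\varphi_\delta\in X_{\delta^2}$, so that your fixed point really yields $u\in\varphi_\delta+X_{\delta^2}$ as claimed in the theorem; your bounds $(t+\delta^2)^{1/2}\|\nabla\varphi\|_{L^\infty}\le\eta(\delta)$ and the Carleson bound $R^{-n/2}\|\nabla\varphi\|_{L^2(B_R\times(0,R^2))}\lesssim R\,\eta(\delta)/\delta$ give exactly this. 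Two small inaccuracies in bookkeeping: the source space you need is the one of Section 6 (with $\sup_t t\|f(t)\|_{L^\infty}$ and $R^{-n}\|f\|_{L^1}$), not the $Y$-space of Section 4 --- your displayed estimates in fact match the Section 6 space --- and the linear estimate you invoke is the analogue of Lemma \ref{firsthaa} (the paper proves the $t^{1/2}\|\nabla v\|_{L^\infty}$ part by the argument of \cite{koch01}), Lemma \ref{firsti} already being the second-order case in different norms. Neither affects the validity of the argument; what your variant buys is a cleaner $v$-equation (no $-\Delta\varphi_\delta$ and hence no need for the second-derivative bound in Lemma \ref{varphi}), while the paper's frozen background gives the stated form of the solution directly.
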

As a corollary of this Theorem and its proof we get 
\begin{corollary}\label{globalh}
There exists $\varepsilon_0>0$ such that for all $u_0:\R^n\rightarrow S^{l-1}$ satisfying $||u_0-P||_{L^\infty(\R^n)}<\eps_0$, where $P\in S^{l-1}$ is some arbitrary point, there exists a global analytic solution $u\in P+ X_{\infty}$ of \eqref{hflow}.
\end{corollary}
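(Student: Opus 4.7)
The plan is to run the proof of Theorem \ref{localharmonic} in the special case $w \equiv P$ and observe that the local time scale $\delta$ may now be taken equal to $+\infty$. The key identity is
\[
\varphi_\delta(x) = \int_{\R^n} \Phi(x-y, \delta^2)\, P\, dy = P
\]
for every $\delta > 0$, since the heat kernel has unit mass and $P$ is constant. In the general local theorem the parameter $\delta$ must be chosen small in order to force the defect $w - \varphi_\delta$ to be small, quantifying the uniform continuity of $w$; for the constant target map $w \equiv P$ this defect vanishes identically and no restriction on $\delta$ is needed.

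Concretely, I would set $u = P + v$, which turns \eqref{hflow} into
\[
(\partial_t - \Delta) v = (P + v)|\nabla v|^2, \qquad v(\cdot, 0) = u_0 - P,
\]
so that $v_0 := u_0 - P$ satisfies $\|v_0\|_{L^\infty(\R^n)} < \varepsilon_0$. The homogeneous part of the Duhamel representation is controlled in $X_\infty$ by $c\|v_0\|_{L^\infty}$ via Lemma \ref{homog2nd}. For the inhomogeneous part, the required nonlinear estimate has the form
\[
\Bigl\|\textstyle\int_0^t\!\int_{\R^n} \Phi(\cdot-y,t-s)\,(P+v)|\nabla v|^2(y,s)\, dy\, ds\Bigr\|_{X_\infty} \le c\,(1+\|v\|_{X_\infty})\,\|v\|_{X_\infty}^2,
\]
together with its Lipschitz counterpart. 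These follow from exactly the same heat-kernel splitting used in the proof of Lemma \ref{firsti}: a near-diagonal piece bounded by H\"older's inequality using the $L^p$ norms of $\Phi$ (finite thanks to \eqref{kernel2b}), and a tail piece bounded by the exponential decay \eqref{phi}. The two requisite bounds are the pointwise weighted bound $t\,\|(P+v)|\nabla v|^2\|_{L^\infty} \le (1+\|v\|_{X_\infty})\|v\|_{X_\infty}^2$, and the Carleson-type $L^1$ bound in $Y_\infty$ controlled by $\|v\|_{L^\infty}$ times the squared Carleson norm of $\nabla v$ entering the definition of $X_\infty$.

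A standard Banach fixed point argument in the ball $\{v \in X_\infty : \|v\|_{X_\infty} \le C\varepsilon_0\}$ for $\varepsilon_0$ sufficiently small then yields a unique solution $v$ with $\|v\|_{X_\infty} \le C\|v_0\|_{L^\infty}$, and analyticity in $x$, $t$ and $u_0$ follows as in the proof of Theorem \ref{main} via the analytic implicit function theorem combined with the auxiliary parameters $(a,\tau)$ encoding translation and time rescaling. The main conceptual obstacle is to verify that the $X_\infty$ and $Y_\infty$ norms are genuinely scale-invariant under \eqref{scalingH}, so that the estimates close globally in time rather than only up to the scale $\delta^2$ of Theorem \ref{localharmonic}; this is transparent from the definitions, and once noted the argument is essentially a replay of the previous fixed-point constructions in this paper applied to the reference point $P$ rather than to a time-dependent background $\varphi_\delta$.
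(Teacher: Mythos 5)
Your proposal is correct and is essentially the paper's own argument: the corollary is obtained by specializing the local construction to the constant map $w\equiv P$, for which $\varphi_\delta\equiv P$ and the oscillation parameter $\eps$ vanishes, so the estimates of Lemmas \ref{firstha} and \ref{firsthaa} hold with $T=\infty$ and the fixed point plus implicit function theorem argument of Theorem \ref{main} yields the global analytic solution $u=P+v\in P+X_\infty$.
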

We remark that the harmonic map flow for smooth initial maps whose image lies in a geodesic ball has previously been studied by Jost \cite{jost81}.

In the following we let $w:\R^n\rightarrow S^{l-1}$ be a fixed uniformly continuous map and we let $\varphi:\R^n \times [0,\infty)\rightarrow \R^l$ be the unique solution of 
\begin{align}
(\partial_t-\Delta) \varphi&=0\ \ \ \text{in}\ \ \R^n\times (0,\infty)\ \ \ \text{and} \nonumber \\
\varphi(\cdot,0)&=w.\label{var}
\end{align} 
Since $w$ is uniformly continuous we know that for every $\eps>0$ there exists $\delta>0$ such that for every $x\in \R^n$ we have $\text{osc}_{B_\delta(x)}w \le \eps$ and therefore we get for all $x,y\in \R^n$  
\begin{align}
|w(x)-w(y)|\le \eps(1+\frac{|x-y|}{\delta}). \label{osc}
\end{align}
Now we have the following Lemma.
\begin{lemma}\label{varphi}
Let $w$ and $\varphi$ be as above. Then we have
\begin{align}
||\varphi_\delta||_{L^\infty(\R^n)}\le& c\ \ \ \text{and} \label{estvarphi1}\\ 
||\varphi_\delta-w||_{L^\infty(\R^n)}+\delta ||\nabla \varphi_\delta||_{L^\infty(\R^n)}+\delta^2||\nabla^2 \varphi_\delta||_{L^\infty(\R^n)} \le& c\eps, \label{estvarphi2}
\end{align}
where $\varphi_\delta=\varphi(\cdot,\delta^2)$.
\end{lemma}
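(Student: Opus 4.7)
The plan is to exploit two structural facts: that $w$ takes values on the unit sphere (so $|w|=1$ pointwise), and that the heat semigroup preserves constants, so derivatives of the convolution hit $w(y)-w(x)$ rather than $w(y)$ itself. The oscillation estimate \eqref{osc} then lets us absorb a factor of $\eps$ outside the integral.

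For \eqref{estvarphi1} the estimate is immediate: since $\int \Phi(\cdot-y,\delta^2)\,dy=1$ and $|w(y)|=1$, the triangle inequality under the integral sign gives $|\varphi_\delta(x)|\le 1$.

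For \eqref{estvarphi2}, the key observation is that for $k=0$,
\[
\varphi_\delta(x)-w(x)=\int_{\R^n} \Phi(x-y,\delta^2)\bigl(w(y)-w(x)\bigr)\,dy,
\]
and for $k\ge 1$, since $\int \nabla^k_x\Phi(x-y,\delta^2)\,dy=\nabla^k_x 1=0$,
\[
\nabla^k\varphi_\delta(x)=\int_{\R^n}\nabla^k_x\Phi(x-y,\delta^2)\bigl(w(y)-w(x)\bigr)\,dy,\qquad k=1,2.
\]
Inserting \eqref{osc} with $z=x-y$ yields
\[
|\nabla^k \varphi_\delta(x)|\le \eps \int_{\R^n}|\nabla^k\Phi(z,\delta^2)|\Bigl(1+\tfrac{|z|}{\delta}\Bigr)dz,
\]
and the analogous bound holds with $|\varphi_\delta(x)-w(x)|$ in place of $|\nabla^k\varphi_\delta(x)|$ when $k=0$.

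The remaining step is to evaluate the moment integrals. By the scaling $\Phi(z,\delta^2)=\delta^{-n}\Phi(z/\delta,1)$ (so $\nabla^k\Phi(z,\delta^2)=\delta^{-n-k}(\nabla^k\Phi)(z/\delta,1)$) and the Gaussian decay of $\Phi(\cdot,1)$ and its derivatives, the change of variables $z=\delta\xi$ gives
\[
\int_{\R^n}|\nabla^k\Phi(z,\delta^2)|\,|z|^j\,dz \;=\; \delta^{j-k}\int_{\R^n}|\nabla^k\Phi(\xi,1)|\,|\xi|^j\,d\xi \;\le\; c\,\delta^{j-k}
\]
for $j=0,1$ and $k=0,1,2$; the underlying pointwise bound is \eqref{kernel2b} together with the explicit Gaussian tail of $\Phi$. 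Combining these with the display above produces
\[
|\varphi_\delta(x)-w(x)|\le c\eps,\qquad |\nabla\varphi_\delta(x)|\le c\eps\,\delta^{-1},\qquad |\nabla^2\varphi_\delta(x)|\le c\eps\,\delta^{-2},
\]
uniformly in $x\in\R^n$, which is \eqref{estvarphi2}.

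There is no serious obstacle: the only point requiring a little care is checking that one may legitimately subtract $w(x)$ inside each of the three integrals, which in the derivative cases uses that $\nabla^k$ annihilates the constant $\int\Phi(x-y,\delta^2)dy=1$, and in the $k=0$ case uses the same normalization to rewrite $w(x)$ as $\int\Phi(x-y,\delta^2)w(x)\,dy$.
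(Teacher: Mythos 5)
Your proof is correct and follows essentially the same argument as the paper: subtract $w(x)$ inside the convolution (using $\int\Phi=1$ and $\int\nabla^k\Phi=0$), apply the oscillation bound \eqref{osc}, and evaluate the resulting Gaussian moment integrals by scaling, with the bound \eqref{estvarphi1} coming from positivity of the kernel and $|w|=1$. The paper merely states the $\nabla\varphi_\delta$ case explicitly and calls the others similar, so you have just spelled out the same computation in full.
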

\begin{proof}
\eqref{estvarphi1} follows from Lemma \ref{homog2nd}. For the second term in \eqref{estvarphi2} we note that for every $x\in \R^n$ we have
\begin{align*}
|\nabla \varphi_\delta(x)|=& |\int_{\R^n}\nabla \Phi(x-y,\delta^2)(w(y)-w(x))dy|\\
\le& c\eps \delta^{-n}\int_{\R^n}\frac{|x-y|}{\delta^{2}}e^{\frac{-|x-y|^2}{4\delta^2}}(1+\frac{|x-y|}{\delta})dy\\
\le& c\eps \delta^{-1},
\end{align*}
where we used \eqref{osc} in the first estimate. The first and third term in \eqref{estvarphi2} are estimated similarly.
\end{proof}

Next we assume that $u$ is a solution of \eqref{hflow} and we let $v(x,t)=u(x,t)-\varphi_\delta(x)$. From this definition it follows that $v$ is a solution of the system
\begin{align} 
(\partial_t-\Delta)v=& v|\nabla v|^2+2v\langle \nabla v, \nabla \varphi_\delta \rangle +\varphi_\delta|\nabla v|^2+v|\nabla \varphi_\delta|^2\nonumber \\
&+2\varphi_\delta\langle \nabla v,\nabla \varphi_\delta \rangle +\varphi_\delta |\nabla \varphi_\delta|^2-\Delta \varphi_\delta \nonumber \\
=:& H[v,\varphi_\delta],\nonumber\\ 
v(\cdot,0)=v_0=& u_0-\varphi_\delta. \label{phflow}
\end{align}
By \eqref{estvarphi2} we get
\begin{align*}
||v_0||_{L^\infty(\R^n)}\le ||u_0-w||_{L^\infty(\R^n)}+c\eps
\end{align*}
and hence we see that Theorem \ref{localharmonic} will be a consequence of the next Proposition if we choose $\eps$ small enough.
\begin{proposition}\label{v}
There exists $\eps_0=\eps_0(n)>0$ such that for all $v_0:\R^n\rightarrow \R^{l}$ satisfying $||v_0||_{L^\infty(\R^n)}<\varepsilon_0$ there exists $\delta=\delta(\eps_0,w)>0$ and a unique and analytic solution $v\in X_{\delta^2}$ of \eqref{phflow}.
\end{proposition}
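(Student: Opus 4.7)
The plan is to set up a Banach fixed point argument for the integral equation
\[
v = S v_0 + V H[v,\varphi_\delta]
\]
in a small ball of $X_{\delta^2}$, where $S$ denotes the heat semigroup and $V$ the Duhamel operator associated to $\partial_t-\Delta$. The free piece $S v_0$ is controlled in $X_{\delta^2}$ by $\|v_0\|_{L^\infty(\R^n)}$ by Lemma \ref{homog2nd}, so the heart of the argument consists of a linear estimate $V:Y_{\delta^2}\to X_{\delta^2}$ and a nonlinear estimate for $H[\,\cdot\,,\varphi_\delta]:X_{\delta^2}\to Y_{\delta^2}$ in which every occurrence of a $\varphi_\delta$-derivative is absorbed as a smallness factor in $\varepsilon$.

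For the linear Duhamel estimate I would mimic Lemmas \ref{xy} and \ref{firsti}. By translation invariance and the scaling \eqref{scalingH}, it suffices to bound $|v(0,1)|+|\nabla v(0,1)|+\|\nabla v\|_{L^2(B_1(0)\times(0,1))}$. Splitting the Duhamel integral into the parabolic cylinder $Q=B_1(0)\times[1/2,1)$ and its complement, the near piece is controlled by H\"older's inequality against $\|\Phi\|_{L^{(n+4)/(n+2)}(Q')}$ and $\|\nabla\Phi\|_{L^{(n+4)/(n+3)}(Q')}$ (finite by \eqref{kernel2b}) paired with the weighted $L^\infty$ and Carleson $L^1$ pieces of the $Y_T$ norm on $Q$, while the far piece uses the pointwise decay \eqref{phi} and a dyadic-annulus geometric sum exactly as in the proof of Lemma \ref{firsti}. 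The Carleson $L^2$ bound on $\nabla v$ follows from a Caccioppoli-type energy inequality after testing the equation with $\eta^2 v$, as in the proof of Lemma \ref{homog2nd}.

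The nonlinear estimate is where Lemma \ref{varphi} enters. The bounds $\|\varphi_\delta\|_{L^\infty}\le c$, $\delta\|\nabla\varphi_\delta\|_{L^\infty}\le c\varepsilon$, $\delta^2\|\nabla^2\varphi_\delta\|_{L^\infty}\le c\varepsilon$, combined with the restrictions $0<t\le\delta^2$ and $R\le\delta$ built into $Y_{\delta^2}$, convert each factor of $\nabla\varphi_\delta$ or $\Delta\varphi_\delta$ into an $\varepsilon$-small contribution once paired against the natural weights $t^{1/2}$ and $t$ (respectively $R$ and $R^2$). Going through the summands of $H[v,\varphi_\delta]$ one by one and using H\"older in space-time against the $L^\infty$, $t^{1/2}\|\nabla\cdot\|_{L^\infty}$ and Carleson $L^2$ pieces of the $X_{\delta^2}$ norm, one obtains
\[
\|H[v,\varphi_\delta]\|_{Y_{\delta^2}}\le c\|v\|_{X_{\delta^2}}^3+c\|v\|_{X_{\delta^2}}^2+c\varepsilon\|v\|_{X_{\delta^2}}+c\varepsilon,
\]
with the pure-$v$ terms $v|\nabla v|^2$ and $\varphi_\delta|\nabla v|^2$ producing the cubic and quadratic pieces, mixed terms such as $v\langle\nabla v,\nabla\varphi_\delta\rangle$ producing the $\varepsilon$-linear piece, and the $v$-independent forcing $\varphi_\delta|\nabla\varphi_\delta|^2-\Delta\varphi_\delta$ producing the standalone $c\varepsilon$. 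An essentially identical computation furnishes a Lipschitz bound with prefactor $c(\varepsilon+\|v_1\|_{X_{\delta^2}}+\|v_2\|_{X_{\delta^2}})$.

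Combining these gives $\|F(v)\|_{X_{\delta^2}}\le c\varepsilon_0+c\varepsilon+c\|v\|_{X_{\delta^2}}^2+c\varepsilon\|v\|_{X_{\delta^2}}$ for $F(v)=Sv_0+VH[v,\varphi_\delta]$. Choosing $\varepsilon_0$ absolutely small and then $\varepsilon$ small (the latter determining $\delta=\delta(\varepsilon_0,w)$ through the modulus of continuity of $w$ via \eqref{osc}), the map $F$ sends the ball of radius $R=c(\varepsilon_0+\varepsilon)$ in $X_{\delta^2}$ into itself and is a strict contraction, so Banach's theorem yields the unique fixed point $v\in X_{\delta^2}$ in this ball. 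Analyticity of $v$ with respect to $v_0$ and in the space-time variables follows exactly as in the proof of Theorem \ref{main}: apply the analytic implicit function theorem to $G(v_0,v)=v-Sv_0-VH[v,\varphi_\delta]$ at $(0,0)$, and then use the translation/dilation trick $(x,t)\mapsto(x-at,\tau t)$ to transfer parameter analyticity to analyticity in $(x,t)$ at every interior point. The main obstacle is the bookkeeping in the nonlinear estimate: ensuring that after assigning each derivative of $v$ or $\varphi_\delta$ to its correct weight, every summand of $H$ lands in $Y_{\delta^2}$ with either an $\varepsilon$- or $\|v\|_{X_{\delta^2}}$-small prefactor.
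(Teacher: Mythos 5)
Your proposal follows essentially the same route as the paper: the paper establishes exactly your nonlinear estimate (Lemma \ref{firstha}, via Lemma \ref{varphi}, with the $\eps$-smallness carried by the $\varphi_\delta$-derivatives) and your linear Duhamel estimate (Lemma \ref{firsthaa}, via scaling/translation, the near/far splitting and a Caccioppoli argument), and then concludes by the Banach fixed point theorem together with the implicit-function/dilation argument of Theorem \ref{main} for uniqueness and analyticity. The only cosmetic difference is that your near-field H\"older pairing and dyadic-in-time summation are slightly more elaborate than necessary here, since the $t\Vert\cdot\Vert_{L^\infty}$ and Carleson $L^1$ components of $Y_{\delta^2}$ control the corresponding pieces directly; this does not affect correctness.
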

In order to prove this Proposition we need the following two Lemmas. 
\begin{lemma}\label{firstha}
Let $\varphi_\delta =\int_{\R^n}\Phi(\cdot-y,\delta^2)w(y) dy$ and let $v\in X_{\delta^2}$. Then we have that $H[v,\varphi_\delta] \in Y_{\delta^2}$ with
\begin{align}
||H[v,\varphi_\delta]||_{Y_{\delta^2}} \le c \Big(\eps+||v||_{X_{\delta^2}}+||v||^2_{X_{\delta^2}}\Big)||v||_{X_{\delta^2}}+c\eps.\label{firstha1}
\end{align}
Moreover there exists $\eps_1>0$ and $q<1$ such that for all $\eps<\eps_1$ and all $v_1,v_2\in X_{\delta^2}^{\eps_1}=\{v\in  X_{\delta^2}|\,\ ||v||_{X_{\delta^2}}<\eps_1\}$ we have
\begin{align}
||H[v_1,\varphi_\delta]-H[v_2,\varphi_\delta]||_{Y_{\delta^2}}\le q||v_1-v_2||_{X_{\delta^2}}. \label{firstha2}
\end{align}  
\end{lemma}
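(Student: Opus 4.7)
The plan is to expand $H[v,\varphi_\delta]$ into its seven constituent terms and estimate each of them in both components of the $Y_{\delta^2}$ norm, using the bounds from Lemma \ref{varphi} for $\varphi_\delta$ and the definition of the $X_{\delta^2}$ norm for $v$. The key structural observation is that although $\|\varphi_\delta\|_{L^\infty}\le c$ is \emph{not} small, each spatial derivative of $\varphi_\delta$ gains a factor of $\eps/\delta$, and the constraint $t<\delta^2$ (respectively $R<\delta$ in the Carleson part) lets us absorb all the extra powers of $\delta$ that appear.

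First I would treat the $L^\infty$ part of $\|\cdot\|_{Y_{\delta^2}}$. For the cubic term $v|\nabla v|^2$ one has
\[ t\,\|v|\nabla v|^2\|_{L^\infty}\le \|v\|_{L^\infty}\cdot (t^{1/2}\|\nabla v\|_{L^\infty})^2 \le \|v\|_{X_{\delta^2}}^3. \]
The mixed quadratic terms in $v$ and the linear-in-$v$ terms produce powers $(t^{1/2}/\delta)^k$ with $k\ge 1$, each bounded by $1$, together with a factor $\eps$ coming from $\delta\|\nabla\varphi_\delta\|_{L^\infty}\le c\eps$ or $\delta^2\|\nabla^2\varphi_\delta\|_{L^\infty}\le c\eps$. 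The exception is $\varphi_\delta|\nabla v|^2$: here only $\|\varphi_\delta\|_{L^\infty}\le c$ is available, yielding the unsmall contribution $c\|v\|_{X_{\delta^2}}^2$ that appears as the middle summand in \eqref{firstha1}. The $v$-independent tail $\varphi_\delta|\nabla\varphi_\delta|^2-\Delta\varphi_\delta$ is bounded by $c\eps/\delta^2$ pointwise, giving $t\cdot c\eps/\delta^2\le c\eps$, which produces the trailing $c\eps$ in \eqref{firstha1}.

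For the Carleson piece $R^{-n}\|\cdot\|_{L^1(B_R(x)\times(0,R^2))}$ with $R<\delta$, I would pull $\|v\|_{L^\infty}$ and the $\varphi_\delta$-derivatives out in $L^\infty$ and then use either the scale-invariant $L^2$ bound $\|\nabla v\|_{L^2(B_R\times(0,R^2))}\le R^{n/2}\|v\|_{X_{\delta^2}}$ (for the terms quadratic in $\nabla v$), or H\"older's inequality on a cylinder of measure $R^{n+2}$ to convert that $L^2$ bound to an $L^1$ bound $\|\nabla v\|_{L^1}\le R^{n+1}\|v\|_{X_{\delta^2}}$ (for the terms linear in $\nabla v$). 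Each time the scaling works out: the derivatives of $\varphi_\delta$ bring factors $(\eps/\delta)^k$ and the cylinder volume contributes $R^{n+2k}$ or $R^{n+k}$, so one is left with benign powers of $R/\delta\le 1$ times the asserted combinations of $\eps$ and $\|v\|_{X_{\delta^2}}$.

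For the Lipschitz estimate \eqref{firstha2}, I would set $w=v_1-v_2$ and expand each term using the elementary identities
\[ v_1 |\nabla v_1|^2-v_2|\nabla v_2|^2 = w|\nabla v_1|^2 + v_2\langle\nabla v_1+\nabla v_2,\nabla w\rangle, \]
and analogous splittings for the quadratic and linear terms in $v$; the $v$-independent terms cancel outright. Each summand is then of the same type as before and is estimated as above, yielding
\[ \|H[v_1,\varphi_\delta]-H[v_2,\varphi_\delta]\|_{Y_{\delta^2}}\le c\bigl(\eps+\|v_1\|_{X_{\delta^2}}+\|v_2\|_{X_{\delta^2}}+\|v_1\|_{X_{\delta^2}}^2+\|v_2\|_{X_{\delta^2}}^2\bigr)\|w\|_{X_{\delta^2}}. \]
Choosing $\eps_1$ small so that the parenthesis is $<1/c$ for $v_1,v_2\in X_{\delta^2}^{\eps_1}$ and $\eps<\eps_1$ produces some $q<1$ and proves \eqref{firstha2}. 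The main technical obstacle is really just bookkeeping: the worst term is $\varphi_\delta|\nabla v|^2$, where no smallness of $\varphi_\delta$ itself is available, and it is precisely this term that forces the Lipschitz constant to be a contraction only on a small ball, which is why \eqref{firstha1} contains the quadratic-without-$\eps$ summand $c\|v\|_{X_{\delta^2}}^2$.
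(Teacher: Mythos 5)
Your proposal is correct and follows exactly the route the paper intends: the paper's proof consists of the single remark that the lemma is a direct consequence of the definitions of $X_{\delta^2}$ and $Y_{\delta^2}$, the explicit expression for $H[v,\varphi_\delta]$, and Lemma \ref{varphi}, which is precisely the term-by-term bookkeeping you carry out. Your identification of $\varphi_\delta|\nabla v|^2$ as the term responsible for the non-small quadratic summand, and of the factors $t^{1/2}/\delta\le 1$, $R/\delta\le 1$ absorbing the powers of $\delta$, matches the intended argument.
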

\begin{proof}
This is a direct consequence of the definition of the function spaces $X_{\delta^2}$ and $Y_{\delta^2}$, the explicit expression for $H[v,\varphi_\delta]$ and Lemma \ref{varphi}. 
\end{proof}
\begin{lemma}\label{firsthaa}
Let $H\in Y_T$ for some $0<T\le \infty$. Then every solution $v$ of $(\partial_t-\Delta)v=H$ with $v(\cdot,0)=v_0\in L^\infty(\R^n,\R^l)$ is in $X_T$ and we have the estimate
\begin{align}
||v||_{X_T}\le c(||v_0||_{L^\infty(\R^n)}+||H||_{Y_T}).\label{firsthaa1}
\end{align}
\end{lemma}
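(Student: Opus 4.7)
The plan is to follow the pattern of Lemma \ref{homog2nd}, Lemma \ref{xy}, and most closely Lemma \ref{firsti}. By the linearity of the equation and Lemma \ref{homog2nd} (which already delivers the $X_T$-estimate for the homogeneous part with constant $c\|v_0\|_{L^\infty}$), it suffices to treat the case $v_0 = 0$. Both sides of \eqref{firsthaa1} are invariant under translation and the parabolic scaling \eqref{scalingH}, so I reduce to proving
\[
|v(0,1)| + |\nabla v(0,1)| + \|\nabla v\|_{L^2(B_1(0) \times (0,1))} \le c \|H\|_{Y_T}
\]
for some $T > 1$; it will be convenient below to assume $T \ge 4$.

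For the two pointwise bounds at $(0,1)$, I would write $v$ by Duhamel and split the domain of integration into the near-singularity cylinder $Q = B_1(0) \times [\tfrac12, 1)$, the cylinder $Q' = B_1(0) \times (0, \tfrac12)$, and the far-field region $\R^n \times (0,1) \setminus (B_1(0) \times (0,1))$. On $Q$, the temporal $L^\infty$-component of $\|H\|_{Y_T}$ gives $\|H(s)\|_{L^\infty} \le 2\|H\|_{Y_T}$, which pairs against the local integrability $\int_Q (|\Phi|+|\nabla\Phi|)(-y,1-s)\,dy\,ds \le c$, a direct consequence of \eqref{kernel2b}. On $Q'$ the kernel and its gradient are uniformly bounded, so the local $L^1$-component of $\|H\|_{Y_T}$ with $R=1$ controls the contribution directly. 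For the far-field I would partition $\R^n$ into unit balls $B_1(z)$, $z\in\Z^n$, apply the exponential decay \eqref{phi} of $\Phi$ and $\nabla\Phi$, and sum the convergent series $\sum_z e^{-c_1|z|}\|H\|_{L^1(B_1(z)\times(0,1))} \le c\|H\|_{Y_T}$, precisely as in the estimate of $II$ in Lemma \ref{firsti}.

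For the $L^2$-bound I would run the standard cutoff energy argument used in Lemma \ref{homog2nd}. Taking $\eta \in C_c^\infty(B_2(0))$ with $\eta \equiv 1$ on $B_1(0)$, testing $(\partial_t - \Delta)v = H$ against $\eta^2 v$, integrating by parts, and using Young's inequality yields
\[
\partial_t \int \eta^2 |v|^2 + \int \eta^2 |\nabla v|^2 \le c \int_{B_2(0)} |v|^2 + c \|v\|_{L^\infty} \int \eta^2 |H|.
\]
Integrating in $t$ from $0$ to $1$, using $v_0 = 0$ together with the pointwise bound on $v$ already obtained, and invoking the local $L^1$-component of $\|H\|_{Y_T}$ at scale $R=2$ (this is where $T\ge 4$ is used) closes the estimate.

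I do not expect a genuine analytic obstacle: the proof is essentially bookkeeping. The only point that requires some care is to match the two scales built into the $Y_T$-norm --- local spacetime $L^1$ and pointwise-in-time $L^\infty$ --- to the two types of behavior of the heat kernel in the Duhamel integral. The temporal $L^\infty$-component absorbs the integrable singularity of $\Phi(-y,1-s)$ as $s\uparrow 1$, while the local $L^1$-component handles the regular regime and, combined with the Gaussian decay \eqref{phi}, the far-field sum.
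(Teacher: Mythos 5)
Your proof is correct and follows essentially the same route as the paper: reduce to $v_0=0$ via Lemma \ref{homog2nd}, use translation and scaling invariance, bound $|v(0,1)|$ and $|\nabla v(0,1)|$ by splitting the Duhamel integral against the two components of the $Y_T$-norm (a step the paper simply defers to \cite{koch01}), and get the local $L^2$-bound on $\nabla v$ from the cutoff energy argument. The only cosmetic point is that you cannot simply assume $T\ge 4$ after rescaling (the rescaled time horizon may be barely larger than $1$), but this is harmless since $\|H\|_{L^1(B_2(0)\times(0,1))}$ is controlled by covering $B_2(0)$ with finitely many unit balls and using the $R=1$ cylinders of the $Y_T$-norm.
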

\begin{proof}
Lemma \ref{homog2nd} shows that  without loss of generality we can assume that $v_0=0$. In order to finish the proof of the Lemma we argue as in \cite{koch01}. From the translation and scaling invariance of the estimate \eqref{firsthaa1} it follows that we only have to show that ($T>1$)
\begin{align*}
|v(0,1)|+|\nabla v(0,1)|+||\nabla v||_{L^2(B_1(0)\times (0,1))}\le c||H||_{Y_T}.
\end{align*}

Without loss of generality we can assume that $H$ has compact support in $\R^n\times (0,1)$. The estimate for $|v(0,1)|$ follows directly from the estimate for the heat kernel and the estimate for $|\nabla v(0,1)|$ can be shown as in \cite{koch01}. Finally, in order to get the estimate for $||\nabla v||_{L^2(B_1(0)\times (0,1))}$, we multiply the equation by $\eta^2v$, where $\eta$ is as in the proof of Lemma \ref{homog2nd}, and integrate by parts to get
\begin{align*}
\partial_t\int_{\R^n}\eta^2 |v|^2 +\int_{B_1(0)}|\nabla v|^2 \le c\int_{B_2(0)}(|v|^2+|v||H|)\le c||H||^2_{Y_T}.  
\end{align*}
Integrating over $t$ from $0$ to $1$ yields the desired result.
\end{proof}
Proposition \ref{v} (and therefore also Theorem \ref{localharmonic}) is now a consequence of the previous two Lemmas and a fixed point (respectively implicit function theorem) argument similar to the one used in the proof of Theorem \ref{main}.   

\begin{bem}
The above argument directly extends to the harmonic map flow for maps from $\R^n$ into an arbitrary compact submanifold $N$ of some euclidean space. The regularity of the solution will then also depend on the regularity of $N$ (for example the solution will be analytic if $N$ is analytic).
\end{bem}

\appendix
\numberwithin{equation}{section}
\section{The fundamental solution of the biharmonic heat equation}

The fundamental solution $b(x,t)$ of the biharmonic heat equation
\[ 
u_t + \Delta^2 u = 0 
\]
can be expressed through the Fourier integral 
\[ 
g(x) =  (2\pi)^{-\frac{n}{2}}\int_{\R^n} e^{ikx -|k|^4} dk 
\]
by defining $b(x,t) = t^{-\frac{n}{4}}  g(xt^{-\frac{1}{4}})$. The function $g$ is smooth and radial. 

In the following we want to apply the method of the stationary phase to study the behavior of $g(x)$ as $|x|\rightarrow \infty$. The asymptotics of $g$ are determined by the complex critical points of the complex phase function $p(k)=ik - |k|^4$ which are given by 
\[ 
k_\pm  = (\pm \frac{\sqrt{3}}2 + \frac{1}{2}i) 2^{-\frac{2}{3}} |x|^{\frac{1}{3}}, k_0= -i 2^{-\frac{2}{3}} |x|^{\frac{1}{3}}.  
\] 
The values of the function $q(k)=ik x-k^4$ at the critical points $k_\pm$ of $p$ are 
\[ 
q(k_\pm)=ik_\pm x  - k_{\pm}^4 =  - \frac34  i x k_\pm  =  
- 2^{\frac{1}{3}} \left(  \frac3{16} \pm \frac{3^{\frac{3}{2}}}{16}i \right) |x|^{\frac43}. 
\]
Moreover the Hessian of the phase function is given by 
\[ 
\nabla^2_{ij}p(k)=- 4 k^2 \delta_{ij} - 6 (k_i k_j) .
\] 
To simplify the notation we will restrict ourselves to the case $x = (r,0)$. Next we calculate the eigenvalues of the Hessian $\nabla^2 p$ at the critical points $k_\pm$ to be 
\[
-2 \left(\frac12 \pm \frac{\sqrt{3}}{2}i \right) 2^{-\frac{1}{3}}
|r|^{\frac{2}{3}} 
\]
and 
\[
-3 \left(\frac12 \pm \frac{\sqrt{3}}{2}i \right)
2^{-\frac{1}{3}} |r|^{\frac{2}{3}}, 
\] 
where the second one has multiplicity $n-1$. 
Hence the oscillatory integral $g$ is given as the real part of a complex function $\mathbf g$ which satisfies
\begin{align} 
 \left( \left(\frac{1}{2} + \frac{\sqrt{3}}{2}i \right)^{-\frac12}\right)^n  |x|^{\frac{n}{3}} \exp\left( 2^{\frac{1}{3}}\left(  
    \frac3{16} +\frac{3^{\frac{3}{2}}}{16}i  \right) |x|^{\frac43}\right)  \mathbf g
  \sim \pi^{\frac{n}{2}} + O(|x|^{-1}).\label{asymp}
\end{align}     
This is an asymptotic relation which remains true after differentiating both sides.
 
A rigorous proof of this asymptotic formula can be given as follows. We recall that $x=(r,0)$ with $r >>1$ and we shift the domain of integration to $\R^n 
+ i2^{-\frac{5}{3}}   r^{\frac{1}{3}}e_1 $. We obtain 
\begin{align*}
 g(x) =& (2\pi)^{-\frac{n}{2}} \exp\left( -2^{\frac{1}{3}}   \frac3{16} |x|^{\frac43}\right)                
\int_{\mathbb{R}^n}\Big[  \exp \Big(  i (x\xi -  2^{-\frac{1}{3}}  |\xi|^2 \xi_1 r + 8^{-1} \xi_1 r^3)  \\
&- (|\xi|^2- 3 2^{-\frac{10}{3}} |x|^{\frac{2}{3}})^2 - 2^{-\frac{4}{3}} \xi_2^2 |x|^{\frac{2}{3}} \Big) \Big] 
d\xi. 
\end{align*} 
The asymptotic relation \eqref{asymp} is now obtained by a standard evaluation of the oscillatory integral as in Theorem 7.7.5 of \cite{hoer}.

\end{document}